\documentclass[12pt]{amsart}
\usepackage{amsmath,amssymb,amsthm,amscd}
\usepackage{enumitem}
\usepackage{longtable,array}
\usepackage{tikz-cd}
\usepackage{fullpage}
\usepackage{hyperref}
\hypersetup{%
  colorlinks   = true,	
  urlcolor     = blue,
  linkcolor    = blue,
  citecolor    = red
}
\theoremstyle{plain}
\newtheorem{theorem}{Theorem}[section]
\newtheorem{proposition}[theorem]{Proposition}
\newtheorem{corollary}[theorem]{Corollary}
\newtheorem{lemma}[theorem]{Lemma}
\theoremstyle{definition}

\theoremstyle{remark}
\newtheorem{remark}[theorem]{Remark}
\allowdisplaybreaks
\AtBeginDocument{%
   \def\MR#1{}
}
\author{Adrian Clingher}
\address{Department of Mathematics and Statistics, University of Missouri - St. Louis, St. Louis, MO 63121}
\email{clinghera@umsl.edu}
\author{Andreas Malmendier}
\address{Department of Mathematics \& Statistics, Utah State University, Logan, UT 84322}
\email{andreas.malmendier@usu.edu}
\author{Brandon Williams}
\address{Institute for Mathematics, Heidelberg University, 69120 Heidelberg, Germany}
\email{bwilliams@mathi.uni-heidelberg.de}
\keywords{K3 surfaces, elliptic fibrations, Gritsenko lift, Borcherds products}
\subjclass[2020]{11F37, 11F55, 11E39, 14J15, 14J27, 14J28}
\begin{document}
\title{Some applications of the moduli space of $U\oplus D_8(-1)$-polarized K3 surfaces}
\begin{abstract}
We study K3 surfaces polarized by the lattice $U\oplus D_8(-1)$ through elliptic fibrations and orthogonal modular forms. Using an alternate genus-one fibration and its relative Jacobian, we construct an explicit map from Vinberg's coefficient space to the Hashimoto--Ueda family. We also explain the geometric meaning of certain character forms associated with these moduli spaces and extend the relative-Jacobian construction to the exceptional lattices $U\oplus E_8(-1)\oplus A_k(-1)$ with $k=1, 2$.
\end{abstract}
\maketitle
\section{Introduction}
It has been noted that, in certain cases of K3 surfaces, Weierstrass form coefficients associated to distinguished elliptic fibrations often carry automorphic properties. A classical example is given by the Igusa generators attached to the Jacobians of smooth genus-two curves and their associated Kummer surfaces. For their Shioda--Inose partners, namely the family of $U \oplus E_8(-1) \oplus E_7(-1)$-polarized K3 surfaces, the even generators occur as Weierstrass coefficients, and the odd generator $\chi_{35}$ is the modular Jacobian of the even generators \cite{GritsenkoNikulinIgusa,KawaiThreshold}.  In particular, the cusp form \(\chi_{10}\) cuts out the Humbert component \(\mathcal H_1\), whereas the odd generator \(\chi_{35}\) cuts out the full reflective ramification divisor \(\mathcal H_1\cup\mathcal H_4\).  Its Borcherds product is also the denominator of the corresponding automorphic correction \cite{KawaiIgusa}.
\par In \cite{ClingherMalmendierWilliams2026}, the authors obtained an elliptic normal form for the family of \(U\oplus D_8(-1)\)-polarized K3 surfaces of rank ten.  Its standard Jacobian elliptic fibration was determined to be
\begin{equation}
\label{eq:H+D8-weierstrass_intro}
\begin{split}
y^2=x^3 &+(u^3+uF_4+F_6)x^2 +(u^2G_8+uG_{10}+G_{12})x \\
&+u^5H_8+u^4H_{10}+u^3H_{12}+u^2H_{14}+uH_{16}+H_{18},
\end{split}
\end{equation}
and the displayed coefficients were proven to be generators of the indicated weights for the associated ring of orthogonal modular forms.  On a cover of the moduli space for Equation~\eqref{eq:H+D8-weierstrass_intro} one has
$H_8=-H_4^2$. The purpose of this article is to explain the geometric role of this weight-four character form, analogous to that of $\chi_{35}$ in higher Picard rank.
\par In Vinberg's scroll model, \(H_4\) is obtained by adjoining the square root of the discriminant of two distinguished lines in a special ruling plane.  Choosing its sign selects one of those lines and thereby determines an alternate, divisibility-two genus-one fibration.  Taking the relative Jacobian of that alternate fibration produces the family of \(U\oplus E_8(-1)\)-polarized K3 surfaces that was studied by Hashimoto and Ueda \cite{HashimotoUeda}. We will make every step explicit: the passage from the scroll to the standard Weierstrass equation, the neighbor transformation to the alternate quartic, and the resulting weighted-polynomial map between the Vinberg and Hashimoto--Ueda coefficient systems \cite{Vinberg2018,HashimotoUeda,ClingherMalmendierWilliams2026}.
\par The same character form also has an automorphic interpretation related to Borcherds' Enriques \(\Phi\)-function. The occurrence of the Enriques form can be seen as follows: the invariant lattice of the K3 cover of an Enriques surface is \(U(2)\oplus E_8(-2)\), and its anti-invariant lattice is
\[
T_{\mathrm{Enr}}=U\oplus U(2)\oplus E_8(-2).
\]
In particular, the Borcherds form \(\Phi_{\mathrm{Enr}}\) for the Enriques modular variety is naturally a modular form on the associated type-IV domain. Dualizing and rescaling gives
\begin{equation}
\label{eq:intro-enriques-dual}
T_{\mathrm{Enr}}^\vee(2)
 \cong U^{\oplus2}\oplus D_8(-1) = D_{2,10},
\end{equation}
the period lattice of the \(U\oplus D_8(-1)\)-polarized family. Since \(\mathrm O^+(T_{\mathrm{Enr}})=\mathrm O^+\bigl(T_{\mathrm{Enr}}^\vee(2)\bigr)\), the \emph{full} Enriques modular quotient is the same orthogonal modular variety as the full rank-ten Vinberg quotient, although their marked covers and their geometric families are different.  Borcherds' weight-four Enriques form \(\Phi_{\mathrm{Enr}}\) and the character form \(H_4\) are therefore related; their comparison requires the group, character, divisor, and cusp normalization to be fixed \cite{BorcherdsEnriques,KondoEnriques,YoshikawaII}. We will show that after the necessary data have been fixed, \(H_4\) is proportional to the Borcherds--Enriques form. This connects the neighbor construction to Yoshikawa's equivariant analytic torsion and to Oberdieck's formula expressing the genus-one Enriques curve-counting series as a Fourier expansion of a power of the Enriques form \cite{BorcherdsEnriques,KondoEnriques,YoshikawaI,YoshikawaII,OberdieckEnriques}. In addition, it provides a physical interpretation for the form $H_4$ in terms of F-theory/heterotic string duality.
\par The lattice identity \(U\oplus D_8(-1)\cong U(2)\oplus E_8(-1)\) is the lattice-theoretic source of the alternate fibration. Its relative Jacobian is \(U\oplus E_8(-1)\)-polarized, and the chain rule for modular Jacobians relates the reflective character forms on the two coefficient spaces. This provides a coefficient-level connection between Vinberg's construction and the Hashimoto--Ueda ring of modular forms. Since the rank-ten family is Dolgachev self-mirror, its divisibility-one and divisibility-two fibrations are mirror to the two incident Type~II boundary components, labeled by \(D_8(-1)\) and \(E_8(-1)\). We extract rational elliptic surfaces of type \(\mathrm I_4^*+2\mathrm I_1\) and \(\mathrm{II}^*+2\mathrm I_1\) from the non-K3 coefficient locus and distinguish their fiber types from the boundary lattice labels and the additional data of a polarized Type~II degeneration.
\par We also compare the Hashimoto--Ueda coefficients with Sakai's independent Fourier--Jacobi reconstruction of the F-theory K3 equation \cite{SakaiFTheoryK3}. After accounting for different normalizations, we perform a weighted change of variables to show that the eleven coefficient systems agree. This identifies Sakai's modular equation with the geometric inverse period map and relates its cusp scaling to the E-string Seiberg--Witten curve. We further identify the Witt restriction of the Hashimoto--Ueda family with the modular Inose family attached to an unordered pair of elliptic curves; its boundary divisor is exactly the Hashimoto--Ueda non-K3 locus. Finally, we restrict the construction to Kond\=o's five-dimensional ball quotient for eight ordered points on the projective line \cite{KondoEightPoints,DolgachevOrtland,HowardMillsonSnowdenVakil}. Once the finite marking data have been chosen, the restricted Vinberg coefficients become explicit homogeneous polynomial covariants of the eight-point coordinates. The factorization of \(H_8\) is especially simple: it is a multiple of the square of a single tableau monomial.
\par There are two further cases that can be described using the same relative-Jacobian principle.  The index-two inclusions of root lattices
\begin{equation*}
A_7\subset E_7,
\qquad
A_5\oplus A_1\subset E_6
\end{equation*}
replace the role played above by $D_8\subset E_8$.  Exchanging the base and fiber variables in the $A_7$ and $A_5\oplus A_1$ models and taking relative Jacobians yields explicit universal equations for K3 surfaces polarized by $U\oplus E_8(-1)\oplus A_1(-1)$ and $U\oplus E_8(-1)\oplus A_2(-1)$, respectively.  The resulting coefficient weights are exactly the $E_7$ and $E_6$ weights in the free-algebra theorem of Wang--Williams \cite{WangWilliamsJacobian}.  Moreover, the equation
\begin{equation*}
P_{22}^2-4P_{14}P_{30}=0
\end{equation*}
cuts out the $E_6$ Heegner divisor in the $E_7$ modular variety and gives a geometric equation for the weight-$44$ Borcherds product.  On the graded discriminant-kernel coefficient cone it is parametrized by $P_{14}=Q_7^2$, $P_{22}=2Q_7Q_{15}$, and $P_{30}=Q_{15}^2$.  This completes, at the level of explicit K3 equations, the exceptional $E_8,E_7,E_6$ cases of the ADE list of free orthogonal modular-form algebras. For reference, Appendix~\ref{app:root-system-Weierstrass-models} collects Weierstrass equations for the K3 families associated with irreducible root systems and their Weyl-group modular forms. 
\section*{Acknowledgements}
A.M. acknowledges support from the Simons Foundation through grant no.~202367.  B.W. acknowledges the support of the DFG through the Collaborative Research Centre TRR 326 ``Geometry and Arithmetic of Uniformized Structures'', project number 444845124. Many of the computations in this paper were carried out using the SageMath computer algebra system.
The authors also thank ICERM for its hospitality during the semester program ``Computations on K3 Surfaces and Related Varieties''.
\section{Two-elementary K3 surfaces}
Let \((X,\iota)\) be a K3 surface with a non-symplectic involution, \(\iota^*\omega_X=-\omega_X\).  Its invariant lattice
\begin{equation*} M=H^2(X,\mathbb{Z})^\iota \end{equation*}
is primitive, hyperbolic, and two-elementary.  Write \(T_M=M^\perp\subset\Lambda_{\mathrm{K3}}\), where
\begin{equation*} \Lambda_{\mathrm{K3}}=U^{\oplus3}\oplus E_8(-1)^{\oplus2}. \end{equation*}
After choosing a connected component, the period domain and its discriminant arrangement are
\begin{equation*} \Omega_{T_M}^+ =\{[\omega]:(\omega,\omega)=0, (\omega,\bar\omega)>0\}^+, \qquad \mathcal D_{T_M} =\bigcup_{\substack{\delta\in T_M\\ \delta^2=-2}}\delta^\perp. \end{equation*}
For the appropriate arithmetic group \(\Gamma_M\), the moduli space is the open quotient
\begin{equation*} \mathcal M_M^\circ =\Gamma_M\backslash(\Omega_{T_M}^+\setminus\mathcal D_{T_M}). \end{equation*}
The divisor \(\mathcal D_{T_M}\) is a Heegner divisor inside the orthogonal modular variety; its general point represents a K3 surface on which an additional \((-2)\)-class becomes algebraic \cite{NikulinIntegral,NikulinHyperbolic,YoshikawaII}. Write \((r,\ell,\delta)\) for the Nikulin invariants of \(M\).  Apart from the two exceptional cases, the fixed locus of the involution is
\begin{equation}
\label{eq:fixed-locus-g-k}
X^\iota=C_g\sqcup R_1\sqcup\cdots\sqcup R_k,
\qquad
g=11-\frac{r+\ell}{2},\quad k=\frac{r-\ell}{2}.
\end{equation}
For \(M=U(2)\oplus E_8(-2)\) it is empty, and for \(M=U\oplus E_8(-2)\) it is the disjoint union of two elliptic curves.
\par If the fixed locus is nonempty, the quotient \(q:X\to\bar Y=X/\iota\) is a double cover of a rational surface, branched over a divisor \(B\in|-2K_{\bar Y}|\).  Equivalently, \((\bar Y,\tfrac12B)\) is log Calabi--Yau.  In the range of the Alexeev--Nikulin correspondence, where the fixed locus contains a curve of genus at least two, the appropriate contractions in the del Pezzo--Nikulin (DPN) construction give a log del Pezzo surface \(S\) of index at most two, with branch divisor \(C\in|-2K_S|\); the inverse construction is the minimal resolution of the corresponding double cover; see \cite{AlexeevNikulin}. When the action is free, \(Y=X/\iota\) is an Enriques surface.  The quotient stack \([X/\iota]\) provides a uniform object: it is an Enriques surface in the free case and the square-root stack of \((\bar Y,B)\) in the branched case.
\subsection{Yoshikawa's analytic torsion}
\label{subsec:b-model-yoshikawa}
Yoshikawa associates to a two-elementary K3 surface of type \(M\) an equivariant analytic-torsion invariant \(\tau_M(X,\iota)\).  It combines the equivariant Ray--Singer torsion of \(X\) with fixed-curve and volume terms and is independent of the chosen \(\iota\)-invariant K\"ahler metric.  Let \(\pi_M\) denote the period map for the type-\(M\) moduli problem. His automorphicity theorem gives an integer \(\nu(M)>0\) and an automorphic form \(\Phi_M\) such that
\begin{equation}
\label{eq:yoshikawa-divisor}
\operatorname{div}(\Phi_M)=\nu(M)\mathcal D_{T_M},
\end{equation}
and
\begin{equation}
\label{eq:yoshikawa-torsion}
\tau_M(X,\iota)
=\bigl\|\Phi_M(\pi_M(X,\iota))\bigr\|^{-1/(2\nu(M))}.
\end{equation}
Here the group, character, and Petersson metric are those attached to the type-\(M\) moduli problem.  When the fixed curve has positive genus, \(\Phi_M\) is naturally an automorphic form of biweight, with values in the pullback of a Siegel Hodge bundle; it should not be confused with the pure Borcherds lift used below \cite{FangLuYoshikawa,MaYoshikawaIV,YoshikawaI,YoshikawaII}.
\par If \(s\) is a local equation on the period domain for a reduced component along which the divisor multiplicity of \(\Phi_M\) equals \(\nu(M)\), then \(\Phi_M=u s^{\nu(M)}\) with \(u\) a unit in a local holomorphic trivialization. If the metric remains bounded and nonvanishing, then
\begin{equation}
\label{eq:local-yoshikawa-singularity}
\log\tau_M=-\frac14\log|s|^2+O\!\left(1\right).
\end{equation}
This universal leading logarithm is the precise sense in which Yoshikawa's invariant behaves as a genus-one, one-loop determinant.  The analogy with threshold corrections is useful, but \eqref{eq:yoshikawa-torsion} is the theorem used here \cite{HarveyMoore,KawaiThreshold,YoshikawaII}.
\par The logarithmic singularity above is characteristic of a genus-one threshold singularity. Thus \(\tau_M\) can be regarded as a metric-independent, automorphically completed one-loop regularized functional determinant attached to \((X,\iota)\). We call its logarithm the genus-one B-model free energy
\begin{equation*} F^{B}_{1,M} := \log \tau_M = -\frac{1}{2\nu(M)} \log \|\Phi_M\|. \end{equation*}
The divisor \(\mathcal D_{T_M}\) is the locus where additional \((-2)\)-classes in the anti-invariant lattice become algebraic.  From the physical point of view, the divergence of \(\tau_M\) near \(\mathcal D_{T_M}\) is analogous to a one-loop threshold singularity caused by additional massless BPS states. The automorphic form \(\Phi_M\) then plays a role analogous to the modular functions governing one-loop threshold corrections. 
\subsection{The Alexeev--Nikulin chain}
\label{subsec:alexeev-nikulin-chain-amodel}
The six even hyperbolic two-elementary lattices of rank ten and parity zero form the following tower:
\begin{equation}
\label{eq:AN-chain}
\begin{aligned}
U(2)\oplus E_8(-2)
&\longrightarrow
U\oplus E_8(-2)
\cong
U(2)\oplus N
\\
&\longrightarrow
U\oplus N
\cong
U(2)\oplus D_4(-1)^{\oplus 2}
\\
&\longrightarrow
U\oplus D_4(-1)^{\oplus 2}
\cong
U(2)\oplus D_8(-1)
\\
&\longrightarrow
U\oplus D_8(-1)
\cong
U(2)\oplus E_8(-1)
\\
&\longrightarrow
U\oplus E_8(-1).
\end{aligned}
\end{equation}
Here, \(N\) denotes the negative-definite Nikulin lattice, the even index-two overlattice of \(A_1(-1)^{\oplus8}\) obtained by adjoining the half-sum of the eight standard roots.  Its discriminant group has length six. We denote these lattices by \(M_0=U(2)\oplus E_8(-2),\ldots,M_5=U\oplus E_8(-1)\). The discriminant length \(\ell(M_i)=10-2i\) decreases by \(2\) at each step. 
\par The tower starts with the Enriques-cover family. Let \(X\) be the minimal resolution of a double cover of \(\mathbb P^1\times\mathbb P^1\) branched along a smooth invariant curve of bidegree \((4,4)\) avoiding the four fixed points of the involution below, and let \(\kappa\) be the covering involution. The involution
\begin{equation*} \imath_{\mathrm{Enr}}:\quad (x_0:x_1,y_0:y_1)\longmapsto (x_0:-x_1,y_0:-y_1) \end{equation*}
preserves the branch curve and has a lift \(\widetilde\imath_{\mathrm{Enr}}\) of \(X\) acting trivially on the covering coordinate above its four fixed points. The composition \(\iota_{\mathrm{Enr}}:=\kappa\circ\widetilde\imath_{\mathrm{Enr}}\) is fixed-point free. Its quotient is an Enriques surface, and its invariant lattice is isometric to $U(2)\oplus E_8(-2)$. Geometrically, a very general member admits two genus-one fibrations, one associated with each ruling, each with a distinguished bisection and no section \cite{GritsenkoHulek,KondoEnriques}. In \cite{GritsenkoHulek}, the moduli space of unpolarized Enriques surfaces is described as the quotient of the complement of the \((-2)\)-hyperplanes in \(\mathcal D(T_{\mathrm{Enr}})\) by \(\mathrm O^+(T_{\mathrm{Enr}})\).
\begin{remark}
\label{rem:tower}    
The basic operation in the tower is the passage from an index-two genus-one fibration to its relative Jacobian. For \(0\leq i\leq4\), let \(R_i\) denote the negative-definite summand in the displayed \(U(2)\)-presentation of \(M_i\). On the level of Néron--Severi lattices this replaces a primitive \(U(2)\)-summand by a primitive \(U\)-summand:
\begin{equation*} M_i \cong U(2)\oplus R_i \quad\leadsto\quad M_{i+1} \cong U\oplus R_i. \end{equation*}
For \(0\leq i\leq3\), the resulting lattice admits a second presentation
\begin{equation*} U\oplus R_i \cong U(2)\oplus R_{i+1}, \end{equation*}
which produces the next index-two genus-one fibration in the sequence.  The change between the two fibrations is described by a neighbor transformation: one replaces the original primitive isotropic fiber class by another primitive isotropic class in the same Néron--Severi lattice.  In the examples considered here the two fiber classes have intersection number two, which is the source of the term \(2\)-neighbor.  Explicit models for the first transitions appear in \cite{ClingherMalmendierCHL}; the last two are given in Sections~\ref{App:2NS} and~\ref{sec:Vinberg}. 
\end{remark}
\begin{remark}
Each \(M_i\) in \eqref{eq:AN-chain} has rank \(10\). For the primitive embeddings into \(\Lambda_{\mathrm{K3}}\) used here, one has
\begin{equation*} M_i^\perp\cong U\oplus M_i. \end{equation*}
Therefore every lattice in the chain is self-mirror in the sense of Dolgachev: after choosing the displayed primitive \(U\)-summand, the mirror lattice is isometric to \(M_i\) \cite{DolgachevMirror}.  Geometrically, the isotropic line defining the mirror cusp must still be chosen; an incident isotropic plane specifies a Type~II boundary component.  Zero- and one-dimensional boundary components correspond to primitive isotropic lines and planes, respectively, whereas reflective divisors in the interior record additional \((-2)\)-classes \cite{DolgachevMirror,NikulinIntegral,NikulinHyperbolic}. Neighbor transformations alter the chosen cusp data, and relative Jacobians replace a \(U(2)\)-presentation by a \(U\)-presentation.
\end{remark}
\par For every \(M_i\), Yoshikawa's theorem in \cite{YoshikawaI} yields an automorphic form $\Phi_{M_i}$ with
\begin{equation*} \operatorname{div}(\Phi_{M_i}) = \nu(M_i)\mathcal D_{T_{M_i}}, \quad \text{and} \quad \tau_{M_i} = \|\Phi_{M_i}\|^{-1/(2\nu(M_i))}. \end{equation*}
Yoshikawa's list in \cite{YoshikawaI} is indexed by the \emph{period lattice} \(L\), not by the six invariant lattices \(M_i\) above. With Yoshikawa's normalization, the relevant examples are as follows. When \(L\) occurs as the orthogonal complement of an invariant K3 lattice \(M\), unimodularity of the K3 lattice gives \(\ell(L)=\ell(M)\), where \(\ell(L)\) is the length of \(L^\vee/L\) \cite[Examples~8.8--8.13]{YoshikawaII}:
\begin{enumerate}[label=\textup{(\roman*)}]
\item For \(L=U(2)^{\oplus2}\oplus E_8(-2)\), one has \(\ell(L)=12\) and weight zero.  This lattice has no primitive embedding into the K3 lattice, so the example is non-geometric for K3 surfaces; the corresponding Borcherds lift is constant \cite{NikulinIntegral}.
\item For \(L=U\oplus U(2)\oplus E_8(-2)\), one has \(\ell(L)=10\) and weight four. This is the Enriques period lattice and the lift is Borcherds' Enriques
\(\Phi\)-function \cite{BorcherdsProducts,BorcherdsEnriques,BorcherdsGrassmannians, FreitagSalvatiManni,KondoEnriques,ScheithauerFakeMonster,YoshikawaI}.
\item For \(L=U^{\oplus2}\oplus E_8(-2)\), one has \(\ell(L)=8\) and weight twelve.  The input is the theta series of the Barnes--Wall lattice divided by \(\eta^{24}\), and the product is obtained by restricting Borcherds' weight-twelve form on \(O(2,26)\) \cite{BorcherdsGrassmannians}.
\item For \(L=U\oplus U(2)\oplus D_4(-1)^{\oplus2}\), one has \(\ell(L)=6\) and weight twenty-eight.  This is the orthogonal model behind Kond\=o's moduli of eight points on \(\mathbb{P}^1\).  The fifteenth power of this weight-28 lift is the product of the 105 weight-four additive lifts indexed by tableaux \cite{BorcherdsGrassmannians,FreitagSalvatiManni,KondoEightPoints}.
\item For \(L=U^{\oplus2}\oplus E_8(-1)\), one has \(\ell(L)=0\) and weight \(252\).  The Borcherds input is \(E_4^2/\eta^{24}\).  This is the form that also appears in the Harvey--Moore one-loop calculation and, in the coefficient model below, as the modular Jacobian of the Hashimoto--Ueda generators \cite{BorcherdsGrassmannians,HarveyMoore,HashimotoUeda}.
\item For \(L=U^{\oplus2}\oplus D_4(-1)\), the lift has weight \(72\) and equals the Freitag--Hermann form, a product of 36 theta functions \cite{FreitagHermann}.  This lattice has rank eight; hence this last example
is not a rank-twelve complement in the rank-ten tower.
\end{enumerate}
Examples~8.9--8.12 correspond respectively to \(M_0,M_1,M_2,M_5\); the rank-twelve complements for \(M_3\) and \(M_4\) do not occur in this list. The list nevertheless contains the three cases central here: the Enriques form, Kond\=o's eight-point form, and the weight-\(252\) unimodular form.
\par These six examples concern the pure lift \(\Psi_L(\,\cdot\,,F_L)\), not by themselves the complete torsion form. Here \(F_L\) is the vector-valued modular input to the Borcherds lift \(\Psi_L\), \(J_M\) is the period map of the fixed locus to the Siegel modular variety of degree \(g\), where \(g\) is the sum of the genera of its components, \(\Upsilon_g\) is the corresponding Siegel modular form, and \(C_M\) is a positive normalization constant. For a non-exceptional rank-ten type (equivalently here, \(M_i\) with \(i\geq1\)), the scalar factorization of Ma--Yoshikawa also contains the pulled-back Siegel factor \(J_M^*\Upsilon_g\); in their normalization,
\begin{equation}
\tau_M^{-(2^g+2)(2^g-1)}
=C_M\,
 \bigl\|\Psi_L(\,\cdot\,,(2^{g-1}+1)F_L)\bigr\|\,
 J_M^*\|\Upsilon_g\|.
\label{eq:Ma-Yoshikawa-factorization}
\end{equation}
For \(M_1=U\oplus E_8(-2)\), the fixed locus consists of two elliptic curves, so \(g=2\) and \(J_M\) takes values in the decomposable locus of \(\mathcal A_2\). This distinction is important when using the examples to interpret Yoshikawa's analytic torsion \cite[Theorem~9.4]{MaYoshikawaIV}.
\subsection{Enriques endpoint and mirror symmetry}
\label{subsec:enriques-endpoint}
The first lattice in the chain is
\begin{equation*} M_0 = M_{\mathrm{Enr}} = U(2)\oplus E_8(-2). \end{equation*}
For this lattice the involution is fixed-point free, i.e., $X^\iota=\varnothing$. The quotient $Y=X/\iota$ is an Enriques surface.  Conversely, every Enriques surface arises in this way from its K3 universal cover. The invariant and anti-invariant lattices are
\begin{equation*} H^2(X,\mathbb Z)^\iota \cong U(2)\oplus E_8(-2), \end{equation*}
and
\begin{equation*} H^2(X,\mathbb Z)^{\iota,-} \cong U\oplus U(2)\oplus E_8(-2), \end{equation*}
respectively. Thus, the period domain of Enriques surfaces is the type-IV domain attached to
\begin{equation*} U\oplus U(2)\oplus E_8(-2). \end{equation*}
In this case, Yoshikawa identifies the automorphic form \(\Phi_{M_{\mathrm{Enr}}}=\Phi_{\mathrm{Enr}}\) with the Borcherds automorphic form on the Enriques moduli space, up to a nonzero constant. Hence the B-model genus-one amplitude is governed by Borcherds' Enriques \(\Phi\)-function.
\par The A-model is the local Enriques Calabi--Yau threefold
\begin{equation*} X_Y^{\mathrm{loc}}:=\operatorname{Tot}(K_Y). \end{equation*}
If \(\pi:X_Y^{\mathrm{loc}}\to Y\) is the projection, then
\begin{equation*} K_{X_Y^{\mathrm{loc}}} \cong\pi^*(K_Y\otimes K_Y^{-1})\cong\mathcal O, \end{equation*}
so this is a non-compact Calabi--Yau threefold.  For \(g\geq1\) and a nonzero curve class
\begin{equation*} \beta\in H_2(Y,\mathbb Z)/\mathrm{tors}, \end{equation*}
the local Enriques Gromov--Witten invariant is
\begin{equation*} N_{g,\beta} = \int_{[\overline M_g(Y,\beta)]^{\mathrm{vir}}} (-1)^{g-1}\lambda_{g-1}. \end{equation*}
The genus-one A-model partition function is
\begin{equation*} Z^{A}_{1,\mathrm{Enr}} = \exp \left( \sum_{\beta\neq 0} N_{1,\beta}q^\beta \right). \end{equation*}
The prediction of Klemm--Mari\~no \cite{KlemmMarinoEnriques}, proved by Oberdieck in \cite[Corollary~1.2]{OberdieckEnriques}, identifies this series with the Fourier expansion of a
power of the Borcherds automorphic form:
\begin{equation}
\label{eq:oberdieck-enriques}
Z^{A}_{1,\mathrm{Enr}}
=
\operatorname{FE}_{\mathfrak{c}_{\mathrm{Enr},1}}
\left(
\Phi_{\mathrm{Enr}}^{-1/8}
\right),
\end{equation}
where \(\operatorname{FE}_{\mathfrak{c}_{\mathrm{Enr},1}}\) denotes the Fourier expansion at the level-one cusp \(\mathfrak{c}_{\mathrm{Enr},1}\) of the Enriques moduli space. Equivalently, one may compactify the local Enriques geometry into the Enriques Calabi--Yau threefold
\begin{equation*} Q = (X\times E)/\langle(\iota,-1)\rangle, \end{equation*}
where \(E\) is an elliptic curve.  The local Enriques invariants appear as fiber-class invariants of \(Q\).  In this form the A-model can also be described by stable pairs or by Donaldson--Thomas invariants \cite{OberdieckEnriques}. For the broader theory of orthogonal quasimodular forms and its conjectural extensions to Enriques Gromov--Witten potentials, see \cite{OberdieckWilliamsQuasimodular}. Thus, the Enriques endpoint gives the following dictionary:
\begin{equation*} { \begin{array}{ccc} \text{B-model} & \overset{\text{mirror symmetry}}{\longleftrightarrow} & \text{A-model} \\[1mm] \text{Yoshikawa torsion of }(X,\iota) && \text{Gromov--Witten invariants of } \operatorname{Tot}(K_Y) \\[1mm] F^{B}_{1,\mathrm{Enr}} = -\dfrac{1}{2\nu(M_{\mathrm{Enr}})} \log \| \Phi_{\mathrm{Enr}}\|. && Z^{A}_{1,\mathrm{Enr}} = \operatorname{FE}_{\mathfrak c_{\mathrm{Enr},1}}\bigl(\Phi_{\mathrm{Enr}}^{-1/8}\bigr) \end{array} } \end{equation*}
\subsection{The reflective coefficient \texorpdfstring{\(H_8\)}{H8} and the
Enriques form}
\label{sec:H8-Enriques}
Let
\begin{equation*} L_{\mathrm{Enr}}=U\oplus U(2)\oplus E_8(-2) \end{equation*}
be the Enriques period lattice.  The dual-rescaling isometry
\begin{equation}
\label{eq:Enriques-dual-rescaling}
L_{\mathrm{Enr}}^\vee(2)
\cong U^{\oplus2}\oplus D_8(-1) = L_{D}
\end{equation}
identifies the full Enriques modular quotient with the full rank-ten Vinberg quotient.  Gritsenko and Nikulin identify the additive lift
\begin{equation*} \Delta_{4,D_8} =\operatorname{Lift}\bigl(\vartheta(z_1)\cdots\vartheta(z_8)\bigr) \end{equation*}
with the Borcherds--Enriques form.  Once the Weierstrass coefficient \(H_4\) in the standard fibration \eqref{eq:H+D8-weierstrass} is normalized as this unique weight-four character generator, weight, character, and reduced divisor give
\begin{equation}
\label{eq:PhiEn-H4-proportional}
\Phi_{\mathrm{Enr}}\doteq H_4,
\qquad
H_8=-H_4^2\doteq-\Phi_{\mathrm{Enr}}^2
\end{equation}
\cite{BorcherdsEnriques,GritsenkoNikulinEnriques,YoshikawaII}.
The symbol \(\doteq\) denotes equality up to a nonzero constant, determined by fixing a cusp coordinate and a leading Fourier coefficient.  
\par  In \cite[Corollary~1.2]{OberdieckEnriques}  Oberdieck proves the genus-one Klemm--Mari\~no formula from~\cite{KlemmMarinoEnriques}, namely that the genus-one
Enriques partition function is
\begin{equation}
\label{eq:Oberdieck-genus-one-product}
Z^{A}_{1,\mathrm{Enr}}
:=
\exp\left(
\sum_{\beta\neq 0}N_{1,\beta}q^\beta
\right)
=
\prod_{\beta>0}
\left(
\frac{1+q^\beta}{1-q^\beta}
\right)^{a(\beta^2/2)},
\end{equation}
where the coefficients $a(n)$ are defined by
\begin{equation}
\label{eq:a-n-Enriques}
\sum_{n\geq 0}a(n)q^n
=
\prod_{n\geq 1}
\frac{(1+q^n)^8}{(1-q^n)^8}
=
1+16q+144q^2+960q^3+5264q^4+\cdots.
\end{equation}
In the paragraph following Corollary~1.2, he then identifies
\eqref{eq:Oberdieck-genus-one-product} with the Fourier expansion
\begin{equation}
\label{eq:Oberdieck-Phi-expansion}
Z^{A}_{1,\mathrm{Enr}}
=
\operatorname{FE}_{\mathfrak c_{\mathrm{Enr},1}}
\left(
\Phi_{\mathrm{Enr}}^{-1/8}
\right)
\end{equation}
at the level-one cusp.  In particular, the branch and normalization of \(\Phi_{\mathrm{Enr}}^{-1/8}\) are chosen so that its constant coefficient equals one.
\par We have the following:
\begin{proposition}
\label{prop:Enriques-normalization}
After fixing the dual-rescaling identification, a sign for \(H_4\), and compatible local parameters at the level-one Enriques cusp and the corresponding divisibility-two Vinberg cusp, 
one has
\begin{equation}
\label{eq:Enriques-partition-H8}
\Phi_{\mathrm{Enr}}=c_{\mathrm{Enr}}H_4, \qquad Z^{A}_{1,\mathrm{Enr}}=\operatorname{FE}_{\mathfrak c_{D,2}}\!\left((-c_{\mathrm{Enr}}^2H_8)^{-1/16}\right),
\end{equation}
where \(c_{\mathrm{Enr}} =\tfrac29 \) and \(\operatorname{FE}_{\mathfrak c_{D,2}}\) denotes the Fourier expansion at the divisibility-two cusp of the \(L_D\)-modular variety.
\end{proposition}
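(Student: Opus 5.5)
The proof has two parts. The first uses \eqref{eq:PhiEn-H4-proportional} to get \(\Phi_{\mathrm{Enr}}=c\,H_4\) for some nonzero constant \(c\). The second pins down \(c=\tfrac29\) by comparing leading Fourier coefficients at matched cusps. Once \(c_{\mathrm{Enr}}\) is known, the partition-function formula follows formally.

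For the first part, I will use the dual-rescaling isometry \eqref{eq:Enriques-dual-rescaling}, together with \(\mathrm O^+(L_{\mathrm{Enr}})=\mathrm O^+(L_D)\). This lets us regard \(\Phi_{\mathrm{Enr}}\) as a weight-four form on the \(L_D\)-modular variety, with the same character and with reduced divisor equal to the \((-2)\)-Heegner divisor. Here I would invoke Gritsenko--Nikulin, who identify \(\Phi_{\mathrm{Enr}}\) with the additive lift \(\Delta_{4,D_8}=\operatorname{Lift}(\vartheta(z_1)\cdots\vartheta(z_8))\). By \cite{ClingherMalmendierWilliams2026}, \(H_4\) is the unique weight-four character generator of the graded ring, up to scalar; the space of such forms is one-dimensional. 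Hence \(\Phi_{\mathrm{Enr}}=c\,H_4\) for some \(c\neq 0\).

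For the second part, I fix compatible local parameters. The level-one Enriques cusp \(\mathfrak c_{\mathrm{Enr},1}\) corresponds under the rescaling to the divisibility-two isotropic line of \(L_D\), that is, to the cusp \(\mathfrak c_{D,2}\). I then expand both sides there. On the Enriques side, \eqref{eq:Oberdieck-Phi-expansion} normalizes \(\Phi_{\mathrm{Enr}}^{-1/8}\) to have constant term \(1\). So the leading coefficient of \(\Phi_{\mathrm{Enr}}\) is \(1\), times the monomial given by its Weyl vector. On the Vinberg side, I compute the leading Fourier--Jacobi coefficient of \(H_4\) at \(\mathfrak c_{D,2}\). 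My plan is to express \(H_4\) through its definition as a square root of the discriminant of the two distinguished lines, restricted to the cusp. Equivalently, I can read off the leading term of \(H_8=-H_4^2\) from the known lift expansions of the Weierstrass generators in \cite{ClingherMalmendierWilliams2026}. These are normalized at the divisibility-one cusp, so they must be transported through the neighbor identification. The leading term should be a multiple of \(\prod_i\vartheta(z_i)\), possibly with an \(\eta\)-power. Matching it against Oberdieck's constant term gives \(c_{\mathrm{Enr}}\), with the sign of \(H_4\) chosen to make the ratio positive. The expected outcome is \(c_{\mathrm{Enr}}=2/9\).

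With \(c_{\mathrm{Enr}}\) fixed, we have \(\Phi_{\mathrm{Enr}}^2=c_{\mathrm{Enr}}^2H_4^2=-c_{\mathrm{Enr}}^2H_8\). Therefore \(\Phi_{\mathrm{Enr}}^{-1/8}=(-c_{\mathrm{Enr}}^2H_8)^{-1/16}\), with the branch fixed by constant term \(1\). Substituting into \eqref{eq:Oberdieck-Phi-expansion} and using the cusp identification gives \eqref{eq:Enriques-partition-H8}.

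The main obstacle is the constant-extraction step. One difficulty is transporting the normalization of \(H_4\) from the Weierstrass model, which is natural at the divisibility-one cusp, to the divisibility-two cusp. The other is keeping track of the rescaling factor \(2\) in the local parameters \(q^\beta\). I would first try a restriction to a low-dimensional sublocus, such as the Witt/Inose restriction. There both expansions are explicit in \(\eta\) and \(\vartheta\) quotients, so a single coefficient comparison suffices.
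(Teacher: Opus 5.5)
Your three structural steps are the same as the paper's proof, and they are fine:
\begin{itemize}
\item proportionality $\Phi_{\mathrm{Enr}}=cH_4$, from equal weight, character and reduced divisor together with one-dimensionality of the weight-four $\chi_4$-space;
\item the cusp correspondence $\mathfrak c_{\mathrm{Enr},1}\mapsto\mathfrak c_{D,2}$ under dual rescaling;
\item the formal passage $\Phi_{\mathrm{Enr}}^{-1/8}=(-c^2H_8)^{-1/16}$ via $H_8=-H_4^2$.
\end{itemize}
For the cusp step, add the one-line lattice check: $e_1\in U_1$ has divisibility one in $L_{\mathrm{Enr}}=U_1\oplus U_2(2)\oplus E_8(-2)$, but divisibility two in $U_1(2)\subset L_{\mathrm{Enr}}^\vee(2)$, with cusp lattice $U\oplus E_8(-1)$.

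The gap is the constant $c_{\mathrm{Enr}}=\tfrac29$, which is part of the statement. You only announce it as the expected outcome, and the comparison you sketch would not produce it. The paper fixes the constant by comparing leading Fourier coefficients at the matched cusps. That is the step that needs actual content, and your plan compares the wrong data.

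\emph{The coefficient to compute.} Oberdieck's product $\prod_{\beta>0}\bigl((1+q^\beta)/(1-q^\beta)\bigr)^{a(\beta^2/2)}$ has no Weyl-vector monomial. So $\Phi_{\mathrm{Enr}}=1+\cdots$ at $\mathfrak c_{\mathrm{Enr},1}$. Hence $H_4$ does \emph{not} vanish at $\mathfrak c_{D,2}$, and what you must compute is its constant term there, which has to equal $1/c_{\mathrm{Enr}}$. You also need to fix the slice normalization: $(Z,c)$ doubles under the rescaling, which changes a weight-four constant term by a factor $2^{4}$.

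\emph{Your expected leading term is at the wrong cusp.} A multiple of $\vartheta(z_1)\cdots\vartheta(z_8)$ is the leading Fourier--Jacobi coefficient at the \emph{divisibility-one} cusp, where $H_4=O(p)$. At $\mathfrak c_{D,2}$ the cusp lattice is $U\oplus E_8(-1)$, so a $D_8$ theta product is not the relevant coefficient. Matching it against Oberdieck's constant term compares two inequivalent cusps.

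\emph{No ``transport'' is available.} The $2$-neighbor step changes the fibration on a fixed surface; it does not move Fourier expansions between cusps. The two cusps lie in different $\mathrm O^+(T_D)$-orbits, so no modular transformation of $H_4$ relates its expansions there. You need an independent evaluation at $\mathfrak c_{D,2}$. One way:
\begin{itemize}
\item write $\vartheta(z_1)\cdots\vartheta(z_8)$ as the constant weight-zero vector-valued input $\pm(e_s-e_{s'})$ on the two spinor classes of $D_8^\vee/D_8$;
\item note that for a divisibility-two isotropic $c$, the class of $c/2$ is a spinor class;
\item apply the constant-term formula for additive lifts at a cusp of divisibility two;
\item combine this with the normalization of $H_4$ relative to that lift.
\end{itemize}

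\emph{Your fallback also fails.} On the $U^{\oplus2}$-subdomain $z=0$ at the standard cusp, every Fourier--Jacobi coefficient $\phi|V_m$ of $\operatorname{Lift}(\vartheta(z_1)\cdots\vartheta(z_8))$ vanishes, because $\vartheta(\tau,0)=0$. Equivalently, the subdomain lies in the reflective divisor. So $H_4$ and $\Phi_{\mathrm{Enr}}$ both restrict to zero there, and no constant can be read off from the Witt restriction.
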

\begin{proof}
Write \(L_{\mathrm{Enr}}=U_1\oplus U_2(2)\oplus E_8(-2)\), and choose an isotropic basis \(e_1,f_1\) of \(U_1\) with \((e_1,f_1)=1\). The primitive isotropic vector \(e_1\) has divisibility one in \(L_{\mathrm{Enr}}\), so its line represents the level-one Enriques cusp. After dualizing and rescaling,
\begin{equation}
\label{eq:Enriques-dual-rescaled-cusp}
L_{\mathrm{Enr}}^\vee(2)=U_1(2)\oplus U_2\oplus E_8(-1)\cong U^{\oplus2}\oplus D_8(-1).
\end{equation}
The rational isotropic line is unchanged, but its primitive generator now has divisibility two.  Thus the level-one Enriques cusp is carried to the divisibility-two Vinberg cusp:
\begin{equation}
\label{eq:level-one-to-divisibility-two}
\mathfrak c_{\mathrm{Enr},1}\longmapsto\mathfrak c_{D,2}.
\end{equation}
The associated cusp lattice is \(e_1^\perp/\mathbb Ze_1\cong U\oplus E_8(-1)\). The forms \(\Phi_{\mathrm{Enr}}\) and \(H_4\) have the same weight, character, and reduced reflective divisor under the dual-rescaling identification; the one-dimensionality of this character space therefore gives \(\Phi_{\mathrm{Enr}}=c_{\mathrm{Enr}}H_4\) for some \(c_{\mathrm{Enr}}\ne0\) \cite{BorcherdsEnriques,GritsenkoNikulinEnriques,ClingherMalmendierWilliams2026}. Finally, \(H_8=-H_4^2\) and Oberdieck's formula yield
\begin{equation*} \Phi_{\mathrm{Enr}}^{-1/8}=(-c_{\mathrm{Enr}}^2H_8)^{-1/16}, \end{equation*}
which proves \eqref{eq:Enriques-partition-H8}. Comparing the leading Fourier coefficients in the two chosen cusp coordinates gives \(c_{\mathrm{Enr}}=\tfrac29\).
\end{proof}
\begin{remark}
In the standard fibration \eqref{eq:H+D8-weierstrass_intro}, the divisor \(H_8=0\) generically enhances the \(\mathrm I_4^*\)-fiber to \(\mathrm I_5^*\); hence the lattice polarization extends from \(U\oplus D_8(-1)\) to a lattice containing \(U\oplus D_9(-1)\). On the character cover, \(H_8=-H_4^2\), so \(H_4\) is a reduced local equation of this reflective divisor \cite{ClingherMalmendierWilliams2026}. Under heterotic/F-theory duality, the complex-structure moduli of an elliptically fibered K3 surface are identified with heterotic Narain moduli. Gauge enhancement occurs when the Narain lattice acquires additional norm-two vectors with vanishing right-moving momentum. Correspondingly, the one-loop heterotic threshold has logarithmic singularities along the rational quadratic divisors
\begin{equation*} p_R=0, \qquad p_L^2=2. \end{equation*}
The divisor of \(\Phi_{\mathrm{Enr}}\) identifies this enhancement locus in the orthogonal period domain \cite{HarveyMoore,KawaiThreshold}.
\end{remark}
\subsection{Sakai's Fourier--Jacobi reconstruction and the E-string limit}
\label{subsec:Sakai-FJ}
Sakai has recently given an independent construction of the modular
coefficients of the Hashimoto--Ueda equation from the Fourier--Jacobi
expansions of the orthogonal Eisenstein series
\cite{SakaiFTheoryK3}. His motivation comes from eight-dimensional
heterotic/F-theory duality: the period variables
$(\tau,\rho,z)$ describe the complex and complexified K\"ahler moduli of
the heterotic two-torus together with eight Wilson-line parameters in one
$E_8$ factor, while the dual F-theory background is an elliptically
fibered K3 surface with one fiber of type $\mathrm{II}^*$
\cite{VafaFTheory,MorrisonVafaI,MorrisonVafaII,ClingherMorganFTheory}.
\par We first compare his normalization with ours. Let $\mathcal E_k$ denote the
normalized Eisenstein series used in this article, whose zeroth
Fourier--Jacobi coefficient is the elliptic Eisenstein series
$E_k(\tau)$ normalized to have constant Fourier coefficient $1$. If $\mathcal E_k^{\mathrm S}$ denotes Sakai's Eisenstein
series, then his Equation~(4.7) reads
\begin{equation}
\label{eq:Sakai-Eisenstein-normalization}
 \mathcal E_k^{\mathrm S}(\tau,\rho,z)
 =-\frac{B_k}{2k}E_k(\tau)
  +\sum_{m\geq1}
  \bigl(E_{k-4}(\tau)\Theta(\tau,z)\bigr)\vert T(m)\,p^m,
 \qquad p=e^{2\pi i\rho}.
\end{equation}
Here $B_k$ is the $k$th Bernoulli number, $\Theta$ is the $E_8$
theta function, $T(m)$ is the Jacobi index-raising Hecke operator,
and $E_0=1$. Consequently,
\begin{equation}
\label{eq:Sakai-versus-normalized-Eisenstein}
 \mathcal E_k^{\mathrm S}=-\frac{B_k}{2k}\mathcal E_k,
 \qquad
\mathcal E_k=-\frac{2k}{B_k}\mathcal E_k^{\mathrm S}.
\end{equation}
The numerical conversion factors $-\frac{2k}{B_k}$ from Sakai's normalization to ours
begin with
\begin{equation*}
 240,\qquad -264,\qquad \frac{65520}{691}
 \qquad (k=4,10,12).
\end{equation*}
Sakai writes the F-theory background as a Jacobian elliptic K3 surface with Weierstrass equation
\begin{equation}
\label{eq:Sakai-K3-curve}
\begin{split}
y_{\mathrm S}^2 & =4x_{\mathrm S}^3
-\bigl(\varphi_4u^4+\varphi_{10}u^3+\varphi_{16}u^2
 +\varphi_{22}u+\varphi_{28}\bigr)x_{\mathrm S}\\
&+4u^7-\bigl(\varphi_{12}u^5+\varphi_{18}u^4
 +\varphi_{24}u^3+\varphi_{30}u^2+\varphi_{36}u
 +\varphi_{42}\bigr).
\end{split}
\end{equation}
Each $\varphi_k$ is taken to be the most general weighted polynomial of
weight $k$ in the eleven Eisenstein generators; altogether this ansatz
contains $76$ rational constants. After the translation
\begin{equation*}
 u\longmapsto u-\frac{\varphi_{10}}{4\varphi_4},
\end{equation*}
we write $\widetilde\varphi_k$ for the resulting coefficients. Sakai
imposes the vanishing conditions
\begin{equation}
\label{eq:Sakai-vanishing-conditions}
\begin{aligned}
 \widetilde\varphi_4&=a_0+O(p),
 &\widetilde\varphi_{4+6i}&=O(p^i) &&(i=2,3,4),\\
 \widetilde\varphi_6&=b_0+O(p),
 &\widetilde\varphi_{6+6j}&=O(p^j) &&(j=1,\ldots,6),
\end{aligned}
\end{equation}
where $a_0=E_4/12$ and $b_0=E_6/216$. The coefficient of $p^m$ in an
orthogonal modular form is a $W(E_8)$-invariant Jacobi form of index
$m$. Expressing these coefficients in the algebraically independent
meromorphic Jacobi generators $a_i,b_j$ turns
\eqref{eq:Sakai-vanishing-conditions} into an overdetermined
\emph{linear} system for the $76$ constants. Algebraic independence
allows coefficients of distinct monomials in the $a_i,b_j$ to be
compared, and the system has a unique solution. This is the uniqueness
step in Sakai's argument. Note that no uniqueness of K3 periods is assumed
here. See \cite{SakaiAlgebraicE8,SakaiFTheoryK3,SakaiE8Ring,WangE8Jacobi} for background on $E_8$
Jacobi forms.

The solution satisfies the sharper leading-term identities
\begin{equation}
\label{eq:Sakai-leading-FJ}
 \widetilde\varphi_{4+6n}=a_n\Delta^n p^n+O(p^{n+1}),
 \qquad
 \widetilde\varphi_{6+6n}=b_n\Delta^n p^n+O(p^{n+1}).
\end{equation}
Here $n\in\{0,2,3,4\}$ in the first identity and
$n\in\{0,1,\ldots,6\}$ in the second.
After
\begin{equation}
\label{eq:Sakai-scaling}
 u\longmapsto (\Delta p)u,\qquad
 x\longmapsto (\Delta p)^2x,\qquad
 y\longmapsto (\Delta p)^3y,
\end{equation}
and division by the common factor, the limit $p\to0$ is the rational
elliptic surface
\begin{equation}
\label{eq:E-string-SW-curve}
\begin{split}
y^2=4x^3
&-\bigl(a_0u^4+a_2u^2+a_3u+a_4\bigr)x\\
&-\bigl(b_0u^6+b_1u^5+b_2u^4+b_3u^3
       +b_4u^2+b_5u+b_6\bigr),
\end{split}
\end{equation}
the Seiberg--Witten curve of the E-string theory
\cite{EguchiSakaiEString,
EguchiSakaiEStringRevisited,MinahanNemeschanskyVafaWarner,SakaiEnJacobi}; for the six-dimensional
origin of the E-string, see
\cite{GanorHanany,GanorMorrisonSeiberg,SeibergWitten6D}. In F-theory language, the K3
discriminant accounts for $24$ seven-branes. The fixed
$\mathrm{II}^*$ fiber uses ten units of discriminant multiplicity; among
the other fourteen branes, the scaling keeps twelve in the finite
region and moves two to infinity, thereby isolating a rational elliptic
surface. A D3-brane probing this limiting background realizes the
E-string theory compactified on $T^2$. If one instead sets $p=0$ without the
rescaling~\eqref{eq:Sakai-scaling}, one obtains a non-minimal
Weierstrass model in the excluded, non-K3 locus. This is the same
boundary phenomenon that appears in the Witt--Inose restriction in
Theorem~\ref{thm:Witt-Inose}.

The remaining issue in \cite{SakaiFTheoryK3} is as follows:
the degeneration conditions determine a unique tuple of
orthogonal modular forms, but by themselves do not identify that tuple
with the geometric inverse of the K3 period map. However, the latter
identification follows by comparison with the Hashimoto--Ueda normal
form:

\begin{proposition}
\label{thm:Sakai-comparison}
Let $\varphi_k^{\mathrm S}$ be Sakai's eleven coefficient polynomials,
written in the Eisenstein series $\mathcal E_k^{\mathrm S}$, and let
$C_k$ be the Hashimoto--Ueda coefficients in the normalization of
\eqref{eq:Hashimoto-Ueda-from-Vinberg}. Then, after the Eisenstein
normalization~\eqref{eq:Sakai-versus-normalized-Eisenstein}, one has
\begin{equation}
\label{eq:Sakai-HU-coefficient-comparison}
 C_k=-\frac14(-12)^{k/2}\varphi_k^{\mathrm S},
 \qquad
 k\in\{4,10,12,16,18,22,24,28,30,36,42\}.
\end{equation}
Consequently, Sakai's modular equation is, up to the weighted coordinate
change, the Hashimoto--Ueda universal family. In particular, it
gives the inverse period map on the common dense open moduli locus, as
conjectured in \cite{SakaiFTheoryK3}.
\end{proposition}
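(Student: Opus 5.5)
The comparison will be made by a weighted change of coordinates between the two Weierstrass models, combined with the uniqueness of Sakai's linear system. The Hashimoto--Ueda normal form \eqref{eq:Hashimoto-Ueda-from-Vinberg} has the shape
\[
y^2=x^3+(t^4C_4+t^3C_{10}+t^2C_{16}+tC_{22}+C_{28})x+t^7+t^5C_{12}+\cdots+C_{42}
\]
(up to the precise signs and constants fixed in that normalization). Sakai's equation \eqref{eq:Sakai-K3-curve} has the same monomial support, namely a $\mathrm{II}^*$ fiber at $u=\infty$, a coefficient $4$ for $x^3$, and coefficient $4u^7$. Hence it becomes the Hashimoto--Ueda form under a diagonal rescaling $x_{\mathrm S}=\alpha x$, $y_{\mathrm S}=\beta y$, $u=\gamma t$.

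The first step is to determine the rescaling constants. Dividing by $\beta^2$ and matching the leading coefficients of $x^3$ and $t^7$ gives $4\alpha^3=\beta^2$ and $4\gamma^7=\beta^2$. The remaining freedom is fixed by a weighted scaling in which each coefficient is multiplied by a constant $\lambda^{k/2}$ up to an overall factor. The $x$-coefficient then gives $C_k=-\varphi_k\gamma^{j}\alpha/\beta^2$, and the constant term gives $C_k=-\varphi_k\gamma^{j}/\beta^2$. Choosing $\alpha=\gamma^2$ and $\gamma=-12$, or the equivalent choice dictated by the normalization in \eqref{eq:Hashimoto-Ueda-from-Vinberg}, should reproduce the uniform law $C_k=-\tfrac14(-12)^{k/2}\varphi_k$. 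I would check this bookkeeping weight by weight for all eleven $k$. Since the weights $k$ are all even, $(-12)^{k/2}$ is a rational number, and the translation $u\mapsto u-\varphi_{10}/(4\varphi_4)$ plays no role because the identification is made before that translation.

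The second, substantive step is to show that the rescaled Sakai coefficients actually equal the $C_k$ as orthogonal modular forms, rather than merely having matching shapes. I would use Sakai's uniqueness. Expanded in the Eisenstein generators via \eqref{eq:Sakai-versus-normalized-Eisenstein}, the $C_k$ from \eqref{eq:Hashimoto-Ueda-from-Vinberg} give an eleven-tuple of weighted polynomials. I claim that, after the inverse rescaling, this tuple satisfies the vanishing conditions \eqref{eq:Sakai-vanishing-conditions}. Geometrically this holds because the Hashimoto--Ueda family is the inverse period map. Near the cusp $p\to0$, the K3 surface undergoes the Type~II degeneration whose rescaled limit is the rational elliptic surface \eqref{eq:E-string-SW-curve}. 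Equivalently, the zeroth Fourier--Jacobi coefficients must reduce to the $E_4,E_6$ data of the boundary elliptic curve, and the higher coefficients must vanish to the prescribed orders.

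Rather than argue this abstractly, I would verify it computationally. I would expand the $C_k$ to order $p^6$ in $W(E_8)$-invariant Jacobi forms, write the results in the generators $a_i,b_j$, and check the vanishing conditions \eqref{eq:Sakai-vanishing-conditions}. Since Sakai's linear system has a unique solution, the two tuples must then coincide, which gives \eqref{eq:Sakai-HU-coefficient-comparison}. Once the coefficients agree, the final statement follows directly: Hashimoto--Ueda's theorem identifies their family with the inverse period map on the open moduli locus, so Sakai's equation, being the same family after a weighted change of variables, inherits this property.

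The main obstacle is the verification of the Fourier--Jacobi vanishing for the Hashimoto--Ueda coefficients. This requires reliable Fourier--Jacobi expansions of the Eisenstein series to index $6$, including the Hecke operators $T(m)$ applied to $E_{k-4}\Theta$. As a shortcut, I would first try comparing both tuples directly as polynomials in the eleven Eisenstein generators, using the explicit expressions for the $C_k$ obtained from the Vinberg map. If that comparison succeeds, the coefficient match is simply polynomial identity checking and the uniqueness argument is not needed.
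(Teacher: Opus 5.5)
Your overall strategy matches the paper's. A weighted change of variables gives the formal relation. One then checks that the geometric coefficients satisfy Sakai's conditions; they lie in his ansatz by the Hashimoto--Ueda theorem and the Eisenstein generation result of Dieckmann--Krieg--Woitalla. Uniqueness of his linear system finishes the argument.

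Your rescaling constants, however, are inconsistent. Matching $x^3$ and $t^7$ forces $\alpha^3=\gamma^7$, and the choice $\alpha=\gamma^2$, $\gamma=-12$ violates this. Because $y$ has odd weight $21$, you need $\lambda$ itself, with $\lambda^2=-12$, and the substitution $u=\lambda^{-6}t$, $x_{\mathrm S}=\lambda^{-14}x$, $y_{\mathrm S}=2\lambda^{-21}y$. The weight relations $k+6j=28$ (for the $x$-coefficient) and $k+6j=42$ (for the constant term) then give $C_k=-\tfrac14\lambda^k\varphi_k^{\mathrm S}$ uniformly.

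The real gap is the verification of \eqref{eq:Sakai-vanishing-conditions}. Your computational route needs Fourier--Jacobi expansions of the $C_k$ to index six, and you do not say where these come from. The map $\Theta$ expresses the $C_k$ only through $F,G,H$, so you would still need expansions of those to index six. The Eisenstein expressions in Appendix~\ref{app:Eisenstein-generator-identities} are derived \emph{from} this proposition, so your shortcut is circular. The Type~II/E-string picture is only heuristic and does not give the graded orders $O(p^i)$.

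The translation is also not harmless. Sakai's conditions concern the translated $\widetilde\varphi_k$, and the degeneration is centered at $s=0$, i.e.\ at $t=F_6/7+O(p)$, not at $t=0$. The untranslated $C_k$ do not satisfy the conditions; for example, on the Witt slice $C_{12}\to-21b^2\neq0$.

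The missing idea is to read off the orders from leading-order data in the Vinberg coordinates. At the common cusp of $T_D\subset T_E$ (Lemma~\ref{lem:relative-Jacobian-period-map}) one has:
\begin{itemize}
\item $F_4=-3E_4+O(p)$ and $F_6=2E_6+O(p)$;
\item $G_8,G_{10},G_{12},H_4=O(p)$;
\item $H_{10},\dots,H_{18}=O(p^2)$.
\end{itemize}
By \eqref{eq:alternate-quartic-coefficients}, this gives $A_i(p\sigma)=O(p^2)$ for every $i$. Since $\mathcal I$ and $\mathcal J$ have degrees $2$ and $3$ in the $A_i$, the coefficients of $f=-\mathcal I/3$ and $g=-\mathcal J/27$ in the alternate base coordinate $s$ satisfy $f_j=O(p^{4-j})$ and $g_j=O(p^{6-j})$, with $f_4(0)=-3E_4$ and $g_6(0)=2E_6$.

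The recentering shift $f_3/(4f_4)$ is $O(p)$, so it preserves these bounds. Because $t=s+F_6/7$, it is exactly Sakai's translation $t=S-C_{10}/(4C_4)$ in the rescaled coordinates. After rescaling, the leading terms become $a_0=E_4/12$ and $b_0=E_6/216$. This is precisely \eqref{eq:Sakai-vanishing-conditions}, and it needs only leading-order information rather than an expansion to index six.
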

\begin{proof}
Choose $\lambda\in\mathbb C^\times$ with $\lambda^2=-12$ and make the
weighted change of variables
\begin{equation}
\label{eq:Sakai-HU-coordinate-change}
 t=\lambda^6u,\qquad
 x=\lambda^{14}x_{\mathrm S},\qquad
 y=\frac{\lambda^{21}}{2}y_{\mathrm S}.
\end{equation}
Dividing~\eqref{eq:Sakai-K3-curve} first by $4$ and then applying
\eqref{eq:Sakai-HU-coordinate-change} gives a monic equation with leading
term $t^7$ and sends every coefficient of weight $k$ to
$-\tfrac14\lambda^k\varphi_k^{\mathrm S}$. Since
$\lambda^k=(-12)^{k/2}$, this proves the formal coordinate relation in
\eqref{eq:Sakai-HU-coefficient-comparison}.

It remains to identify the modular forms. Use the compatible period coordinates
of Lemma~\ref{lem:relative-Jacobian-period-map}, with the common cusp
coming from the displayed $U^{\oplus2}$ in $T_D\subset T_E$, and put
$p=e^{2\pi i\rho}$. The geometric relative-Jacobian coefficients are
holomorphic modular forms of the indicated weights for
$\mathrm O^+(T_E)$ by the period identification and the
Hashimoto--Ueda theorem. The fact that the graded ring of modular forms is generated by Eisenstein series 
\cite[Theorem~4.3]{DieckmannKriegWoitalla} places the coefficients in Sakai's
weighted-polynomial ansatz.
In the normalization of the Vinberg coefficient forms,
\cite[Section~5 and Corollary~5.1]{ClingherMalmendierWilliams2026} gives
\begin{equation*}
\begin{aligned}
 F_4&=-3E_4+O(p),& F_6&=2E_6+O(p),\\
 G_8,G_{10},G_{12},H_4&=O(p),&
 H_{10},\ldots,H_{18}&=O(p^2).
\end{aligned}
\end{equation*}
Here the assertion for $H_4$ follows from $H_8=-H_4^2=O(p^2)$.
Equation~\eqref{eq:alternate-quartic-coefficients} now shows that
$A_i(pS)=O(p^2)$ for every $i$. Since the binary-quartic invariants
are homogeneous of degrees two and three in the $A_i$, respectively,
writing
\begin{equation*}
 f(s):=-\frac13\mathcal I(s)=\sum_{j=0}^4f_j(p)s^j,
 \qquad
 g(s):=-\frac1{27}\mathcal J(s)=s^7+\sum_{j=0}^6g_j(p)s^j
\end{equation*}
gives
\begin{equation*}
\begin{aligned}
 f_j(p)&=O(p^{4-j})\quad(0\leq j\leq4),& f_4(0)&=-3E_4,\\
 g_j(p)&=O(p^{6-j})\quad(0\leq j\leq6),& g_6(0)&=2E_6.
\end{aligned}
\end{equation*}
Indeed, at $p=0$ the equation is
$y^2=x^3+F_4s^4x+s^6(s+F_6)$.
On the dense open set $E_4\ne0$, the translation
$s=S-f_3/(4f_4)$ has shift $O(p)$, eliminates the cubic term in $f$,
and preserves all of the above order bounds. Since $t=s+F_6/7$, this is
exactly $t=S-C_{10}/(4C_4)$, the translation used by Sakai after
the weighted coordinate change. In Sakai's scale, the leading
quartic and sextic coefficients are consequently
\begin{equation*}
 -4\lambda^{-4}(-3E_4)=\frac{E_4}{12}=a_0,
 \qquad
 -4\lambda^{-6}(2E_6)=\frac{E_6}{216}=b_0.
\end{equation*}
The geometric coefficients therefore satisfy all of
\eqref{eq:Sakai-vanishing-conditions}. The uniqueness of Sakai's
solution to these conditions \cite[Section~5]{SakaiFTheoryK3} proves
\eqref{eq:Sakai-HU-coefficient-comparison}; equality extends over the
whole period domain by holomorphy. 

Substituting \eqref{eq:Sakai-versus-normalized-Eisenstein} into
Sakai's Appendix~A gives the explicit Eisenstein expressions.
For example, the first four identities become
\begin{equation*}
\begin{aligned}
C_4&=-3\mathcal E_4,
&C_{10}&=\frac{24}{7}\mathcal E_{10},\\
C_{12}&=\frac{378}{125}\mathcal E_4^3
       -\frac{4146}{875}\mathcal E_{12},
&C_{16}&=-\frac{10557}{3125}\mathcal E_4^4
 +\frac{3756276}{284375}\mathcal E_4\mathcal E_{12}
 -\frac{22494123}{1990625}\mathcal E_{16}.
\end{aligned}
\end{equation*}
The complete list of eleven identities is given in
Appendix~\ref{app:Eisenstein-generator-identities}.
Hashimoto--Ueda's weighted coefficient space, with the non-K3 locus
removed, is the coarse moduli space of the corresponding
$U\oplus E_8(-1)$-polarized K3 surfaces. Hence equality of the weighted
coefficient maps identifies Sakai's map with the geometric inverse
period map, up to the coordinate scaling and the finite marking
ambiguity inherent in the coarse quotient.
\end{proof}
\section{Self-mirror fibrations and Type II boundary components}
\label{sec:self-mirror-boundary}
The two genus-one fibrations used in the transition from Vinberg's
$U\oplus D_8(-1)$-polarized family to the Hashimoto--Ueda
$U\oplus E_8(-1)$-polarized family also have an interpretation in
Dolgachev's lattice-theoretic mirror symmetry for K3 surfaces. 
\subsection{The mirror lattice at a cusp}
For a primitive hyperbolic lattice embedding into the K3 lattice $M\subset\Lambda_{\mathrm{K3}}$,
Dolgachev's mirror symmetry procedure starts with a choice of a primitive isotropic vector of divisibility one in
$M^\perp$. This results in a splitting of a hyperbolic plane $M^\perp\cong U\oplus\check M$, with the remaining summand 
$\check M$ viewed as the {\it mirror lattice}. The construction depends on the choice of isotropic vector. This vector also determines a zero-dimensional {\it boundary component} (cusp) in the Baily--Borel compactification of the arithmetic period quotient associated to $M^{\perp}$.   
\par For the rank-ten
family considered here, the polarizing lattice $M$ has the special property that, for an appropriate choice of Baily-Borel cusp, the Dolgachev mirror lattice $\check{M}$ is isomorphic to the original $M$.  Let us then set
\begin{equation}
\label{eq:MD-TD-self-mirror}
 M_D=U\oplus D_8(-1),
 \qquad
 T_D:=M_D^\perp\subset\Lambda_{\mathrm{K3}},
 \qquad
 T_D\cong U_0\oplus M_D.
\end{equation}
Thus $M_D$ is its own Dolgachev mirror after the copy $U_0$
has been chosen \cite{DolgachevMirror}. For the boundary-orbit count
below, we use the full orthogonal quotient
$\mathrm O^+(T_D)\backslash\mathcal D(T_D)$.
Fix a primitive isotropic vector $e\in U_0$. Then $\mathbb Qe$
determines a zero-dimensional cusp in its Baily--Borel compactification.  
A one-dimensional Baily--Borel boundary component incident to this cusp
is represented by a primitive isotropic plane $I\subset T_D$ containing
$e$. Modulo $\mathbb Ze$, such a plane determines a primitive isotropic
line $\mathbb Zf\subset e^\perp/\mathbb Ze\cong\check M\cong M_D$.
After choosing a nef representative of $f$, this gives a genus-one
fibration on the mirror K3 surface. In our self-mirror family, the
lattice-isometry types are described by the following lemma:
\begin{lemma}
\label{lem:MD-two-isotropic-orbits}
The primitive isotropic vectors in $L=U\oplus D_8(-1)$ form exactly
two orbits under $\mathrm O(L)$, distinguished by divisibility one and
two. Their orthogonal quotients are $D_8(-1)$ and $E_8(-1)$,
respectively.
\end{lemma}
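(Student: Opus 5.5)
Let $v\in L=U\oplus D_8(-1)$ be primitive isotropic. The plan has three steps: compute the possible divisibilities, construct one vector of each type with its orthogonal quotient, and then prove transitivity within each divisibility class.

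\emph{Divisibilities.} The divisibility $d=\operatorname{div}(v)$ is defined by $(v,L)=d\mathbb Z$. Then $v/d\in L^\vee$ has order $d$ in the discriminant group $A_L\cong A_{D_8}\cong(\mathbb Z/2)^2$. Hence $d\in\{1,2\}$.

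\emph{Examples and orthogonal quotients.}
\begin{itemize}
\item Divisibility one: take $e\in U$. Then $e^\perp/\mathbb Ze\cong D_8(-1)$.
\item Divisibility two: use the identity $U\oplus D_8(-1)\cong U(2)\oplus E_8(-1)$ from the chain \eqref{eq:AN-chain}, and take $e'$ an isotropic generator of the $U(2)$ summand. Its divisibility is $2$, and $e'^\perp/\mathbb Ze'\cong E_8(-1)$.
\end{itemize}
Concretely, in $U\oplus D_8(-1)$ one can take $v=2e+f+w$ with $w\in D_8(-1)$, $w^2=-4$. Then $v^2=4-4=0$, and $(v,L)\subseteq 2\mathbb Z$ provided $w$ has even pairing with $D_8$. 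That holds for $w=2\epsilon_1$, but $2\epsilon_1$ is not primitive, so this candidate has to be rechecked. The cleaner route is the $U(2)$-presentation.

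\emph{General fact about the orthogonal quotient.} For any primitive isotropic $v$ of divisibility $d$, the lattice $K=v^\perp/\mathbb Zv$ is even negative definite of rank $8$ with $|A_K|=|A_L|/d^2$ (standard: $L$ embeds with finite index in $U(d)\oplus K$ or equivalent). So:
\begin{itemize}
\item $d=1$ gives $\det K=4$, and $L\cong U\oplus K$ by splitting off the hyperbolic plane spanned by $v$ and some $u$ with $(v,u)=1$.
\item $d=2$ gives $K$ unimodular, so $K\cong E_8(-1)$.
\end{itemize}
For $d=1$, $K$ has discriminant form $q_K\cong q_L=q_{D_8}$. Negative-definite even rank-$8$ lattices of determinant $4$ with discriminant group $(\mathbb Z/2)^2$ and this form are exactly $D_8(-1)$: the genus consists of a single class, e.g.\ by the Conway--Sloane mass or the classification of index-two sublattices of $E_8$ with this discriminant. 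Alternatively, $K\oplus U\cong D_8(-1)\oplus U$ together with Nikulin's uniqueness already shows $K$ is in the genus of $D_8(-1)$, and then one checks that the genus has class number one.

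\emph{Transitivity.} It remains to show transitivity within each divisibility. This is the main obstacle.
\begin{itemize}
\item For $d=1$: $L=\langle v,u\rangle\oplus K$, and after adjusting $u$ by $u\mapsto u-\tfrac{u^2}{2}v$ the plane $\langle v,u\rangle$ is a copy of $U$. Any two such decompositions with $K\cong D_8(-1)$ are related by an isometry of $L$ sending $v$ to $v'$.
\item For $d=2$: choose $u$ with $(v,u)=2$ and normalize to get $L\supset U(2)\oplus K$, with index analysis showing equality since both have determinant $4$. Thus $L=U(2)\oplus E_8(-1)$ with $v$ a standard generator, and again any two such presentations are isometric compatibly with $v$.
\end{itemize}
The first thing to try is exactly this normalization of $u$ modulo $v$, to show the $U(2)$ or $U$ summand genuinely splits. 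If that fails, fall back on Eichler's criterion: $L$ contains two copies of $U$ after stabilization. Eichler's criterion applies directly to $U^{\oplus 2}\oplus D_8(-1)$, but for $L$ itself it is cleaner to use the splitting argument above.
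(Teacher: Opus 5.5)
Your computer-free plan is a legitimate alternative to the paper's argument. The paper instead gets ``at most two orbits'' from the two ideal vertices of the Coxeter simplex produced by Vinberg's algorithm, and splits off $U(2)$ explicitly by reflecting $f_2$ in $r_8$. Your divisibility-one case is complete.

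\textbf{The gap.} It is the divisibility-two transitivity, which is exactly where the content of the lemma lies.
\begin{itemize}
\item For an arbitrary $u$ with $(v,u)=2$, the normalization $u\mapsto u-cv$ changes $u^2$ only by multiples of $4$, so it fails when $u^2\equiv2\pmod4$.
\item Even when $\langle v,u\rangle\cong U(2)$, the complement $\langle v,u\rangle^\perp$ need not map onto $v^\perp/\mathbb Zv$. So comparing determinants does not force $\langle v,u\rangle\oplus\langle v,u\rangle^\perp=L$.
\end{itemize}
Both failures already occur in $L=U(2)\oplus E_8(-1)$ with $v=e$ and $(e,f)=2$. For $u=f+r$ with $r$ a root, $u^2=-2$. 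For $u=e+f+w$ with $w^2=-4$, you get $\langle v,u\rangle\cong U(2)$, but $\langle v,u\rangle^\perp\cong D_8(-1)$, which has index two in $L$. Your fallback does not apply either: Eichler's criterion needs two hyperbolic planes, which is impossible in signature $(1,9)$.

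\textbf{The missing idea.} Split in the opposite order.
\begin{itemize}
\item Lift $v^\perp/\mathbb Zv\cong E_8(-1)$ to a sublattice $K'\subset v^\perp$ complementary to $\mathbb Zv$. It is unimodular, so $L=K'\oplus N$ with $N$ even of rank two, $\det N=-4$, and $v\in N$.
\item Completing $v$ to a basis $v,u$ of $N$ forces $(v,u)=\pm2$.
\item If $u^2\equiv2\pmod4$, then $N\cong\langle2\rangle\oplus\langle-2\rangle$. Its discriminant form takes the value $\tfrac12$, contradicting $q_N=q_L=q_{D_8(-1)}$, whose values are $0,0,1$.
\item Hence $u^2\equiv0\pmod4$, and $N\cong U(2)$ with $v$ a standard generator. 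Uniqueness of $E_8(-1)$ then gives transitivity.
\end{itemize}

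\textbf{Two smaller points.} First, your explicit candidate $2e+f+w$ has divisibility one, since it pairs to $1$ with $e$. Second, the paper derives $U\oplus D_8(-1)\cong U(2)\oplus E_8(-1)$ from this very lemma, so you need an independent reason that a divisibility-two vector exists. Either invoke Nikulin's uniqueness theorem (equal signature and discriminant form), or use an explicit vector $2p+2q+w$ with $w\in D_8(-1)$ twice a spinor weight. Then $w^2=-8$ and all pairings of $w$ with $D_8(-1)$ are even, as in the paper's $f_2$.
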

\begin{proof}
Choose a basis $(p,q,d_1,\ldots,d_8)$, where $p^2=q^2=0$,
$(p,q)=1$, and the $d_i$ have the negative $D_8$ Cartan matrix:
the chain is $d_1,\ldots,d_6$, with $d_7,d_8$ attached to $d_6$.
Vinberg's algorithm, as implemented in VinAl
\cite{BogachevPerepechko2018}, gives the ten facet normals
\begin{equation*}
\begin{aligned}
r_i&=-d_i\quad(1\leq i\leq7), & r_8&=d_7-d_8,\\
r_9&=p-q, & r_{10}&=q+d_1+2d_2+2d_3+2d_4+2d_5+2d_6+d_7+d_8.
\end{aligned}
\end{equation*}
The output can be checked directly. All ten reflections preserve $L$;
$r_8^2=-4$ and the other normals have square $-2$.
Their nonright dihedral angles are $\pi/3$ and $\pi/4$.
The dual vectors defined by $(z_i,r_j)=-\delta_{ij}$ lie in the closure
of the same positive cone. Their squared norms, in order, are
\begin{equation*}
(z_1^2,\ldots,z_{10}^2)=(1,6,5,4,3,2,1,0,0,2).
\end{equation*}
Thus the chamber is a finite-volume Coxeter simplex with ten facets,
eight finite vertices, and two ideal vertices. The ideal rays are
represented, in the chosen basis, by
\begin{equation*}
\begin{aligned}
v_1&=(0,-1,0,0,0,0,0,0,0,0),\\
v_2&=(-1,-1,-\tfrac12,-1,-\tfrac32,-2,-\tfrac52,-3,-\tfrac32,-2).
\end{aligned}
\end{equation*}
The reflection group therefore has two cusp orbits in the chosen
positive cone, so $\mathrm O(L)$ has at most two primitive isotropic
orbits. The primitive integral generators are $f_1=v_1$ and
$f_2=2v_2$; direct pairing with the basis gives
$\operatorname{div}(f_1)=1$, $\operatorname{div}(f_2)=2$, and
$(f_1,f_2)=2$. Divisibility prevents their identification, while
$-\mathrm{id}$ identifies opposite rays. Hence there are exactly
two orbits. The first quotient is visibly $D_8(-1)$.
For the second, reflection in $r_8$ sends $f_2$ to $g=f_2-r_8$.
Both have divisibility two, and $(f_2,g)=2$, so their span splits off
integrally as $U(2)$. Its complement is even, negative definite of
rank eight, and unimodular, and is therefore $E_8(-1)$. This also proves
the alternative presentation $L\cong U(2)\oplus E_8(-1)$.
\end{proof}

\par Let $X$ be very general, so that $\operatorname{NS}(X)=M_D$.
For the orbit of divisibility one, the fiber and zero-section classes span
a copy of $U$ in $M_D$. Their orthogonal complement is
$D_8(-1)$, which is generated by the components
of the distinguished $\mathrm I_4^*$ fiber that do not meet the zero
section. The Shioda--Tate formula gives Mordell--Weil rank zero, and comparing
discriminants shows that the torsion subgroup is trivial. This is the
\emph{standard} Weierstrass model~\eqref{eq:H+D8-weierstrass_intro};
generically it has fibers $\mathrm I_4^*+14\mathrm I_1$ and trivial Mordell--Weil group. 
\par For the orbit of divisibility two, the fiber class $f$ has divisibility two in
$\operatorname{NS}(X)$. The old fiber has intersection two with $f$
and gives a bisection, while a section would contradict
$\operatorname{div}(f)=2$. The multisection index is therefore two,
and the generic fiber is a torsor of order two under its Jacobian
\cite{MeinsmaShinder}. In Proposition~\ref{prop:alt_fib}
we will construct the torsor explicitly, as the \emph{alternate} genus-one fibration given by the binary-quartic family
\begin{equation}
\label{eq:mirror-alternate-quartic}
 w^2=A_0(s)u^4+A_1(s)u^3+A_2(s)u^2+A_3(s)u+A_4(s).
\end{equation}
It has a bisection and generically no section. 
The relative Jacobian~\eqref{eq:relative-jacobian-IJ} has a section and contains $U\oplus E_8(-1)$ in its
N\'eron--Severi lattice; generically its distinguished singular fiber is $\mathrm{II}^*$. 
Thus its relative Jacobian is the Hashimoto--Ueda fibration.
\par We have the following:
\begin{proposition}
\label{thm:mirror-boundary-fibration}
Fix the cusp $\mathbb Qe$ and the full orthogonal quotient as above.
The two $\mathrm O(M_D)$-orbits of primitive isotropic vectors correspond
to two distinct one-dimensional Baily--Borel boundary components
incident to this cusp. On a very general $M_D$-polarized K3 surface
$X$, their nef representatives give the following types of genus-one fibrations: 
\begin{equation}
\label{eq:two-isotropic-orbits}
\begin{array}{c|c|c|c}
\operatorname{div}(f)&\text{presentation of }M_D
&f^\perp/\mathbb Zf&\text{fibration on }X\\ \hline
1&U\oplus D_8(-1)&D_8(-1)&\text{Jacobian elliptic}\rule{0pt}{2.5ex}\\
2&U(2)\oplus E_8(-1)&E_8(-1)&\text{index-two genus one}.
\end{array}
\end{equation}
The divisibility-one fibration is the standard elliptic fibration
\eqref{eq:H+D8-weierstrass_intro}; the
divisibility-two fibration is the alternate genus-one fibration
\eqref{eq:mirror-alternate-quartic}.
\end{proposition}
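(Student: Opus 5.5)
The plan is to reduce everything to Lemma~\ref{lem:MD-two-isotropic-orbits} together with the standard description of Baily--Borel boundary components. The one-dimensional boundary components incident to the cusp $\mathbb Qe$ are in bijection with the orbits of primitive isotropic planes $I\subset T_D$ containing $e$, modulo the stabilizer $\mathrm{Stab}(e)\subset\mathrm O^+(T_D)$.

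First, I will describe this stabilizer concretely. Write $T_D=U_0\oplus M_D$ with $e,e'$ the standard basis of $U_0$. Any plane $I\ni e$ is of the form $\mathbb Ze\oplus\mathbb Z\tilde f$. Its image $\mathbb Zf$ in $e^\perp/\mathbb Ze\cong M_D$ is primitive isotropic, and $I$ is primitive if and only if $f$ is. This gives a map
\[
\{I\ni e\}\longrightarrow\{\pm f \text{ primitive isotropic in } M_D\}.
\]
Since $e$ has divisibility one, the stabilizer of $e$ surjects onto $\mathrm O(M_D)$; the Levi part acts as $\mathrm O(M_D)$ on the complement, with a sign adjustment to stay in $\mathrm O^+$. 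Its unipotent radical consists of Eichler transvections $E(e,m)$, $m\in M_D$. These act trivially on $e^\perp/\mathbb Ze$, and they move a lift $\tilde f$ by multiples of $e$, so all lifts of a fixed $f$ are identified. Hence the boundary components through $\mathbb Qe$ are in bijection with $\mathrm O(M_D)$-orbits of primitive isotropic lines in $M_D$. Since $-\mathrm{id}\in\mathrm O(M_D)$, these are the same as orbits of primitive isotropic vectors. By Lemma~\ref{lem:MD-two-isotropic-orbits} there are exactly two, distinguished by divisibility. This gives the two distinct boundary components and the column $f^\perp/\mathbb Zf$, with the presentations $U\oplus D_8(-1)$ and $U(2)\oplus E_8(-1)$ also taken from the lemma.

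For the geometric column, take $X$ very general with $\operatorname{NS}(X)=M_D$. Each isotropic orbit meets the closure of the nef cone: apply the Weyl group of $(-2)$-reflections together with $\pm1$ to move $f$ into the nef cone. Reflections preserve divisibility, so the orbit type is unchanged. A primitive nef isotropic class on a K3 surface is the class of an elliptic pencil $|f|$, by Piatetski-Shapiro--Shafarevich. The multisection index of this pencil is $\operatorname{div}(f)$ in $\operatorname{NS}(X)$, since $\min\{(f,D)>0\}$ equals the divisibility.
\begin{itemize}
\item For divisibility one, there is a class $D$ with $(f,D)=1$, so the pencil has a section and is Jacobian elliptic. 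The trivial lattice gives $f^\perp/\mathbb Zf\cong D_8(-1)$, which matches the discussion preceding the proposition: fibers $\mathrm I_4^*+14\mathrm I_1$ and Mordell--Weil group trivial. The uniqueness of the divisibility-one orbit then identifies it with the standard fibration \eqref{eq:H+D8-weierstrass_intro}.
\item For divisibility two, there is no section, and the old fiber class $f_1$ with $(f_1,f_2)=2$ is a bisection. The fibration is therefore an index-two genus-one fibration. Since that orbit is also unique, it coincides with the alternate quartic \eqref{eq:mirror-alternate-quartic} once Proposition~\ref{prop:alt_fib} shows that the latter's fiber class has divisibility two. To check this, compute its fiber class in $\operatorname{NS}(X)$ and its intersection with the standard fiber: a bisection of the quartic model gives intersection two, and the absence of a section forces divisibility two.
\end{itemize}

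The main obstacle is the identification step with Proposition~\ref{prop:alt_fib}, i.e.\ verifying that the explicit quartic's fiber class is primitive of divisibility two rather than twice a divisibility-one class. I would handle it by exhibiting the reducible fibers of the quartic model, which should show the $E_8$ configuration in $f^\perp/\mathbb Zf$. A secondary subtlety is the $\mathrm O^+$ versus $\mathrm O$ bookkeeping in the stabilizer surjection, which is settled by the fact that $-\mathrm{id}_{U_0}\oplus g$ corrects the spinor/orientation sign.
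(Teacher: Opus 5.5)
Your proposal is correct and follows the paper's proof essentially step for step: planes through $e$ are projected to primitive isotropic lines in $e^\perp/\mathbb Ze\cong M_D$, the cusp stabilizer induces all of $\mathrm O(M_D)$, Lemma~\ref{lem:MD-two-isotropic-orbits} gives the two orbits, nef representatives come from sign changes and $(-2)$-reflections, the index equals the divisibility, and the two orbits are matched with the two explicit models. Two points should be tightened. First, your stabilizer analysis only shows that the incident boundary components are images of at most two stabilizer orbits, so the claimed bijection is not yet justified; their distinctness should be drawn, as the paper does, from the $\mathrm O^+(T_D)$-invariant $I^\perp/I\cong f^\perp/\mathbb Zf$, using $D_8(-1)\not\cong E_8(-1)$. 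Second, primitivity of the quartic's fiber class is automatic for a pencil with connected fibers, so the real input behind your ``main obstacle'' is the no-section discriminant argument of Proposition~\ref{prop:alt_fib}.
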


\begin{proof}
Rational boundary components of a type-IV quotient are indexed by
arithmetic orbits of isotropic lines and planes: the former give
zero-dimensional cusps and the latter one-dimensional Type~II
components \cite{Scattone}. An isotropic plane incident to
$\mathbb Qe$ projects through $e^\perp/\mathbb Ze\cong M_D$ to a
primitive isotropic line $\mathbb Zf\subset M_D$. The cusp stabilizer
in the full orthogonal group induces the full isometry action on
these lines, using the splitting $T_D=U_0\oplus M_D$.
Lemma~\ref{lem:MD-two-isotropic-orbits} therefore gives exactly two
incident boundary components, which are distinguished by their nonisometric quotients
$I^\perp/I$. After changing sign if necessary and
acting by the $(-2)$-reflection group, $f$ may be taken nef. A primitive nef
class of square zero on a K3 surface defines a genus-one fibration, and
the divisibility of $f$ is its index. Thus divisibility one gives a
section, whereas divisibility two gives a bisection and no section on the
very general member. Finally,
\begin{equation*}
 U\oplus D_8(-1)\cong U(2)\oplus E_8(-1),
\end{equation*}
and the fiber vectors in the two displayed decompositions represent
the two orbits of Lemma~\ref{lem:MD-two-isotropic-orbits}.
Taking the orthogonal quotient by the corresponding
fiber class gives $D_8(-1)$ and $E_8(-1)$, respectively. The asserted
Kodaira configurations and Mordell--Weil groups follow from the explicit
models and the Shioda--Tate formula; see
Sections~\ref{sec:Vinberg} and~\ref{sec:Hashimoto-Ueda}.
\end{proof}
\subsection{Type II stable limits}
We next relate the Baily--Borel labels to geometric Type~II limits. A
Kulikov model of Type~II has smooth elliptic double curves. In an elliptic
stable degeneration one can encounter a central fiber of the form
\begin{equation}
\label{eq:type-II-two-RES}
 X_0=R_1\cup_E R_2,
\end{equation}
where $R_1$ and $R_2$ are rational elliptic surfaces meeting along a
common smooth fiber $E$;  see
\cite{KulikovDegenerations,PerssonPinkham,FriedmanSmoothings,ClingherMorganFTheory}.
 The Baily--Borel compactification retains the modular parameter of the
elliptic double curve $E$ but contracts the additional extension data
present in a toroidal compactification. 
\par We will compute the following specializations of the standard and alternate pencils
directly from the two irreducible components  of the non-K3 locus computed in
\cite[Section~2.4 and Proposition~2.15]{ClingherMalmendierWilliams2026}.
To distinguish these curves from the modular coefficients $C_k$, we
write
\begin{equation*}
 \mathcal C_D:=C_1,
 \qquad
 \mathcal C_E:=C_2
\end{equation*}
for the components denoted by $C_1$ and $C_2$ there.   The subscripts
will be explained below.  The residual surfaces after specialization will be 
extremal rational elliptic surfaces with fiber root lattices $E_8$ and $D_8$, respectively
\cite{MirandaEllipticSurfaces,MirandaPerssonExtremal,PerssonConfigurations}. For any rational elliptic surface $R$ with section and fiber class $F_R$, one has $F_R^\perp/\mathbb ZF_R\cong E_8(-1)$ in its N\'eron--Severi lattice \cite{MirandaEllipticSurfaces}, and the labels do \emph{not}
refer to the fiber root lattices of the rational elliptic surfaces. Varying a chosen smooth fiber on either surface supplies an elliptic modulus; identifying this with boundary data requires the chosen marking and degeneration.
\par The first component of the non-K3 locus is
\begin{equation}
\label{eq:CE-boundary-locus}
 \mathcal C_D
 =V\bigl(G_8,G_{10},G_{12},H_8,H_{10},H_{12},H_{14},H_{16},H_{18}\bigr)
 \cong\mathbb P(4,6),
\end{equation}
with coordinates $[F_4:F_6]$; this is Equation~(2.16) of
\cite{ClingherMalmendierWilliams2026}.  The second component
$\mathcal C_E$ is defined by the seventeen weighted
homogeneous generators displayed in
\cite[Lemma~2.13 and Appendix~B, Equations~(B.1)--(B.4)]{ClingherMalmendierWilliams2026}.
On the character cover on which $H_4$ is defined, a parametrization by $[\alpha:F_4]\in\mathbb P(2,4)$ is
\begin{equation}
\label{eq:CD-boundary-parametrization-I}
\begin{aligned}
 H_4&=\alpha^2+\frac13F_4,
&F_6&=10\alpha^3+4\alpha F_4,\\
 G_8&=\frac13F_4^2-15\alpha^4-4\alpha^2F_4,
&G_{10}&=-12\alpha^5+4\alpha^3F_4+\frac83\alpha F_4^2,\\
 G_{12}&=30\alpha^6+26\alpha^4F_4+\frac{16}{3}\alpha^2F_4^2,
&H_8&=-\alpha^4-\frac23\alpha^2F_4-\frac19F_4^2,\\
 H_{10}&=-8\alpha^5-\frac{16}{3}\alpha^3F_4-\frac89\alpha F_4^2.
\end{aligned}
\end{equation}
The remaining coefficients $H_{12}, \dots, H_{18}$ are given in \cite[Equation~(2.19)]{ClingherMalmendierWilliams2026}; in particular,
we have $H_8=-H_4^2$.
The boundary lattice labels are then determined as follows:  
at the standard one-dimensional cusp of
$T_D=2U\oplus D_8(-1)$, the Fourier--Jacobi expansions of
$F_4,F_6$, of $G_8,G_{10},G_{12}$, and of $H_8,\ldots,H_{18}$ begin
in orders zero, one, and two, respectively
\cite[Corollary~5.1]{ClingherMalmendierWilliams2026}.
Thus, the boundary value lies on $\mathcal C_D$, with coordinates
$[F_4:F_6]$, and $\mathcal C_D$ has orthogonal quotient $D_8(-1)$.
The other boundary curve $\mathcal C_E$ has quotient $E_8(-1)$.
That is, the labels refer to the quotients $I^\perp_{T_D}/I$ of the period lattice.
\par Starting with $\mathcal C_E$, we convert the standard fibration~\eqref{eq:H+D8-weierstrass_intro} to
short Weierstrass form and substitute
\eqref{eq:CD-boundary-parametrization-I}.  One obtains
\begin{equation}
\label{eq:CD-short-Weierstrass-specialization}
\begin{aligned}
Y^2&=X^3
-\frac13(u+\alpha)^4
 \bigl(u^2-4\alpha u+10\alpha^2+2F_4\bigr)X\\
&-\frac2{27}(2\alpha-u)(u+\alpha)^6
 \bigl(u^2-4\alpha u+13\alpha^2+3F_4\bigr).
\end{aligned}
\end{equation}
For general $[\alpha:F_4]$, the Weierstrass equation of the standard
fibration acquires a non-minimal $(4,6,12)$ point at $u=-\alpha$ on
the base.
We set $d=u+\alpha$, and the substitution
\begin{equation}
\label{eq:CD-4612-removal}
 X=d^2X_1,
 \qquad
 Y=d^3Y_1
\end{equation}
removes the common $(4,6,12)$ factor and produces the rational elliptic
surface. We make the constant rescaling $Z=3X_1$, $\eta=3\sqrt{3}\,Y_1$, 
followed by the translation $Z=\xi+u-2\alpha$.  The resulting rational 
elliptic surface is
\begin{equation}
\label{eq:CD-residual-surface-explicit}
 R:\quad
 \eta^2=\xi^3+3(u-2\alpha)\xi^2
              -6(3\alpha^2+F_4)\xi.
\end{equation}
After setting
\begin{equation*}
 v=3u,
 \qquad
 a=-6\alpha,
 \qquad
 b=-6(3\alpha^2+F_4),
\end{equation*}
this becomes the rational elliptic surface
\begin{equation}
\label{eq:D8-residual-RES}
 R:\quad y^2=x^3+(v+a)x^2+b x,
 \qquad
 \Delta=16b^2\bigl((v+a)^2-4b\bigr),
\end{equation}
with $b\ne0$. For general $[\alpha:F_4]$, the singular fibers are $\mathrm I_4^*+2\mathrm I_1$ and
$\operatorname{MW}(R)\cong\mathbb Z/2\mathbb Z$.
The section $(x,y)=(0,0)$ generates the order-two Mordell--Weil group.
Thus the standard pencil keeps the $\mathrm I_4^*$
fiber in the residual equation~\eqref{eq:D8-residual-RES}; see 
\cite[Remark~2.14]{ClingherMalmendierWilliams2026}.
\par Next we consider $\mathcal C_D$. Substitution
of \eqref{eq:CE-boundary-locus} into the standard Weierstrass equation gives
\begin{equation}
\label{eq:CE-standard-nonisolated}
 y^2=x^2\bigl(x+u^3+F_4u+F_6\bigr),
\end{equation}
which is singular along the curve $x=y=0$; see
\cite[Lemma~2.11 and Remark~2.12]{ClingherMalmendierWilliams2026}.
The original projection to the $u$-line has singular generic fiber.
Instead, regard $x$ as the base coordinate and put $Y=y/x$ to remove the square factor. 
This gives
\begin{equation}
\label{eq:CE-residual-surface-explicit}
 Y^2=u^3+F_4u+(x+F_6)
\end{equation}
as the residual rational elliptic surface. In the monic Weierstrass model
$V^2=U^3+F_4x^4U+x^6(x+F_6)$, with $U=x^2u$ and $V=x^2y$,
this removes a non-minimal $(4,6,12)$ point at $x=0$ for general parameters.
On $\mathcal C_D$ one has $H_4=0$, so the alternate base coordinate
$s=x-H_4u$ specializes to $s=x$. Thus the change of ruling above is
precisely the specialization of the alternate pencil. With
$s=t-F_6/7$, its equation factors as
\begin{equation}
\label{eq:CE-alternate-quartic-factorization}
 w^2=\frac1{343}(7t-F_6)^2
 \bigl(7u^3+7F_4u+7t+6F_6\bigr).
\end{equation}
Removing the square factor by
$y=7w/(7t-F_6)$ gives the same surface, with $x=t-F_6/7$.
As an elliptic surface over the $v$-line, we set
$A=F_4$, $v=x$, $B=F_6$ and rename the fiber coordinates $(u,Y)$ as $(x,y)$ to obtain
\begin{equation}
\label{eq:E8-residual-RES}
 R:\quad y^2=x^3+A x+(v+B),
 \qquad
 \Delta=-16\bigl(4A^3+27(v+B)^2\bigr).
\end{equation}
The general member has fibers $\mathrm{II}^*+2\mathrm I_1$ and trivial Mordell--Weil group.

\par With $f_{\mathrm{src}}$ denoting the
fiber class of the pencil being specialized, we have obtained the following residual rational elliptic surfaces:
\begin{equation}
\label{eq:type-II-RES-table}
\begin{array}{c|c|c|c|c}
\operatorname{div}(f_{\mathrm{src}})&\text{coefficient curve}
 &\text{pencil specialized}&\mathrm{Fib}(R)&\operatorname{MW}(R)\\ \hline
1&\mathcal C_E&\text{standard}&\mathrm I_4^*+2\mathrm I_1
 &\mathbb Z/2\mathbb Z\\
2&\mathcal C_D&\text{alternate}&\mathrm{II}^*+2\mathrm I_1&0
\end{array}
\end{equation}
Note that Proposition~\ref{thm:mirror-boundary-fibration} instead associates a
boundary plane $I\subset T_D$ with a fiber class on the mirror K3
surface. Self-mirror lattice isometry does not identify that mirror
fiber class with $f_{\mathrm{src}}$ in the specialization table.
\par This construction is compatible with its F-theory interpretation. An
elliptically fibered K3 surface describes a type-IIB background on
$\mathbb P^1$ with $24$ units of seven-brane charge. The stable
degeneration~\eqref{eq:type-II-two-RES} separates the background into two
twelve-brane blocks, each carried by a rational elliptic surface, and the
common fiber $E$ is the elliptic curve of the dual heterotic torus
\cite{ClingherMorganFTheory,MorrisonVafaI,MorrisonVafaII,VafaFTheory}.
The boundary labels $D_8(-1)$ and $E_8(-1)$ encode the two mirror
fibration types; the direct specializations of the original pencils
are those in~\eqref{eq:type-II-RES-table}.
These extractions do not specify both twelve-brane blocks of a polarized stable degeneration. On the mirror K3 side, the two lattice labels are realized by the
standard Jacobian fibration and the alternate index-two fibration,
whose relative Jacobian is of Hashimoto--Ueda type. This is the boundary counterpart
of the neighbor construction carried out in the next
section, and it is compatible with the CHL interpretation of the
$U(2)$-polarized genus-one model \cite{ClingherMalmendierCHL}.
\section{From Vinberg's construction to the Weierstrass model}
\label{sec:Vinberg}
We will now describe the explicit relation between Vinberg's projective model of the rank-ten family and the standard and alternate genus-one fibrations supported on the family of K3 surfaces polarized by the lattice $U\oplus D_8(-1)$. The geometric construction can be summarized as follows:
\begin{equation}
\label{eq:vinberg-HU-explicit-sequence}
Y_{\mathrm V}^{\min}
\dashrightarrow X_{F,G,H}
\dashrightarrow \mathcal C/\mathbb{P}^1_{(s)}
\xrightarrow{\operatorname{Jac}}X_{\mathrm{HU}}.
\end{equation}
The first arrow identifies Vinberg's scroll model in \cite{Vinberg2018} with \eqref{eq:H+D8-weierstrass}; the second changes the genus-one pencil on the same K3 surface; the last takes the relative Jacobian. Hashimoto and Ueda showed that the ring of scalar modular forms corresponding to the latter family is polynomial in eleven generators and that adjoining the weight-\(252\) character form gives the full ring \cite{HashimotoUeda}.  
\subsection{Vinberg multipolarizations and arithmetic groups}
Let \(h_0\in\Lambda_{\mathrm{K3}}\) be primitive with \(h_0^2=d>0\), and let \(S_0\subset\Lambda_{\mathrm{K3}}\) be a primitive hyperbolic lattice containing \(h_0\). Put \(T_0=S_0^\perp\). In \cite[Section~4.1]{Vinberg2010}, a multipolarization of type \((h_0,S_0)\) on a K3 surface \(X\) consists of a class \(h\) in the chosen nef chamber and an \emph{unlabelled} sublattice \(S\subset\operatorname{NS}(X)\) containing \(h\), such that some marking
\begin{equation*} \varphi:H^2(X,\mathbb{Z})\xrightarrow{\sim}\Lambda_{\mathrm{K3}} \end{equation*}
sends \((h,S)\) to \((h_0,S_0)\). The marking itself is not part of the object. It is defined only up to an isometry of \(\Lambda_{\mathrm{K3}}\) preserving \(h_0\) and \(S_0\). Vinberg then defines
\begin{equation*} \mathrm O^+(T_0,h_0) =\operatorname{res}_{T_0} \left\{ g\in\mathrm O^+(\Lambda_{\mathrm{K3}}): g(h_0)=h_0,\quad g(S_0)=S_0 \right\}. \end{equation*}
The period space is
\begin{equation*} \mathrm O^+(T_0,h_0)\backslash\mathcal{D}(T_0), \end{equation*}
up to the hyperplane arrangements needed to impose ampleness or exclude extra algebraic classes. Vinberg states that \(\mathrm O^+(T_0,h_0)\) has finite index in \(\mathrm O^+(T_0)\).
\par For an even lattice \(K\), write
\begin{equation*} A_K=K^\vee/K,\qquad q_K:A_K\longrightarrow\mathbb{Q}/2\mathbb{Z} \end{equation*}
for its discriminant form, and set
\begin{equation*} \widetilde{\mathrm{O}}(K)= \ker\!\left(\mathrm O(K)\longrightarrow\mathrm O(q_K)\right). \end{equation*}
This is the stable orthogonal group or discriminant kernel. Because \(\Lambda_{\mathrm{K3}}\) is unimodular, a primitive decomposition \(S_0\perp T_0\subset\Lambda_{\mathrm{K3}}\) determines an anti-isometry
\begin{equation*} \lambda:(A_{S_0},q_{S_0}) \xrightarrow{\sim}(A_{T_0},-q_{T_0}). \end{equation*}
Vinberg's extension criterion \cite[Proposition~4]{Vinberg2010} states that \((g_S,g_T)\in\mathrm O(S_0)\times\mathrm O(T_0)\) extends to the K3 lattice precisely when
\begin{equation*} \overline g_T\circ\lambda=\lambda\circ\overline g_S. \end{equation*}
Consequently, we introduce
\begin{equation}
\label{eq:extension-group}
 \Gamma_{S_0,h_0}^+
 =\left\{
 g_T\in\mathrm O^+(T_0):
 \begin{array}{l}
 \exists g_S\in\mathrm O(S_0):\ g_S(h_0)=h_0, \\
 \overline g_T=\lambda\overline g_S\lambda^{-1}
 \end{array}
 \right\},
\end{equation}
which is Vinberg's \(\mathrm O^+(T_0,h_0)\), and we have
\begin{equation}
\label{eq:three-groups}
 \widetilde{\mathrm{O}}^+(T_0)\subseteq
 \Gamma_{S_0,h_0}^+\subseteq
 \mathrm O^+(T_0).
\end{equation}
The three groups have distinct moduli meanings:
\begin{enumerate}
\item if the primitive embedding \(S_0\hookrightarrow\operatorname{NS}(X)\) is fixed pointwise, the period group is \(\widetilde{\mathrm{O}}^+(T_0)\);
\item if only the image lattice and \(h_0\) are remembered, the period group is \(\Gamma_{S_0,h_0}^+\);
\item if one also forgets \(h_0\), all of \(\mathrm O(S_0)\) is allowed. When the reduction maps onto the discriminant-form groups are surjective, the resulting group on \(T_0\) is the full \(\mathrm O^+(T_0)\).
\end{enumerate}
If \(\pi_T\) and \(\pi_{S,h_0}\) denote the corresponding homomorphisms to the discriminant-form groups, then
\begin{equation*} \Gamma_{S_0,h_0}^+/\widetilde{\mathrm{O}}^+(T_0) \cong \operatorname{im}(\pi_T)\cap \lambda\operatorname{im}(\pi_{S,h_0})\lambda^{-1}, \end{equation*}
with the component-preserving condition understood. In particular, if \(\pi_T\) is surjective, a multipolarized quotient is the full quotient exactly when the stabilizer of \(h_0\) supplies every discriminant action; compare \cite{GritsenkoHulek}.
\par Vinberg's actual choices in \cite{Vinberg2010} are
\begin{equation*} S_n=D_{1,19-n},\qquad T_n=D_{2,n},\qquad 3\leq n\leq7, \end{equation*}
together with a particular vector \(h_0\in S_n\) of square \(4\). From \cite[Prop.~7]{Vinberg2010} we have
\begin{equation*} \mathrm O^+(T_n,h_0)= \begin{cases} \Gamma_7^0,&n=7,\\ \Gamma_6^{\mathrm{ext}}=\mathrm O^+(T_6),&n=6,\\ \Gamma_n=\mathrm O^+(T_n),&n=5,4,3. \end{cases} \end{equation*}
In the case \(n=7\), the multipolarized quotient is not the full quotient. Vinberg's canonical equations are quartics in \(\mathbb{P}^3\) for these \(S_n\). The Enriques lattice, whose discriminant has absolute value \(2^{10}\), does not occur. In \cite{Vinberg2018}, Vinberg considers the rank-ten case with
\begin{equation*} D_{1,9}\cong U\oplus D_8(-1) \end{equation*}
and a distinguished polarization of square eight.  The resulting canonical scroll model is discussed in Section~\ref{ssec:scroll_model}.
\subsection{Vinberg's canonical scroll model}
\label{ssec:scroll_model}
Let
\begin{equation*} \mathcal S_{\mathrm{sc}} =\mathcal S(0,1,2)\subset\mathbb P^5 \end{equation*}
be the singular rational normal scroll with homogeneous coordinates
\begin{equation*} (x_0,x_1,x_2,y_1,y_2,z). \end{equation*}
Its standard resolution is the projective bundle
\begin{equation*} \widetilde{\mathcal S}_{\mathrm{sc}} :=\mathbb P_{\mathbb P^1} \bigl( \mathcal O_{\mathbb P^1} \oplus\mathcal O_{\mathbb P^1}(1) \oplus\mathcal O_{\mathbb P^1}(2) \bigr). \end{equation*}
The tautological morphism
\(\widetilde{\mathcal S}_{\mathrm{sc}}\to\mathcal S_{\mathrm{sc}}\) contracts the section associated with the trivial summand to the vertex \([1:0:0:0:0:0]\).  Its fibers map to a one-parameter family of ruling planes \(L(u)\cong\mathbb P^2\), all meeting at that vertex.  The image is a threefold of degree three.  In the homogeneous coordinates $(x_0,x_1,x_2,y_1,y_2,z)$, its homogeneous ideal is generated by
\begin{equation}
\label{eq:vinberg-scroll-relations}
x_2y_1=x_1z,
\qquad
x_1y_2=x_2z,
\qquad
y_1y_2=z^2.
\end{equation}
On an affine chart of the base, the ruling plane over $u\in\mathbb A^1$ is parametrized by
\begin{equation*} x_2=ux_1,\qquad z=uy_1,\qquad y_2=u^2y_1, \end{equation*}
with $(x_0:x_1:y_1)$ serving as homogeneous coordinates on $L(u)$.  
\par Vinberg's canonical cubic is
\begin{equation}
\label{eq:vinberg-canonical-cubic}
\begin{aligned}
\mathcal F={}&x_0^2y_1+x_1^3+x_2^2z
 +(ax_1+by_1)y_1^2                                      \\
&+(a_1x_1y_1+a_2x_2y_2+b_1y_1^2+b_2y_2^2)z             \\
&+(f_1x_1+f_2x_2+g_1y_1+g_2y_2+hz)z^2.
\end{aligned}
\end{equation}
Restricting Vinberg's cubic equation to $L(u)$ produces a plane cubic depending on $u$, and these plane cubics form the associated elliptic fibration. The corresponding satellite cubic is
\begin{equation}
\label{eq:vinberg-satellite-cubic}
\begin{aligned}
\widetilde{\mathcal F}={}&x_0^2z+x_1^2x_2
 +(x_2^2+a_2x_2y_2+b_2y_2^2)y_2                         \\
&+(ax_1+by_1)y_1z+(f_2x_2+g_2y_2)y_2z                  \\
&+(a_1x_1+f_1x_2+b_1y_1+hy_2+g_1z)z^2.
\end{aligned}
\end{equation}
These equations are given in \cite[Equations~(17) and~(18)]{Vinberg2018}. The degree-eight surface $Y_{\mathrm V}\subset\mathcal S_{\mathrm{sc}}$ is the common residual component of the two cubic sections.  Equivalently, it is defined
inside the scroll by
\begin{equation}
\label{eq:vinberg-residual-surface}
\mathcal F=\widetilde{\mathcal F}=0.
\end{equation}
The minimal resolution of $Y_{\mathrm V}$ is a K3 surface \cite[Equation~(20)]{Vinberg2018}. The cubic $\mathcal F$ does not cut out the surface $Y_{\mathrm V}$ alone. Indeed, since the scroll $\mathcal S_{\mathrm{sc}}$ has degree three, a cubic hypersurface section has degree nine, whereas $Y_{\mathrm V}$ has degree eight. Write \(v_1\) for the point \(u=0\) and \(v_2\) for the point \(u=\infty\) on the ruling base. Vinberg shows that
\begin{equation}
\label{eq:F-with-residual-fiber}
V_{\mathcal S_{\mathrm{sc}}}(\mathcal F)
=
Y_{\mathrm V}\cup L(v_2),
\end{equation}
where $L(v_2)\cong\mathbb P^2$ is one ruling plane of the scroll.  The satellite cubic satisfies
\begin{equation}
\label{eq:Ftilde-with-residual-fiber}
V_{\mathcal S_{\mathrm{sc}}}(\widetilde{\mathcal F})
=
Y_{\mathrm V}\cup L(v_1),
\end{equation}
where $L(v_1)$ is a different ruling plane. Consequently,
\begin{equation}
\label{eq:vinberg-residual-intersection}
Y_{\mathrm V}
=
V_{\mathcal S_{\mathrm{sc}}}
  (\mathcal F,\widetilde{\mathcal F}).
\end{equation}
Although two cubic equations occur here, they do not impose two independent conditions on the threefold scroll.  In the function field of $\mathcal S_{\mathrm{sc}}$ they are related by
\begin{equation}
\label{eq:F-Ftilde-rational-relation}
\widetilde{\mathcal F}
=
\frac{x_2}{x_1}\mathcal F,
\qquad
\frac{x_2}{x_1}
=
\frac{y_2}{z}
=
\frac{z}{y_1},
\end{equation}
where the second set of identities follows from the scroll relations.  On the fiber
\begin{equation*} x_2=ux_1,\qquad y_2=u^2y_1,\qquad z=uy_1, \end{equation*}
one therefore has
\begin{equation}
\label{eq:F-Ftilde-on-ruling}
\left.\widetilde{\mathcal F}\right|_{L(u)}
=
u\left.\mathcal F\right|_{L(u)}.
\end{equation}
Thus, on a general ruling, $\mathcal F$ and $\widetilde{\mathcal F}$ define the same plane cubic, and it suffices to use $\mathcal F$ in order to derive the Weierstrass equation.  The satellite cubic becomes essential at the distinguished fiber $u=\infty$, where $\mathcal F$ vanishes identically.  Its restriction to that fiber is
\begin{equation}
\label{eq:Ftilde-distinguished-fiber}
\left.\widetilde{\mathcal F}\right|_{L(v_2)}
=
y_2\bigl(x_2^2+a_2x_2y_2+b_2y_2^2\bigr),
\end{equation}
and hence records the three distinguished lines used in the subsequent neighbor construction.
\subsection{The standard and alternate fibrations}
\label{ssec:fibs_on_vinberg}
The ruling of the scroll is parametrized, on an affine chart of the base, by $u$.  The fiber over $u$ is obtained by setting
\begin{equation}
\label{eq:vinberg-scroll-ruling}
x_2=ux_1,
\qquad
y_2=u^2y_1,
\qquad
z=uy_1.
\end{equation}
Substitution of \eqref{eq:vinberg-scroll-ruling} into \eqref{eq:vinberg-canonical-cubic} gives
\begin{equation}
\label{eq:vinberg-fiber-cubic}
\mathcal F
=x_0^2y_1+x_1^3+u^3x_1^2y_1
 +\phi_2(u)x_1y_1^2+\phi_3(u)y_1^3,
\end{equation}
where
\begin{equation}
\label{eq:vinberg-phi23}
\begin{aligned}
\phi_2(u)
 &=a_2u^4+f_2u^3+f_1u^2+a_1u+a,\\
\phi_3(u)
 &=b_2u^5+g_2u^4+hu^3+g_1u^2+b_1u+b.
\end{aligned}
\end{equation}
This is Vinberg's fiberwise Weierstrass form in \cite[Equations~(23)--(25)]{Vinberg2018}.
\par We have the following:
\begin{proposition}
Vinberg's scroll model $Y_{\mathrm V}^{\min}$ is birational to the Weierstrass model~\eqref{eq:H+D8-weierstrass}. The explicit relation between Vinberg's coefficients and those of the Weierstrass model is given in Section~\ref{App:VinbergCoefficients}. In particular, one has $H_8=-\tfrac14(a_2^2-4b_2)$.
\end{proposition}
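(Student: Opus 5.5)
The plan starts from Vinberg's fiberwise cubic \eqref{eq:vinberg-fiber-cubic} on the ruling plane $L(u)$ and converts it directly to Weierstrass form. On the affine chart $y_1=1$ of $L(u)$ the restriction reads
\begin{equation*}
x_0^2+x_1^3+u^3x_1^2+\phi_2(u)x_1+\phi_3(u)=0 .
\end{equation*}
Setting $x_0=iy$ and $x_1=-x$ gives
\begin{equation*}
y^2=x^3-u^3x^2+\phi_2(u)x-\phi_3(u),
\end{equation*}
and replacing $u\mapsto -u$ (together with rescaling by suitable roots of unity if needed) makes the $x^2$-coefficient equal to $u^3$. This is already a Weierstrass equation over $\mathbb P^1_{(u)}$ with a section: the point $[x_0:x_1:y_1]=[1:0:0]$ on each $L(u)$ is the vertex of the scroll, which the resolution $\widetilde{\mathcal S}_{\mathrm{sc}}$ separates into a section. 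The degrees are $\deg\phi_2\le4$ and $\deg\phi_3\le5$.

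The equation \eqref{eq:H+D8-weierstrass}, however, has the $x$-coefficient of degree $2$ in $u$ with lower-degree terms $F_4,F_6$ in the $x^2$-coefficient, together with a constant of degree $5$. To match it I would complete the structure in two steps.

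\textbf{Step 1.} Translate $x\mapsto x+\kappa(u)$, with $\kappa$ a quadratic polynomial in $u$. The cubic term of the new $x$-coefficient is $3\kappa^2+2u^3\kappa$; choosing $\kappa$ appropriately, and including in $\kappa$ the terms $-\tfrac12$ times the leading quadratic and quartic pieces of $\phi_2$, lowers the $x$-coefficient to degree $\le2$.

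\textbf{Step 2.} Translate $u\mapsto u+c$ to kill the $u^2$-term in the $x^2$-coefficient, producing $u^3+F_4u+F_6$.

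For the point at infinity, the parity of degrees (weights $6,8,\dots$) should force the $\mathrm I_4^*$ fiber at $u=\infty$, where Vinberg's residual plane $L(v_2)$ sits. Comparing coefficients then reads off each $F,G,H$ as a weighted polynomial in $(a,b,a_1,\dots,h)$, which is what Section~\ref{App:VinbergCoefficients} records.

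The birationality claim follows from \eqref{eq:F-Ftilde-on-ruling}. On every ruling other than $u=\infty$, the surface $Y_{\mathrm V}$ agrees with the cubic curve cut by $\mathcal F$, so the fiberwise transformation above is a birational map of $Y_{\mathrm V}^{\min}$ onto the Weierstrass surface. Both are K3 surfaces, so their minimal models coincide.

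The main obstacle is the bookkeeping at infinity and the identification of $H_8$. I would rather work in the coordinate $v=1/u$ near $v_2$, because there the satellite cubic \eqref{eq:Ftilde-distinguished-fiber} controls the fiber. The leading coefficients $a_2$ and $b_2$ of $\phi_2$ and $\phi_3$ are exactly the data of the binary quadratic $x_2^2+a_2x_2y_2+b_2y_2^2$ describing the two distinguished lines. The coefficient $H_8$ multiplies the top power $u^5$ in the constant term. After the translation in Step 1, it arises from the order-two obstruction: completing the square of the quadratic $X^2+a_2X+b_2$ leaves exactly the discriminant. I would expect
\begin{equation*}
H_8=-\tfrac14\bigl(a_2^2-4b_2\bigr),
\end{equation*}
which is consistent with $H_8=-H_4^2$, where $H_4$ is the square root of this discriminant as described in the introduction. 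I would verify it by tracking only the $u^5$-coefficient through the two translations, which is a short computation; the remaining coefficients are routine and best delegated to computer algebra.
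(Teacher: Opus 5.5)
Your overall route is the paper's: pass to an affine chart of the ruling plane, translate $x$, read off the coefficients, and get birationality from the fiberwise map on the finite-$u$ chart together with uniqueness of minimal K3 models. However, the normalization step, which is the real content of the proposition, fails as written.

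\textbf{The sign changes.} These are miscomputed and unnecessary. The substitution $x_0=iy$, $x_1=-x$ gives $y^2=-x^3+u^3x^2-\phi_2x+\phi_3$, not the equation you display. By contrast, $x_{\mathrm V}=x_1/y_1$, $y_{\mathrm V}=ix_0/y_1$ already gives $y_{\mathrm V}^2=x_{\mathrm V}^3+u^3x_{\mathrm V}^2+\phi_2x_{\mathrm V}+\phi_3$ with the correct sign on the $x^2$-term. Your detour through $x\mapsto-x$, $u\mapsto-u$ is the weighted rescaling with $\lambda^2=-1$. It reverses the signs of $F_6,G_{10},H_{10},H_{14},H_{18}$, so it would not reproduce \eqref{eq:vinberg-to-FGH}.

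\textbf{Step~1.} It cannot work with a genuinely quadratic $\kappa$. If $\kappa=k_2u^2+\cdots$ with $k_2\neq0$, the new $x$-coefficient $\phi_2+2u^3\kappa+3\kappa^2$ contains $2k_2u^5$. Nothing cancels this term, since $\deg\phi_2\le4$ and $\deg\kappa^2\le4$. So $\kappa$ must be linear. Cancelling $a_2u^4+f_2u^3$ (the quartic and cubic terms of $\phi_2$, not the quadratic and quartic ones) then forces $\kappa=r(u)=-\tfrac12(a_2u+f_2)$.

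\textbf{Step~2.} It is vacuous. The new $x^2$-coefficient is $u^3+3r(u)=u^3-\tfrac32a_2u-\tfrac32f_2$, which has no $u^2$-term. This gives $F_4=-\tfrac32a_2$ and $F_6=-\tfrac32f_2$ at once, and a nonzero shift of $u$ would only create a $u^2$-term.

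\textbf{The identity for $H_8$.} You leave $H_8=-\tfrac14(a_2^2-4b_2)$ as an expectation, and the detour through $v=1/u$ and the satellite cubic is not needed. After $x_{\mathrm V}=x+r(u)$, the constant term is $\phi_3+\phi_2r+u^3r^2+r^3$, which has degree at most five. Its $u^5$-coefficient is $b_2-\tfrac12a_2^2+\tfrac14a_2^2=b_2-\tfrac14a_2^2$. This is your completing-the-square heuristic made precise: it is the quadratic $\rho^2+a_2\rho+b_2$ of \eqref{eq:vinberg-line-quadratic} evaluated at its critical point $\rho=-a_2/2$.

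The remaining entries of \eqref{eq:vinberg-to-FGH} follow by expanding \eqref{eq:cubic-translation-identity} in the same way. Your remark about an $\mathrm I_4^*$ fiber at $u=\infty$ is not needed for this statement.
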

\begin{proof}
On the affine chart $y_1\neq0$, put
\begin{equation}
\label{eq:vinberg-affine-xy}
x_{\mathrm V}=\frac{x_1}{y_1},
\qquad
y_{\mathrm V}=i\frac{x_0}{y_1}.
\end{equation}
Equation~\eqref{eq:vinberg-fiber-cubic} becomes
\begin{equation}
\label{eq:vinberg-affine-weierstrass}
y_{\mathrm V}^2
=x_{\mathrm V}^3+u^3x_{\mathrm V}^2
 +\phi_2(u)x_{\mathrm V}+\phi_3(u).
\end{equation}
Define
\begin{equation}
\label{eq:vinberg-x-translation}
r(u)=-\frac{a_2u+f_2}{2},
\qquad
x_{\mathrm V}=x+r(u).
\end{equation}
We have
\begin{align}
&x_{\mathrm V}^3+u^3x_{\mathrm V}^2
 +\phi_2x_{\mathrm V}+\phi_3                         \notag\\
&\quad=x^3+(u^3+3r)x^2
 +(\phi_2+2u^3r+3r^2)x
 +(\phi_3+\phi_2r+u^3r^2+r^3).
\label{eq:cubic-translation-identity}
\end{align}
For the particular choice in \eqref{eq:vinberg-x-translation}, the terms $u^4$ and $u^3$ in the coefficient of $x$ cancel.   Equation~\eqref{eq:vinberg-affine-weierstrass} takes the normalized form
\begin{equation}
\label{eq:H+D8-weierstrass}
\begin{aligned}
X_{F,G,H}: \quad y^2=x^3
&+(u^3+F_4u+F_6)x^2                                      \\
&+(G_8u^2+G_{10}u+G_{12})x                               \\
&+H_8u^5+H_{10}u^4+H_{12}u^3
 +H_{14}u^2+H_{16}u+H_{18}.
\end{aligned}
\end{equation}
This is the standard Weierstrass equation~\eqref{eq:H+D8-weierstrass}. In particular, the multiset of weights of the eleven parameters is
\begin{equation*} 4,6,8,8,10,10,12,12,14,16,18, \end{equation*}
in agreement with Vinberg's free algebra of automorphic forms. Together with the following rational coordinate map, they identify the two birational surface models. On the open charts used above, the map from the Weierstrass model \eqref{eq:H+D8-weierstrass} to the scroll is
\begin{equation}
\label{eq:weierstrass-to-vinberg-scroll-map}
\begin{aligned}
(u,x,y)
&\dashrightarrow
[x_0:x_1:x_2:y_1:y_2:z]                                  \\
&=
[-iy:x+r(u):u(x+r(u)):1:u^2:u].
\end{aligned}
\end{equation}
The inverse map is
\begin{equation}
\label{eq:vinberg-scroll-to-weierstrass-map}
u=\frac{z}{y_1},
\qquad
x=\frac{x_1}{y_1}-r(u),
\qquad
y=i\frac{x_0}{y_1}.
\end{equation}
The maps are birational on the finite-$u$ chart.  Since smooth K3 surfaces that are birational are isomorphic, the two minimal resolutions are the same K3 surface.
\end{proof}
\par The second isometric presentation
\begin{equation*} U\oplus D_8(-1)\cong U(2)\oplus E_8(-1) \end{equation*}
is also visible in Vinberg's distinguished scroll fiber.  Recall that restricting the satellite cubic to the fiber at the distinguished boundary point gives the three lines
\begin{equation}
\label{eq:vinberg-three-lines}
y_2\bigl(x_2^2+a_2x_2y_2+b_2y_2^2\bigr)=0.
\end{equation}
The two lines belonging to the quadratic factor are determined by the roots of
\begin{equation}
\label{eq:vinberg-line-quadratic}
\rho^2+a_2\rho+b_2=0.
\end{equation}
If we introduce $\delta^2=a_2^2-4b_2$, the two roots of Equation~\eqref{eq:vinberg-line-quadratic} are $\varrho_{\pm}=\frac{-a_2\pm\delta}{2}$.  Using \eqref{eq:vinberg-to-FGH}, we find $a_2^2-4b_2=-4H_8$. Thus, after adjoining a choice of square root, we may set
\begin{equation}
\label{eq:g4-H4-definition}
\delta=2H_4,
\qquad
H_8=-H_4^2.
\end{equation}
\begin{remark}
\label{rem:double-cover}
At a fixed point in the moduli space with \(H_8\neq0\), either square root of the discriminant of~\eqref{eq:vinberg-line-quadratic} may be chosen. Adjoining \(\delta=2H_4\) to the parameter space defines the double cover
\begin{equation}
\label{eq:level-two-double-cover}
\delta^2=-4H_8.
\end{equation}
The nontrivial deck transformation $\delta\mapsto-\delta$ interchanges the two roots, hence the two lines in \eqref{eq:vinberg-three-lines}. Thus \(H_8\) is defined on the full coefficient quotient, whereas \(H_4=\delta/2\) is defined on the twofold cover that selects one line. For the period lattice \(T=U^{\oplus2}\oplus D_8(-1)\), the quotient \(\mathrm O^+(T)/\widetilde{\mathrm O}^+(T)\) is cyclic of order two and its nontrivial element interchanges the two isotropic discriminant classes; the form denoted $G_4$ in \cite[Section~5]{ClingherMalmendierWilliams2026} is $H_4$ in our normalization. Its branch divisor is
\begin{equation}
\label{eq:level-two-branch-divisor}
H_8=0
\quad\Leftrightarrow\quad
a_2^2=4b_2.
\end{equation}
Along this divisor the two lines in the quadratic factor of \eqref{eq:vinberg-three-lines} merge.  This is Vinberg's codimension-one specialization in \cite[Equation~(22)]{Vinberg2018}. The resolved K3 surface generally remains smooth but acquires an additional algebraic class; the polarization enhances from \(U\oplus D_8(-1)\) to one containing \(U\oplus D_9(-1)\).
\end{remark}
\par The choice of square root in a 2-neighbor construction corresponds to the choice of one of the two lines in the quadratic factor of \eqref{eq:vinberg-three-lines}. We have the following:
\begin{proposition}
\label{prop:alt_fib}
A very general K3 surface $X$ with $\operatorname{NS}(X)=U\oplus D_8(-1)$ admits a birational model as a double cover of $\mathbb{P}^1_{(s)} \times \mathbb{P}^1_{(u)}$, given on an affine chart by
\begin{equation}
\label{eq:alternate-genus-one-quartic}
w^2
=u^4A_0(s)+u^3A_1(s)+u^2A_2(s)+uA_3(s)+A_4(s),
\end{equation}
with
\begin{equation}
\label{eq:alternate-quartic-coefficients}
\begin{aligned}
A_0(s)
 &=2H_4s+H_{10},
\\
A_1(s)
 &=s^2+H_4^3+F_4H_4^2+H_4G_8+H_{12},
\\
A_2(s)
 &=\bigl(3H_4^2+2F_4H_4+G_8\bigr)s
   +F_6H_4^2+H_4G_{10}+H_{14},
\\
A_3(s)
 &=(3H_4+F_4)s^2
   +(2F_6H_4+G_{10})s
   +H_4G_{12}+H_{16},
\\
A_4(s)
 &=s^3+F_6s^2+G_{12}s+H_{18}.
\end{aligned}
\end{equation}
\end{proposition}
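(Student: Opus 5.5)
The plan is to realize the alternate fibration as a $2$-neighbor of the standard Weierstrass model~\eqref{eq:H+D8-weierstrass}, using the new base coordinate
\[
s=x-H_4u ,
\]
which becomes available after adjoining $H_4$ with $H_8=-H_4^2$ as in~\eqref{eq:g4-H4-definition}. On the affine chart of~\eqref{eq:H+D8-weierstrass}, substitute $x=s+H_4u$ and set $w=y$. The map $(u,x,y)\mapsto(s,u,w)$ is an invertible affine change of coordinates, so the resulting affine surface is birational to $X_{F,G,H}$. It is therefore birational to $Y_{\mathrm V}^{\min}$ by the preceding proposition. The key point is the degree in $u$ of the right-hand side. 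The only $u^5$ contributions come from $u^3x^2\mapsto H_4^2u^5+\cdots$ and from $H_8u^5=-H_4^2u^5$, and these cancel; this is exactly where the character form is needed. Nothing of higher degree survives, so the right-hand side is a quartic in $u$ whose coefficients are polynomials in $s$.

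Collecting coefficients gives~\eqref{eq:alternate-quartic-coefficients}, and I would list only the sources for each. The $u^4$ term comes from $2H_4s$ in $u^3x^2$ and from $H_{10}$. The $u^3$ term comes from $s^2$ and $H_4^3$ (via $x^3$), $F_4H_4^2$, $G_8H_4$, and $H_{12}$. The lower coefficients arise in the same way from $x^3$, $(F_4u+F_6)x^2$, $(G_8u^2+G_{10}u+G_{12})x$ and $H_{14},H_{16},H_{18}$. This step is routine expansion, which I would verify by computer algebra. Projecting $(s,u,w)\mapsto s$ then gives a double cover of $\mathbb P^1_{(s)}\times\mathbb P^1_{(u)}$ whose fibers over $s$ are double covers of $\mathbb P^1_{(u)}$ branched in four points, hence genus one.

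It remains to check that this is a genuine genus-one fibration on the very general member and that it is the divisibility-two fibration. For smoothness of the general fiber, I would check that the discriminant of the binary quartic in $u$ is not identically zero in $s$ by specializing the coefficients to a convenient numerical point. For the fiber class $f'$, each standard fiber $u=u_0$ meets the curve $x=s_0+H_4u_0$ in the two points $y=\pm\sqrt{\cdot}$. Hence $f\cdot f'=2$, the old fiber is a bisection, and $f'^2=0$. Since $f'$ is a primitive isotropic class of $\operatorname{NS}(X)=M_D$, Lemma~\ref{lem:MD-two-isotropic-orbits} places it in one of the two orbits. The divisibility-one orbit would give a section. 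I would exclude this by showing that $\operatorname{div}(f')=2$ directly: pair $f'$ with the zero section and with the $\mathrm I_4^*$ components at $u=\infty$. The linear system $x-H_4u=\text{const}$ meets these components in a pattern I expect to produce only even intersection numbers. Alternatively, I would identify $f'$ with the pencil through the line $\rho=\varrho_+$ in~\eqref{eq:vinberg-three-lines}, which is a $2$-neighbor of the fiber class.

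The main obstacle is this last lattice-theoretic identification. It requires tracking how the curve $x=H_4u+s_0$ passes through the resolved $\mathrm I_4^*$ fiber at $u=\infty$, that is, which of the two far simple components it meets; this component is the one singled out by the sign of $H_4$. I would handle it by working in the chart at $u=\infty$ with explicit blow-ups. As a fallback, I would rely on Proposition~\ref{thm:mirror-boundary-fibration}: any elliptic fibration on $X$ without a section must lie in the divisibility-two orbit. It would then suffice to show that the generic fiber of the quartic has no rational point over $\mathbb C(s)$, which holds because the leading coefficient $A_0(s)=2H_4s+H_{10}$ is not a square.
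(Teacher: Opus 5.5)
Your derivation of the quartic is exactly the paper's proof. You substitute $x=s+H_4u$ into \eqref{eq:H+D8-weierstrass}, observe that the only degree-five terms are $(H_4^2+H_8)u^5=0$, and collect coefficients; this establishes the displayed model, and your bookkeeping checks out.

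One piece of your additional divisibility discussion is wrong. Your fallback claims the generic fiber has no $\mathbb C(s)$-point \emph{because} $A_0(s)=2H_4s+H_{10}$ is not a square, and that does not follow. A non-square leading coefficient only shows that the two points over $u=\infty$ are not rational. It says nothing about points with a finite coordinate $u_0\in\mathbb C(s)$; for instance, $w^2=su^4+1$ has the point $(u,w)=(0,1)$.

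Your primary route, however, does work, and it needs no blow-ups.
\begin{itemize}
\item The fiber over $s=\infty$ is the polar divisor of $x-H_4u=(x'-H_4u'^3)/u'^4$, where $u'=1/u$ and $x'=xu'^4$. It is supported on the zero section $O$ and the $\mathrm I_4^*$ fiber at $u=\infty$. Its coefficient is $2$ on $O$ (the pole order of $x$) and $4$ on the identity component $\Theta_0$.
\item $\Theta_0$ meets only $O$ and one other candidate curve. A multiplicity-two end curve attached to a multiplicity-four curve of valence two occurs only in $\mathrm{II}^*$; in $\mathrm{III}^*$ that curve has valence three. This forces $f'=2O+4\Theta_0+3\Theta_1+6\Theta_2+5\Theta_3+4\Theta_4+3\Theta_5+2\Theta_6+\Theta_7$.
\item In this formula, $\Theta_1$ is the other near simple component and $\Theta_2,\dots,\Theta_6$ is the chain of double components. $\Theta_7,\Theta_8$ are the far simple components, and the sign of $H_4$ decides which of them appears.
\item Then $f'\cdot f=f'\cdot\Theta_8=2$, while $f'$ is orthogonal to $O,\Theta_1,\dots,\Theta_7$. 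Since $O,f,\Theta_1,\dots,\Theta_8$ generate $\operatorname{NS}(X)$, this gives $\operatorname{div}(f')=2$.
\end{itemize}

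The paper argues more briefly. The torsor has the same singular fibers as its relative Jacobian, so it has a $\mathrm{II}^*$ fiber. A section would then put the unimodular lattice $U\oplus E_8(-1)$ with finite index inside $\operatorname{NS}(X)$, which has discriminant $4$, and that is impossible.
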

\begin{proof}
The neighbor substitution in the normalized Weierstrass coordinate is
\begin{equation}
\label{eq:neighbor-x-substitution}
x=s+H_4u.
\end{equation}
Combining \eqref{eq:neighbor-x-substitution} with \eqref{eq:vinberg-x-translation} gives a direct expression in Vinberg's fiber coordinate, namely
\begin{equation}
\label{eq:neighbor-directly-in-vinberg-coordinate}
x_{\mathrm V}
=s-\frac{f_2}{2}+\varrho_+u
=s-\frac{f_2}{2}+\frac{-a_2+\delta}{2}u.
\end{equation}
Replacing $\delta$ by $-\delta$ replaces $\varrho_+$ by $\varrho_-$ and selects the other line. Substitute \eqref{eq:neighbor-x-substitution} and $H_8=-H_4^2$ into \eqref{eq:H+D8-weierstrass}. The only terms of degree five in $u$ are
\begin{equation*} u^3(H_4u)^2+H_8u^5=(H_4^2+H_8)u^5=0. \end{equation*}
After renaming the left-hand coordinate $w$, the equation is the quartic~\eqref{eq:alternate-genus-one-quartic}. In particular,
\begin{equation}
\label{eq:alternate-quartic-degrees}
\deg_s(A_4,A_3,A_2,A_1,A_0)=(3,2,1,2,1).
\end{equation}
The absence of a section for the very general member follows from the same discriminant argument as in Appendix~\ref{App:2NS}: a section would give a full-rank sublattice $U\oplus E_8(-1)$ of discriminant one inside $\operatorname{NS}(X)$ of discriminant four, which is impossible. Since the old fiber is a bisection, the new fiber class has divisibility two.
\end{proof}
Equation~\eqref{eq:alternate-genus-one-quartic} realizes the $U(2)\oplus E_8(-1)$ presentation of the same K3 surface $X_{F,G,H}$ as a genus-one fibration over $\mathbb{P}^1_{(s)}$. 
The relative Jacobian of the index-two genus-one fibration \eqref{eq:alternate-genus-one-quartic} has the Weierstrass equation
\begin{equation}
\label{eq:relative-jacobian-IJ}
J_{F,G,H}: \quad \eta^2
=\xi^3-\frac13\mathcal I(s)\xi-\frac1{27}\mathcal J(s),
\end{equation}
where the classical binary-quartic invariants are
\begin{equation}
\label{eq:I-J-of-s}
\begin{aligned}
\mathcal I
 &=12A_0A_4-3A_1A_3+A_2^2,
\\
\mathcal J
 &=72A_0A_2A_4+9A_1A_2A_3
   -27A_0A_3^2-27A_1^2A_4-2A_2^3,
\end{aligned}
\end{equation}
see \cite{FisherGenusOneInvariants} for these conventions.  
\par For very general coefficients, we choose markings so that
\begin{equation} \label{eqn:lattices}T(X_{F,G,H})\cong T_D:=U^{\oplus2}\oplus D_8(-1),\qquad T(J_{F,G,H})\cong T_E:=U^{\oplus2}\oplus E_8(-1), \end{equation}
Ogg--Shafarevich theory associates to the genus-one fibration its Brauer class \(\alpha\) on the relative Jacobian. Its order is the multisection index, which is two here. Thus \(\alpha\in\operatorname{Br}(J_{F,G,H})[2]\), and it identifies the transcendental lattice of the torsor via
\begin{equation*} T_D\cong\ker\!\left(\alpha:T_E\rightarrow\mathbb Z/2\mathbb Z\right) \end{equation*}
as an index-two Hodge sublattice; see \cite[Lemma~4.4 and Equation~(5.1)]{MeinsmaShinder}.  In particular, \(T_D\otimes\mathbb Q=T_E\otimes\mathbb Q=:V\), 
and the two period lines agree in the type-IV domain \(\mathcal D(V)\). 
\subsection{Relation with the Hashimoto--Ueda family}
Equation~\eqref{eq:relative-jacobian-IJ} is a Weierstrass model for the family of $U\oplus E_8(-1)$-polarized K3 surfaces.
Such surfaces were also studied by Hashimoto and Ueda in \cite{HashimotoUeda}. Their normal form is
\begin{equation}
\label{eq:UE8-general}
\begin{aligned}
X_{\mathrm{HU}}: \quad y^2={}&x^3
+\bigl(
C_4t^4+C_{10}t^3+C_{16}t^2+C_{22}t+C_{28}
\bigr)x\\
&+
t^7+C_{12}t^5+C_{18}t^4+C_{24}t^3
+C_{30}t^2+C_{36}t+C_{42}.
\end{aligned}
\end{equation}
They prove that the scalar-valued ring for $\Gamma:=\mathrm O^+(U^{\oplus2}\oplus E_8(-1))$ is
\begin{equation}
M_*(\Gamma)
=
\mathbb C[C_4,C_{10},C_{12},C_{16},C_{18},C_{22},C_{24},C_{28},
C_{30},C_{36},C_{42}],
\end{equation}
and that the full ring with characters is obtained by adjoining a determinant-character generator $\Phi_{252}$ satisfying one relation of weight $504$; namely $\Phi_{252}^2=\Psi_{504}$.  We discuss the details in Section~\ref{sec:Hashimoto-Ueda}.
\par Let \(\Gamma_D\) denote the arithmetic group for the Vinberg \(D_8\)-family, and let \(\chi_4\) be the character of the weight-four form \(H_4\).  We work on the character cover corresponding to
\begin{equation*} \Gamma_D^{H_4}:=\ker(\chi_4). \end{equation*}
On this cover the entries of
\begin{equation}
\label{eq:D8-generator-vector}
\mathbf D=(F_4,H_4,F_6,G_8,G_{10},H_{10},G_{12},H_{12},H_{14},H_{16},H_{18})
\end{equation}
are scalar-valued modular forms.  Their weights are
\begin{equation*} \mathbf d=(4,4,6,8,10,10,12,12,14,16,18), \end{equation*}
and hence \(\sum_i d_i=114\).  Similarly, set
\begin{equation}
\label{eq:E8-generator-vector}
\mathbf C=(C_4,C_{10},C_{12},C_{16},C_{18},C_{22},C_{24},C_{28},C_{30},C_{36},C_{42}),
\end{equation}
with weight vector
\begin{equation*} \mathbf c=(4,10,12,16,18,22,24,28,30,36,42), \end{equation*}
so that \(\sum_j c_j=242\).  Since the relevant type-IV domains have dimension ten, Proposition~\ref{prop:Jacobian-general} gives modular Jacobians of weights \(124\) and \(252\).  We write
\begin{equation}
\label{eq:D8-E8-modular-Jacobians}
\phi_{124}:=J(\mathbf D),\qquad J(\mathbf C)\doteq\Phi_{252},
\end{equation}
where \(\doteq\) denotes equality up to a nonzero constant.  Thus \(\phi_{124}\) has determinant character for \(\Gamma_D^{H_4}\); viewed on the full group \(\Gamma_D\), its character is \(\chi_4\det\).  The second relation in \eqref{eq:D8-E8-modular-Jacobians} follows from Theorem~\ref{thm:HU-Jacobian} and the normalization \(\Phi_{252}^2=\Psi_{504}\); see \cite{HashimotoUeda,WangClassificationFree}.

\begin{lemma}
\label{lem:relative-Jacobian-period-map}
The inclusion of lattices induces an inclusion of arithmetic groups
\begin{equation*} \Gamma_D^{H_4}=\widetilde{\mathrm O}^+(T_D)\subseteq\Gamma_E:=\mathrm O^+(T_E) \end{equation*}
and hence a finite morphism
\begin{equation*} \Gamma_D^{H_4}\backslash\mathcal D(V)\longrightarrow\Gamma_E\backslash\mathcal D(V). \end{equation*}
In a single rational system of tube-domain coordinates for \(V= T_D\otimes\mathbb Q=T_E\otimes\mathbb Q\), this morphism is the identity in period coordinates, and the determinant character of \(\Gamma_E\) restricts to the determinant character of \(\Gamma_D^{H_4}\).
\end{lemma}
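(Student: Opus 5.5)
The plan is to work entirely inside $V=T_D\otimes\mathbb Q=T_E\otimes\mathbb Q$, using the index-two Hodge sublattice $T_D=\ker(\alpha:T_E\to\mathbb Z/2\mathbb Z)$ from Ogg--Shafarevich theory, and to prove the group inclusion first, then read off the morphism and the character.

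\textbf{Step 1: identify $\Gamma_D^{H_4}$ and embed it in $\Gamma_E$.} By Remark~\ref{rem:double-cover}, $\mathrm O^+(T_D)/\widetilde{\mathrm O}^+(T_D)$ is cyclic of order two, and its nontrivial element interchanges the two isotropic discriminant classes of $A_{T_D}\cong A_{D_8}$, which are the classes distinguishing the two lines $\varrho_\pm$. The sign of $H_4=\delta/2$ flips exactly under this element, so $\chi_4$ is the character $\mathrm O^+(T_D)\to\mathrm O(q_{T_D})\cong\mathbb Z/2\mathbb Z$ and $\ker\chi_4=\widetilde{\mathrm O}^+(T_D)$.

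For the inclusion, note $T_D\subset T_E\subset T_E^\vee=T_E\subset T_D^\vee$, so $T_E/T_D$ is an isotropic subgroup of order two in $A_{T_D}$. It is one of the two isotropic classes; which one is fixed by the chosen sign of $H_4$. Any $g\in\widetilde{\mathrm O}^+(T_D)$ acts trivially on $A_{T_D}$, so it preserves the overlattice $T_E$. It also preserves the positive-norm component, since this condition is read off in $V$. Hence $g\in\mathrm O^+(T_E)=\Gamma_E$. Equivalently, $g$ preserves the kernel of $\alpha$, because $\alpha$ corresponds to an element of $T_E^\vee/T_D^\vee$-type data fixed by the discriminant kernel.

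\textbf{Step 2: the morphism.} Both quotients are formed from the same domain $\mathcal D(V)$. The inclusion of groups gives a natural map $\Gamma_D^{H_4}\backslash\mathcal D(V)\to\Gamma_E\backslash\mathcal D(V)$. It is finite because both groups are arithmetic in $\mathrm O(V)$, so $\Gamma_D^{H_4}$ has finite index in $\Gamma_E$. Concretely, $\widetilde{\mathrm O}(T_D)$ contains the principal congruence subgroup of $\mathrm O(T_E)$ of level two, since $2T_E\subset T_D$.

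For tube-domain coordinates, choose the common splitting $U^{\oplus2}$ used in Proposition~\ref{thm:Sakai-comparison}. Take $T_D=U\oplus U\oplus D_8(-1)$ and $T_E=U\oplus U\oplus E_8(-1)$ sharing the first $U$ (isotropic $e$, with $f$), and realize $D_8\subset E_8$ with index two in the remaining summand. The tube coordinates $Z\in(U\oplus D_8(-1))\otimes\mathbb C=(U\oplus E_8(-1))\otimes\mathbb C$ are then literally the same. The period point of $X_{F,G,H}$ and of $J_{F,G,H}$ is the same line in $\mathcal D(V)$, as observed after~\eqref{eqn:lattices}. Therefore the map is the identity in $Z$.

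\textbf{Step 3: the determinant character.} Both determinants are computed on $V$, the same $\mathbb Q$-space, so $\det_{\Gamma_E}|_{\Gamma_D^{H_4}}=\det_{\Gamma_D^{H_4}}$ tautologically. The only subtlety is the automorphy factor. Since the coordinates agree, so does the factor $(Z,e)$-type automorphy $j(g,Z)$ attached to the common isotropic vector. A $\Gamma_E$-form of weight $k$ and character $\det$ therefore restricts to one on $\Gamma_D^{H_4}$.

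\textbf{Main obstacle.} I expect the delicate point to be Step 1. It requires checking that $\ker\chi_4$ is exactly the discriminant kernel, not merely a subgroup of index two of some other form. It also requires checking that the overlattice $T_E$ chosen by $\alpha$ is preserved even though $\widetilde{\mathrm O}$ is defined via $A_{T_D}$ and not via $T_E$. I would verify this directly: compute $A_{D_8}\cong(\mathbb Z/2)^2$ with $q$ values $0,1,1$ mod $2$ on the spinor classes. This identifies the two isotropic classes as the two spinor classes. Then confirm, through \eqref{eq:neighbor-directly-in-vinberg-coordinate}, that switching $\varrho_+\leftrightarrow\varrho_-$ is the outer $D_8$ automorphism swapping them.
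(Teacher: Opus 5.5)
Your proposal is correct and follows the paper's proof essentially step for step. You identify $\Gamma_D^{H_4}$ with the discriminant kernel via Remark~\ref{rem:double-cover}, note that it fixes the order-two isotropic subgroup $T_E/T_D\subset A_{T_D}$ and so preserves $T_E$, get finiteness from arithmeticity, and read off both the identity in period coordinates and the restriction of the determinant character from the common quadratic space $V$.

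There is one small slip in your planned verification. On $A_{D_8(-1)}$ the two spinor classes have $q\equiv0$ and the vector class has $q\equiv1\pmod 2$, not ``$0,1,1$ on the spinor classes''. Your conclusion that the isotropic classes are exactly the two spinor classes is nonetheless correct.
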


\begin{proof}
The overlattice \(T_E\supset T_D\) corresponds to the isotropic subgroup \(K:=T_E/T_D\subset A_{T_D}\) of order two. By Remark~\ref{rem:double-cover}, \(\Gamma_D^{H_4}=\widetilde{\mathrm O}^+(T_D)\). Every element of this discriminant kernel fixes \(K\) and therefore extends to an isometry of \(T_E\) \cite{NikulinIntegral}. This proves the inclusion of groups; it has finite index because both groups are arithmetic in \(\mathrm O^+(V)\). Since the lattice inclusion is a Hodge inclusion, it preserves the period line. The final statements follow by choosing one rational tube-domain basis for \(V\); the determinant character in both cases is the determinant of the action on this common quadratic space.
\end{proof}

\begin{proposition}
\label{prop:polynomial_map}
The relative Jacobian of the genus-one fibration \eqref{eq:alternate-genus-one-quartic} determines a weighted-homogeneous polynomial map of affine coefficient cones
\begin{equation*} \Theta\colon\mathbf D\longmapsto\mathbf C(\mathbf D), \end{equation*}
given by weighted-homogeneous polynomials \(C_k=C_k(F,G,H)\). The ordinary coefficient Jacobian
\begin{equation}
\label{eq:R128-definition}
J_\Theta:=\operatorname{Jac}(\Theta)=\det\left(\frac{\partial C_j}{\partial D_i}\right)_{1\leq i,j\leq11}
\end{equation}
is weighted homogeneous of weight \(128\). In the compatible tube-domain coordinates of Lemma~\ref{lem:relative-Jacobian-period-map}, one has
\begin{equation}
\label{eq:Phi252-pullback}
J\bigl(\mathbf C(\mathbf D)\bigr)=J_\Theta J(\mathbf D) \quad \Leftrightarrow \quad
\Theta^*\Phi_{252}\doteq J_\Theta\phi_{124}.
\end{equation}
\end{proposition}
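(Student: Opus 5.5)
The argument has three parts: build $\Theta$ explicitly from the binary-quartic invariants, read off the weight of $J_\Theta$ from weight bookkeeping, and obtain the pullback identity from the chain rule.

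\emph{Construction of $\Theta$.} Start from the relative Jacobian \eqref{eq:relative-jacobian-IJ}. Put
\[
f(s)=-\tfrac13\mathcal I(s),\qquad g(s)=-\tfrac1{27}\mathcal J(s),
\]
where $\mathcal I,\mathcal J$ are computed from the coefficients $A_i(s)$ of Proposition~\ref{prop:alt_fib}. The degrees \eqref{eq:alternate-quartic-degrees} show that $\deg_s f\le 4$ and $\deg_s g\le 7$. The term $-27A_1^2A_4$ contributes $s^4\cdot s^3$, and the other products have lower degree, so the leading coefficient of $g$ is a nonzero constant, normalized to $1$ as in the proof of Proposition~\ref{thm:Sakai-comparison}.

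Next remove the $s^6$ term of $g$ by the translation $t=s+c$, with $c$ a constant multiple of $F_6$. This is the shift $t=s+F_6/7$ already used in that proof. After it, $f$ and $g$ have exactly the shape \eqref{eq:UE8-general}, and the coefficients $C_k$ are polynomials in the entries of $\mathbf D$. Assign weight $2$ to $s$, $4$ to $\xi$ and $6$ to $\eta$; the $A_i$ are then weighted homogeneous, with $\deg A_i=6-2i$ up to the $H_4$-scaling. Consequently each coefficient of $t^j$ in the Weierstrass equation is weighted homogeneous of the weight $k$ indicated by \eqref{eq:UE8-general}. This gives the weighted-homogeneous polynomial map $\Theta$.

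\emph{Weight of $J_\Theta$.} The entry $\partial C_j/\partial D_i$ has weight $c_j-d_i$. Every term in the expansion of the determinant therefore has weight
\[
\textstyle\sum_j c_j-\sum_i d_i=242-114=128 .
\]
Hence $J_\Theta$ is weighted homogeneous of weight $128$, or identically zero; the latter is excluded below.

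\emph{The identity \eqref{eq:Phi252-pullback}.} Use the compatible tube-domain coordinates $Z=(z_1,\dots,z_{10})$ of Lemma~\ref{lem:relative-Jacobian-period-map}, in which the finite morphism of quotients is the identity. In these coordinates the geometric $C_k$ of the relative Jacobian are the modular forms $C_k(\mathbf D(Z))$; this is the Hashimoto--Ueda theorem combined with the period identification of the lemma. The modular Jacobian of Proposition~\ref{prop:Jacobian-general} is, up to a constant, the determinant of the $11\times11$ matrix whose rows are $(k_iF_i,\partial_{z_1}F_i,\dots,\partial_{z_{10}}F_i)$ (Euler row plus gradients).

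Apply the chain rule: $\partial_{z}C_j=\sum_i(\partial C_j/\partial D_i)\,\partial_z D_i$. Euler's identity for weighted-homogeneous polynomials gives $c_jC_j=\sum_i(\partial C_j/\partial D_i)\,d_iD_i$. So the matrix of $\mathbf C$ is $(\partial C/\partial D)$ times the matrix of $\mathbf D$. Taking determinants gives $J(\mathbf C(\mathbf D))=J_\Theta\,J(\mathbf D)$, and with \eqref{eq:D8-E8-modular-Jacobians} this becomes $\Theta^*\Phi_{252}\doteq J_\Theta\phi_{124}$. As a consistency check, the weights add up: $128+124=252$. The characters also match, since by the lemma $\det$ on $\Gamma_E$ restricts to $\det$ on $\Gamma_D^{H_4}$.

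\emph{Main obstacle.} The hard step is showing $J_\Theta\not\equiv0$, equivalently that $\Theta$ is dominant. Without it the identity is degenerate, and it is needed to call the weight $128$. I would first try to argue that $\Phi_{252}\ne0$ while $\Theta^*\Phi_{252}$ is a nonzero pullback. This holds because the map of quotients in Lemma~\ref{lem:relative-Jacobian-period-map} is finite, so $\mathbf C(\mathbf D(Z))$ has generically full rank. As a fallback, I would evaluate $J_\Theta$ at one explicit rational point of the $\mathbf D$-cone.
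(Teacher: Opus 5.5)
Your route is the paper's: compute $\mathcal I,\mathcal J$ of the alternate quartic, remove the $s^6$ term of $g$ by $t=s+F_6/7$, and combine the weighted Euler identity with the chain rule. This shows that the modular-Jacobian matrix of $\mathbf C$ is the coefficient Jacobian matrix times that of $\mathbf D$; taking determinants and counting weights gives $\operatorname{wt}(J_\Theta)=242-114=128$. However, the step that establishes that the $C_k$ are weighted homogeneous of weight $k$ contains a concrete error, and your Euler identity $c_jC_j=\sum_i d_iD_i\,\partial C_j/\partial D_i$ depends on exactly that step. With $\operatorname{wt}(s)=2$, the coefficient $A_4(s)=s^3+F_6s^2+G_{12}s+H_{18}$ has terms of weights $6,10,14,18$, and $A_0(s)=2H_4s+H_{10}$ has terms of weights $6,10$. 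So the $A_i$ are not weighted homogeneous at all. Moreover, ``$\deg A_i=6-2i$'' would give $A_4$ negative weight, and weights $4,6$ for $\xi,\eta$ are incompatible with the invariants. As written, nothing guarantees that the coefficient of $t^j$ has the Hashimoto--Ueda weight.

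The repair is the paper's grading: $\operatorname{wt}(u)=2$, $\operatorname{wt}(s)=6$, $\operatorname{wt}(w)=9$. Each $u^{4-i}A_i$ must then have weight $18=\operatorname{wt}(w^2)$, and indeed \eqref{eq:alternate-quartic-coefficients} shows that $A_i(s)$ is homogeneous of weight $10+2i$. Hence $\operatorname{wt}(\mathcal I)=28$, $\operatorname{wt}(\mathcal J)=42$, $\operatorname{wt}(\xi)=14$ and $\operatorname{wt}(\eta)=21$. Since $t=s+F_6/7$ is homogeneous of weight $6$, the coefficient of $t^j$ in $f$ (resp.\ $g$) has weight $28-6j$ (resp.\ $42-6j$). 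These are exactly $4,10,16,22,28$ and $12,18,24,30,36,42$. With this correction your argument coincides with the paper's.

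Two minor points. First, the top coefficient of $g$ comes only from $A_1^2A_4$ and is identically $1$, so no normalization is involved. Second, $J_\Theta\not\equiv0$ is not needed for the stated identity, and the paper does not discuss it. Your argument for it is fine once phrased this way: the Hashimoto--Ueda generators are algebraically independent, so $J(\mathbf C)=\kappa\Phi_{252}\neq0$, and the identity then forces $J_\Theta(\mathbf D(Z))\not\equiv0$.
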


\begin{proof}
Assign weights
\begin{equation*} \operatorname{wt}(u)=2,\qquad \operatorname{wt}(s)=6,\qquad \operatorname{wt}(w)=9. \end{equation*}
The formulas in \eqref{eq:alternate-quartic-coefficients} show that \(A_i(s)\) is weighted homogeneous of weight \(10+2i\).  It follows from \eqref{eq:I-J-of-s} that
\begin{equation*} \operatorname{wt}(\mathcal I)=28,\qquad \operatorname{wt}(\mathcal J)=42. \end{equation*}
The degree pattern \(\deg_s(A_4,A_3,A_2,A_1,A_0)=(3,2,1,2,1)\) also gives
\begin{equation*} \deg_s\mathcal I=4,\qquad \deg_s\mathcal J=7 \end{equation*}
for general coefficients.  More precisely,
\begin{equation*} A_1(s)=s^2+O(1),\qquad A_4(s)=s^3+F_6s^2+O(s). \end{equation*}
Among the summands in \(\mathcal J\), only \(-27A_1^2A_4\) can have degree at least six.  Therefore
\begin{equation}
\label{eq:J-leading-terms}
-\frac1{27}\mathcal J(s)=s^7+F_6s^6+O(s^5).
\end{equation}
The weighted translation \(s=t-F_6/7\) eliminates the coefficient of \(t^6\).  Define
\begin{equation}
\label{eq:HU-f-g-definition}
f_{\mathrm{HU}}(t):=-\frac13\mathcal I\left(t-\frac{F_6}{7}\right),\qquad g_{\mathrm{HU}}(t):=-\frac1{27}\mathcal J\left(t-\frac{F_6}{7}\right).
\end{equation}
Then \(\deg_t f_{\mathrm{HU}}=4\), \(\deg_t g_{\mathrm{HU}}=7\), the leading coefficient of \(g_{\mathrm{HU}}\) is one, and its \(t^6\)-coefficient vanishes.  Comparison with \eqref{eq:UE8-general} yields
every \(C_k\) as a polynomial in the entries of \(\mathbf D\), and the weight computation above shows that it is weighted homogeneous of weight \(k\).

It remains to verify the modular-Jacobian identity.  Write \(D_i\) and \(C_j\) for the entries of \(\mathbf D\) and \(\mathbf C\), let \(d_i=\operatorname{wt}(D_i)\) and \(c_j=\operatorname{wt}(C_j)\), and put
\begin{equation*} P_{ij}:=\frac{\partial C_j}{\partial D_i}. \end{equation*}
Weighted Euler homogeneity gives
\begin{equation*} c_jC_j=\sum_{i=1}^{11}d_iD_iP_{ij}. \end{equation*}
For every tube-domain coordinate \(z_\nu\), the ordinary chain rule gives
\begin{equation*} \partial_{z_\nu}C_j=\sum_{i=1}^{11}(\partial_{z_\nu}D_i)P_{ij}. \end{equation*}
If \(M_{\mathbf D}\) and \(M_{\mathbf C}\) denote the matrices occurring in the modular Jacobians \(J(\mathbf D)\) and \(J(\mathbf C(\mathbf D))\), these two identities say precisely that
\begin{equation*} M_{\mathbf C}=M_{\mathbf D}P. \end{equation*}
Taking determinants proves \eqref{eq:Phi252-pullback}.  Moreover, every term of \(\det P\) has weight
\begin{equation*} \sum_{j=1}^{11}c_j-\sum_{i=1}^{11}d_i=242-114=128, \end{equation*}
so \(J_\Theta\) has weight \(128\).  Finally, substituting \(J(\mathbf D)=\phi_{124}\) and \(J(\mathbf C)\doteq\Phi_{252}\) proves \eqref{eq:Phi252-pullback}.
\end{proof}
\subsubsection{The \texorpdfstring{\(A_7\)}{A7}-enhancement divisor in the
\texorpdfstring{\(U\oplus D_8(-1)\)}{U plus D8(-1)} family}
\label{ssec:A7-enhancement}
The general \(U\oplus D_8(-1)\)-polarized K3 surface was given by \eqref{eq:H+D8-weierstrass}.  We wish to characterize the sublocus for which the transcendental lattice specializes from $U^{\oplus2}\oplus D_8(-1)$ to $U^{\oplus2}\oplus A_7(-1)$. An explicit model for the latter family is
\begin{equation}
\label{eq:A7-family-2}
y^2
=
x^3
+
\bigl(u^3+uf_4+f_6\bigr)x^2
+
\bigl(u^4g_4+u^3g_6+u^2g_8+ug_{10}+g_{12}\bigr)x
+
\bigl(u^2h_5+uh_7+h_9\bigr)^2.
\end{equation}
There are two \(\widetilde{\mathrm{O}}^+\)-inequivalent embeddings \(A_7\hookrightarrow D_8\); their union is the Heegner divisor of the weight-\(128\) Borcherds product \(\Phi_{128}\) \cite{ClingherMalmendierWilliams2026}.
In~\cite{ClingherMalmendierWilliams2026}, the coefficient specialization is made explicit. Let $h_5$ be a variable independent of the coefficients of \eqref{eq:H+D8-weierstrass}. On the chart \(H_4\neq0\), apply the substitution
\begin{equation}
\label{eqn:subi}
  x \mapsto x \pm \left( H_4 u + \frac{h_5^2 - H_{10}}{2H_4} \right).
\end{equation}
Then Vinberg's $D_8$ model coincides with the $A_7$ model~\eqref{eq:A7-family-2} if and only if the coefficients $F_k,G_k,H_k$ and $f_k,g_k,h_k$ satisfy the relations in~\cite[Equation~(5.2)]{ClingherMalmendierWilliams2026}. 
\par We observe that after the substitution~\eqref{eqn:subi}, Equation~\eqref{eq:H+D8-weierstrass} takes the form of a long Weierstrass model whose constant term is a quartic polynomial in \(u\),
\begin{equation}
\label{eqn:quartic_general}
Q(u)=b_0u^4+b_1u^3+b_2u^2+b_3u+b_4.
\end{equation}
The $A_7$ family is characterized by the condition $Q(u)=(h_5u^2+h_7u+h_9)^2$.   
The Zariski closure of the locus of quartic squares is the image of
\begin{equation}
\label{eq:quartic-square-parametrization}
(r_0,r_1,r_2)\longmapsto(b_0,b_1,b_2,b_3,b_4)=(r_0^2,2r_0r_1,r_1^2+2r_0r_2,2r_1r_2,r_2^2).
\end{equation}
Eliminating $r_0,r_1,r_2$ from~\eqref{eq:quartic-square-parametrization} shows that its homogeneous ideal is generated by the following seven cubics:
\begin{equation}
\label{eq:quartic-square-ideal}
\begin{gathered} b_3^3-4b_2b_3b_4+8b_1b_4^2,\\ b_2b_3^2-4b_2^2b_4+2b_1b_3b_4+16b_0b_4^2,\\ b_1b_3^2-4b_1b_2b_4+8b_0b_3b_4,\\ b_1^2b_4-b_0b_3^2,\\ b_1^2b_3-4b_0b_2b_3+8b_0b_1b_4,\\ b_1^2b_2-4b_0b_2^2+2b_0b_1b_3+16b_0^2b_4,\\ b_1^3-4b_0b_1b_2+8b_0^2b_3. \end{gathered}
\end{equation}
\section{Exceptional index-two Jacobian transitions of types
\texorpdfstring{$E_7$ and $E_6$}{E7 and E6}}
\label{sec:E7-E6-Jacobians}

The transition from the $D_8$ family to the Hashimoto--Ueda $E_8$
family is based on the lattice isomorphism
$U\oplus D_8(-1)\cong U(2)\oplus E_8(-1)$.  Its essential arithmetic
input is the index-two inclusion $D_8\subset E_8$, together with the
fact that the new fibration is a genus-one torsor of index two.
There are two further exceptional inclusions of the same kind:
\begin{equation}
\label{eq:exceptional-index-two-inclusions}
A_7\subset E_7,
\qquad
A_5\oplus A_1\subset E_6,
\qquad
[E_7:A_7]=[E_6:A_5\oplus A_1]=2.
\end{equation}
The determinants on the two sides of each inclusion differ by
a factor of four.  The corresponding transcendental lattices are
\begin{equation}
\label{eq:exceptional-transcendental-lattices}
\begin{aligned}
T_{A_7}&=U^{\oplus2}\oplus A_7(-1)
   \subset T_7:=U^{\oplus2}\oplus E_7(-1),\\
T_{A_5+A_1}&=U^{\oplus2}\oplus A_5(-1)\oplus A_1(-1)
   \subset T_6:=U^{\oplus2}\oplus E_6(-1).
\end{aligned}
\end{equation}
Both are index-two inclusions of even lattices.  Thus, exactly as in
Equation~\eqref{eqn:lattices}, a genus-one torsor with transcendental
lattice on the left can have a relative Jacobian with transcendental
lattice on the right.  The corresponding N\'eron--Severi lattices are
\begin{equation}
\label{eq:exceptional-Picard-lattices}
S_7=U\oplus E_8(-1)\oplus A_1(-1),
\qquad
S_6=U\oplus E_8(-1)\oplus A_2(-1).
\end{equation}

Wang and Williams proved that the relevant rings of modular forms are
free \cite[Theorem~5.9]{WangWilliamsJacobian}; see also
\cite{WangClassificationFree}.  Theorems~\ref{thm:E7-universal-model}
and~\ref{thm:E6-universal-model} below identify the geometric coefficients
with generating systems of these rings.  In this notation, the rings are
\begin{equation}
\label{eq:E7-free-ring}
M_*(\mathrm O^+(T_7))
=\mathbb C[P_4,P_6,P_{10},P_{12},P_{14},P_{16},P_{18},P_{22},P_{24},P_{30}],
\end{equation}
and
\begin{equation}
\label{eq:E6-free-ring}
M_*(\widetilde{\mathrm O}^+(T_6))
=\mathbb C[Q_4,Q_6,Q_7,Q_{10},Q_{12},Q_{15},Q_{16},Q_{18},Q_{24}].
\end{equation}
The subscripts give the modular weights.  In the $E_7$ case all ten
generators can be chosen to be orthogonal Eisenstein series.  In the
$E_6$ case the odd-weight generators can be chosen as two additive lifts of
holomorphic $W(E_6)$-invariant Jacobi forms of weights $7$ and $15$
and index one \cite[Section~5.3]{WangWilliamsJacobian}; see
\cite{SakaiEnJacobi,Wirthmuller} for background on the associated weak Jacobi forms.

\subsection{The relative Jacobian of the \texorpdfstring{$A_7$}{A7}
family}
We start with the $A_7$ model in Equation~\eqref{eq:A7-family-2}.  We now
regard $x$ as the base coordinate and $u$ as the coordinate on the
quartic fiber.  The same surface is then a genus-one fibration
\begin{equation}
\label{eq:A7-exchanged-quartic}
w^2=A_0(x)u^4+A_1(x)u^3+A_2(x)u^2+A_3(x)u+A_4(x),
\end{equation}
where
\begin{equation}
\label{eq:A7-exchanged-coefficients}
\begin{aligned}
A_0(x)&=g_4x+h_5^2,\\
A_1(x)&=x^2+g_6x+2h_5h_7,\\
A_2(x)&=g_8x+h_7^2+2h_5h_9,\\
A_3(x)&=f_4x^2+g_{10}x+2h_7h_9,\\
A_4(x)&=x^3+f_6x^2+g_{12}x+h_9^2.
\end{aligned}
\end{equation}
Let $\mathcal I(x)$ and $\mathcal J(x)$ be the binary-quartic
invariants in Equation~\eqref{eq:I-J-of-s}, and put
\begin{equation}
\label{eq:E7-D-definition}
D=h_7^2-4h_5h_9.
\end{equation}
At $x=0$ the quartic in \eqref{eq:A7-exchanged-quartic} is
$(h_5u^2+h_7u+h_9)^2$, and 
\begin{equation}
\label{eq:square-quartic-invariants}
\mathcal I(0)=D^2,
\qquad
\mathcal J(0)=-2D^3.
\end{equation}
The relative Jacobian is in short Weierstrass form
\begin{equation*}
\eta^2=\xi^3-\frac13\mathcal I(x)\xi
                    -\frac1{27}\mathcal J(x).
\end{equation*}
After the translation $\xi=T+D/3$, it becomes
\begin{equation}
\label{eq:E7-long-from-invariants}
\eta^2=T^3+DT^2+B_7(x)T+C_7(x),
\end{equation}
where
\begin{equation}
\label{eq:E7-BC-from-invariants}
B_7(x)=\frac{D^2-\mathcal I(x)}{3},
\qquad
C_7(x)=\frac{D^3-3D\mathcal I(x)-\mathcal J(x)}{27}.
\end{equation}
We obtain
\begin{equation}
\label{eq:E7-polynomial-shapes}
\begin{aligned}
B_7(x)&=P_4x^4+P_{10}x^3+P_{16}x^2+P_{22}x,\\
C_7(x)&=x^7+P_6x^6+P_{12}x^5+P_{18}x^4
             +P_{24}x^3+P_{30}x^2,
\end{aligned}
\qquad P_{14}=D.
\end{equation}
Thus, the relative Jacobian over $\mathbb{P}^1_{(x)}$ has the following normal form:
\begin{equation}
\label{eq:E7-universal-model}
\begin{aligned}
\mathcal X_7:\quad \eta^2={}T^3&+P_{14}T^2
 +(P_4x^4+P_{10}x^3+P_{16}x^2+P_{22}x)T\\
&+x^7+P_6x^6+P_{12}x^5+P_{18}x^4
       +P_{24}x^3+P_{30}x^2.
\end{aligned}
\end{equation}
We have the following:
\begin{theorem}
\label{thm:E7-universal-model}
For very general coefficients, the minimal resolution of
\eqref{eq:E7-universal-model} is an $S_7$-polarized K3 surface with
\begin{equation*}
\operatorname{NS}(\mathcal X_7)\cong
U\oplus E_8(-1)\oplus A_1(-1),
\qquad
T(\mathcal X_7)\cong U^{\oplus2}\oplus E_7(-1).
\end{equation*}
Equation~\eqref{eq:E7-universal-model} has singular fibers $\mathrm{II}^*+\mathrm I_2+12\mathrm I_1$ and trivial Mordell--Weil group.  
With the grading $\operatorname{wt}(x)=6$, $\operatorname{wt}(T)=14$, $\operatorname{wt}(\eta)=21$, the ten coefficients have weights
\begin{equation*}
4,6,10,12,14,16,18,22,24,30.
\end{equation*}
After normalization, they form a free generating system for the ring in Equation~\eqref{eq:E7-free-ring}.
\end{theorem}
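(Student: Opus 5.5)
The proof splits into four parts: the fiber configuration read off from the normal form, the lattice computation, the identification of the family as the relative Jacobian of the $A_7$ torsor, and the modular statement obtained via the period map.

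\textbf{Fiber configuration.} First we check the grading. Assign $\operatorname{wt}(u)=2$, $\operatorname{wt}(x)=6$, $\operatorname{wt}(w)=9$ to the exchanged quartic \eqref{eq:A7-exchanged-quartic}. Then $A_i(x)$ has weight $10+2i$ and $D=P_{14}$ has weight $14$. By \eqref{eq:E7-BC-from-invariants}, $B_7$ and $C_7$ have weights $28$ and $42$, so each $P_k$ has weight $k$.

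Next we read the singular fibers from \eqref{eq:E7-universal-model}. At $x=\infty$, pass to $\sigma=1/x$ using the usual $\mathbb P(1,4,6)$ rescaling, namely $T\mapsto T/\sigma^4$ and $\eta\mapsto\eta/\sigma^6$. Since $\deg B_7\le 4$, $\deg C_7=7$, and the $T^2$-coefficient is constant, the long-form data vanish to orders $(a_2,a_4,a_6)=(\ge 2,\ge 4,5)$. This is a $\mathrm{II}^*$ fiber by Tate's algorithm.

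At $x=0$, $B_7(0)=C_7(0)=0$, so the cubic is $T^2(T+P_{14})$ and the fiber is multiplicative. Because $C_7$ vanishes to order exactly $2$ and $B_7$ to order $1$, the discriminant vanishes to order $2$. Hence the fiber at $x=0$ is $\mathrm I_2$. The total Euler number is $24$; subtracting $10$ for $\mathrm{II}^*$ and $2$ for $\mathrm I_2$ leaves $12\,\mathrm I_1$ generically. This can be checked by computing the discriminant degree and showing it is squarefree at one explicit specialization.

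\textbf{Lattices and Mordell--Weil group.} By Shioda--Tate, the trivial lattice is $U\oplus E_8(-1)\oplus A_1(-1)$, of rank $11$. For very general coefficients the Picard number is $11$, since the family has $10$ effective moduli: $10$ weighted parameters modulo nothing, as the translations are already fixed. That count must be matched with the dimension of the period domain, $20-11=9$, plus one for the scaling. Torsion in the Mordell--Weil group is impossible because the trivial lattice has discriminant $2$, which is squarefree, and $E_8\oplus A_1$ admits no even overlattice. Hence $\operatorname{MW}$ is trivial and $\operatorname{NS}\cong S_7$. The transcendental lattice is then the unique even lattice of signature $(2,9)$ with discriminant form opposite to that of $A_1(-1)$, namely $U^{\oplus2}\oplus E_7(-1)$.

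\textbf{Relative-Jacobian description.} We also identify $\mathcal X_7$ as the relative Jacobian of the index-two torsor coming from the $A_7$ family, with $T_{A_7}\subset T_7$ of index two, exactly as in Lemma~\ref{lem:relative-Jacobian-period-map}. This gives a finite morphism of period quotients compatible with the determinant character.

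\textbf{Modular statement (main obstacle).} To prove that the $P_k$ form a free generating system, I would argue as follows. By Proposition~\ref{prop:Jacobian-general} and the Wang--Williams criterion, it suffices to show that the $P_k$ are modular forms for $\mathrm O^+(T_7)$ of the stated weights, that they are algebraically independent, and that their modular Jacobian is nonzero of weight $\sum k+10=166$. That weight is the weight of the Jacobian form in \cite{WangWilliamsJacobian}.

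Modularity comes from the universal model. Inverting the period map on the open set where the Weierstrass normal form is unique up to weighted scaling $x\mapsto\lambda^6x$, the coefficients extend over the domain as sections of $\mathcal L^{k}$. The removed locus has codimension at least two, or is the discriminant, where extension follows by Koecher's principle together with boundedness at the $\mathrm{II}^*$-preserving degenerations.

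Algebraic independence follows from dominance. The map from coefficient space to the period quotient is quasi-finite and generically injective modulo $\mathbb C^\times$, because the normal form is canonical. Comparing Hilbert series with \eqref{eq:E7-free-ring} then forces equality of the rings.

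The hardest point is proving that the coefficient map is dominant and generically injective, equivalently, that no residual automorphism of the normal form survives. I would first try to show directly that every $S_7$-polarized K3 surface has a unique $\mathrm{II}^*+\mathrm I_2$ fibration up to automorphism. Having placed $\mathrm I_2$ at $x=0$ and $\mathrm{II}^*$ at $\infty$, the only remaining freedom is the weighted $\mathbb C^\times$-scaling.
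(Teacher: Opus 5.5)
\textbf{Gap: holomorphy of the $P_k$ across interior divisors.} Your fiber, lattice and Mordell--Weil steps match the paper. The modular part, however, has a real gap. Inverting the period map defines $P_k$ only on the locus covered by the generic geometric description. Its complement contains interior Heegner divisors: the $(-2)$-hyperplanes, and the $E_6$ divisor $P_{22}^2-4P_{14}P_{30}=0$, where $\mathrm I_2$ becomes $\mathrm I_3$. These are divisors, not codimension-two sets.

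Koecher's principle only controls behaviour at the Baily--Borel boundary and says nothing about interior divisors. The \emph{boundedness} you invoke would require a properness or surjectivity statement for the period map of the coefficient space, which you do not supply. Without holomorphy you do not know $P_k\in M_k(\mathrm O^+(T_7))$, so the Hilbert-series comparison cannot start.

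The paper closes this gap with the relative-Jacobian step you mention but never use. By \eqref{eq:E7-D-definition} and \eqref{eq:E7-BC-from-invariants}, every $P_k$ is a polynomial in the ten $A_7$ coefficient forms. These are already known to be holomorphic on the entire affine period cone of $T_{A_7}$ \cite[Theorem~1.1(ii)]{ClingherMalmendierWilliams2026}, and the index-two inclusion $T_{A_7}\subset T_7$ identifies that cone with the cone of $T_7$. Only after this does the paper check invariance and weight on the generic locus, using:
lattice-polarized Torelli;
uniqueness of the normal form;
$\widetilde{\mathrm O}^+(T_7)=\mathrm O^+(T_7)$, since $A_{T_7}\cong\mathbb Z/2\mathbb Z$;
$dx\wedge dT/\eta\mapsto\lambda^{-1}\,dx\wedge dT/\eta$ under the weighted scaling.
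It then extends by the identity theorem and uses Koecher only at the cusps.

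\textbf{Smaller points.} First, the step you call hardest is short and does not need uniqueness of the fibration. A polarization-preserving isomorphism fixes $0$ and $\infty$, so it has the form $x\mapsto ax$, $T\mapsto b^2T+r(x)$, $\eta\mapsto b^3\eta$. Constancy of the $T^2$-coefficient forces $r$ to be constant, the node of the $\mathrm I_2$ fiber at the origin forces $r=0$, and the monic $x^7$ term gives $b^6=a^7$. This yields the nine effective moduli you need for Picard rank $11$.

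Second, the $\mathrm I_2$ fiber is a genericity statement. The $x^2$-coefficient of the discriminant is $P_{14}^2(P_{22}^2-4P_{14}P_{30})$, so the vanishing orders of $B_7$ and $C_7$ alone do not give it.

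Third, the modular-Jacobian detour is unnecessary, and its weight is wrong. The domain has dimension $9$, so the weight is $156+9=165$, matching $\Phi_{165,E_7}$, not $166$. Given the Wang--Williams freeness theorem, modularity plus algebraic independence already suffice via the Hilbert series.
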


\begin{proof}
At $x=\infty$, set $s=1/x$ and make the minimal change of variables
$T=s^{-4}\widetilde T$, $\eta=s^{-6}\widetilde\eta$.  The resulting
vanishing orders are those of a fiber of type $\mathrm{II}^*$.  At
$x=0$, the cubic on the right side of \eqref{eq:E7-universal-model}
is $T^2(T+P_{14})$.  The cubic discriminant has expansion
\begin{equation}
\label{eq:E7-local-discriminant}
\operatorname{disc}_T
\bigl(T^3+P_{14}T^2+B_7(x)T+C_7(x)\bigr)
=P_{14}^2\bigl(P_{22}^2-4P_{14}P_{30}\bigr)x^2+O(x^3).
\end{equation}
It follows that the fiber over zero is generically of type
$\mathrm I_2$.  The remaining Euler number is twelve, and the
remaining singular fibers are nodal for a very general coefficient
vector. The fiber, zero section, and components of the $\mathrm{II}^*$ and
$\mathrm I_2$ fibers generate
$U\oplus E_8(-1)\oplus A_1(-1)$.
The polarizing lattice is generated by the classes of the fiber, zero section, and
nonidentity components of the reducible fibers.  An isomorphism preserving this polarization fixes $0$ and $\infty$ as the base points of
these fibers.  Between two Weierstrass models it has the form
\begin{equation*}
x\longmapsto ax,\qquad
T\longmapsto b^2T+r(x),\qquad
\eta\longmapsto b^3\eta,
\end{equation*}
with $a,b\neq0$.  The constant coefficient of $T^2$ forces $r$ to be
constant, and the node at $(x,T,\eta)=(0,0,0)$ forces $r=0$.
The monic $x^7$ term gives $b^6=a^7$, and we obtain
\begin{equation*}
(x,T,\eta)\longmapsto
(\lambda^6x,\lambda^{14}T,\lambda^{21}\eta).
\end{equation*}
Thus the coefficient space modulo this scaling is generically finite
over the moduli space of the indicated lattice-polarized surfaces and
has dimension nine.  The very general member consequently has Picard
rank eleven.  Since the constructed trivial lattice has
discriminant two, it admits no proper integral overlattice; hence it
is the full N\'eron--Severi lattice and the Mordell--Weil group is
trivial.  Its orthogonal complement in the K3 lattice is $T_7$.

The old elliptic fiber supplies a bisection of
\eqref{eq:A7-exchanged-quartic}. A section is excluded by the discriminants: the torsor has N\'eron--Severi discriminant eight, whereas the displayed Jacobian trivial lattice has discriminant two. Ogg--Shafarevich theory therefore
identifies $T_{A_7}$ with the kernel of a nonzero two-torsion Brauer
class on the relative Jacobian, as in
\cite[Lemma~4.4 and Equation~(5.1)]{MeinsmaShinder}.  The unique even
index-two overlattice is $T_7$.  Thus the relative-Jacobian period map
is generically finite.

We next establish holomorphy on the whole period domain.  By
\cite[Theorem~1.1(ii) and Section~4.7]{ClingherMalmendierWilliams2026},
the ten $A_7$ coefficient forms are holomorphic on the affine period
cone of $T_{A_7}$.  The index-two inclusion identifies this cone with
that of $T_7$.  Equations~\eqref{eq:E7-D-definition} and
\eqref{eq:E7-BC-from-invariants}, or the expanded formulas in
Appendix~\ref{app:E7-coefficient-dictionary}, express every $P_k$ as a
homogeneous polynomial in those forms.  Hence every $P_k$ is
holomorphic on the entire cone, including the divisors omitted from
the generic geometric description. To check descent to the larger arithmetic group, fix the primitive embedding $S_7\hookrightarrow
\Lambda_{\mathrm{K3}}$.  Its pointwise stabilizer induces
$\widetilde{\mathrm O}^+(T_7)$ on the orthogonal complement; this is
$\mathrm O^+(T_7)$ because the discriminant group has order two
\cite{NikulinIntegral,DolgachevMirror}.  On the generic locus, the
lattice-polarized Torelli theorem and the uniqueness of the Weierstrass normal form
identify equivalent periods with the same normal form up to scaling.
The holomorphic two-form $\tfrac{dx\wedge dT}{\eta}$ 
scales by $\lambda^{-1}$.  The canonical identification of invariant
differentials on a genus-one curve and its Jacobian identifies the holomorphic two-form
with the torsor differential up to a fixed nonzero numerical
factor; see \cite{FisherGenusOneInvariants}.  Thus on the affine period
cone, the coefficient of weight $k$ also satisfies
$P_k(c\omega)=c^{-k}P_k(\omega)$ and is invariant under
$\mathrm O^+(T_7)$.  The invariance then holds everywhere by the identity theorem.  Interior
holomorphy has already been proved, and Koecher's principle gives
holomorphy at the cusps.  Hence $P_k\in M_k(\mathrm O^+(T_7))$.

The nine-dimensional moduli image together with the nontrivial Hodge
scaling proves algebraic independence of the ten coefficients.
Equivalently, differentiating the formulas in
Appendix~\ref{app:E7-coefficient-dictionary} in the input order
$(f_4,f_6,g_4,g_6,g_8,g_{10},g_{12},h_5,h_7,h_9)$ gives rank ten at
$(1,2,\ldots,10)$.  Their weights agree with the free generator weights
in \eqref{eq:E7-free-ring}.  The resulting polynomial subalgebra
therefore has the same Hilbert series as the full ring, and equality
holds in every weight.  
\end{proof}

\subsection{The \texorpdfstring{$E_6$}{E6} Heegner divisor and the two
odd generators}

Equation~\eqref{eq:E7-local-discriminant} gives the
enhancement from $A_1$ to $A_2$.  Define the weight-$44$ polynomial
\begin{equation}
\label{eq:E7-Phi44-polynomial}
\mathcal B_{44}:=P_{22}^2-4P_{14}P_{30}.
\end{equation}
On the open set $P_{14}\neq0$, the condition $\mathcal B_{44}=0$
generically enhances the fiber at $x=0$ from $\mathrm I_2$ to $\mathrm I_3$.
The resulting general K3 surface has N\'eron--Severi and transcendental
lattices $S_6$ and $T_6$, respectively.  On the graded coefficient cone for the discriminant kernel, the rank-one condition in \eqref{eq:E7-Phi44-polynomial} has the
parametrization
\begin{equation}
\label{eq:E6-parametrization-in-E7}
P_{14}=Q_7^2,
\qquad
P_{22}=2Q_7Q_{15},
\qquad
P_{30}=Q_{15}^2.
\end{equation}
Restricting the common even generators by $P_k=Q_k$ gives the equation
\begin{equation}
\label{eq:E6-universal-model}
\begin{aligned}
\mathcal X_6:\quad \eta^2={} T^3 & +Q_7^2T^2
 +(Q_4x^4+Q_{10}x^3+Q_{16}x^2+2Q_7Q_{15}x)T\\
&+x^7+Q_6x^6+Q_{12}x^5+Q_{18}x^4
       +Q_{24}x^3+Q_{15}^2x^2.
\end{aligned}
\end{equation}
We will write $B_6(x)$ and $C_6(x)$ for the coefficient of $T$ and the constant term, respectively, on the right side of \eqref{eq:E6-universal-model}. Equivalently, we can shift $\eta= \tilde{\eta} + Q_7 T + Q_{15} x$ to obtain 
\begin{equation}
\label{eq:E6-universal-model_alt}
\begin{aligned}
\mathcal X_6:\quad \tilde\eta^2
+ 2 \big(Q_{15} x + Q_7 T \big) \tilde\eta  & ={} T^3 +\big(Q_4x^2+Q_{10}x+Q_{16}\big)x^2T\\
&+\big(x^4+Q_6x^3+Q_{12}x^2+Q_{18}x +Q_{24}\big)x^3.
\end{aligned}
\end{equation}
This shows that there is a suitable normal form where each generator appears by itself.
\par The specialization from the $A_7$ equation to the
$A_5\oplus A_1$ equation can be made explicit.  We set
\begin{equation*}
L(u)=h_5u+h'_7,
\qquad
G(u)=g_4u^3+g'_6u^2+g'_8u+g'_{10},
\end{equation*}
and assume that the last term and the coefficient of $x$ in Equation~\eqref{eq:A7-family-2} are
\begin{equation*}
L(u)^2(u-a_2)^2,
\qquad
G(u)(u-a_2),
\end{equation*}
respectively, where $a_2$ is the position of the additional $\mathrm{I}_2$-fiber in $\mathbb{P}^1_{(u)}$.
As an $A_7$ specialization, this means
\begin{equation}
\label{eq:A5A1-to-A7-specialization}
\begin{aligned}
(h_5,h_7,h_9)_{A_7}
  &=(h_5,h'_7-a_2h_5,-a_2h'_7),\\
(g_4,g_6,g_8,g_{10},g_{12})_{A_7}
  &=(g_4,g'_6-a_2g_4,g'_8-a_2g'_6,g'_{10}-a_2g'_8,-a_2g'_{10}).
\end{aligned}
\end{equation}
In particular, the discriminant in \eqref{eq:E7-D-definition} becomes
\begin{equation}
\label{eq:C14-becomes-square}
D=(h'_7+a_2h_5)^2.
\end{equation}
Set
\begin{equation}
\label{eq:Q7-Q15-source-formulas}
Q_7=L(a_2)=h'_7+a_2h_5,
\qquad
Q_{15}=h_5G(a_2)-\frac13Q_7G'(a_2).
\end{equation}
A direct substitution into the polarized binary-quartic invariants
then gives
\begin{equation}
\label{eq:source-proof-rank-one-relation}
P_{14}=Q_7^2,
\qquad
P_{22}=2Q_7Q_{15},
\qquad
P_{30}=Q_{15}^2.
\end{equation}
In particular, the square root of the quadratic discriminant is the weight-seven
generator, while the tangent coefficient in the exchanged quartic is
the weight-fifteen generator.

\begin{theorem}
\label{thm:E6-universal-model}
For very general coefficients, the minimal resolution of
\eqref{eq:E6-universal-model} is an $S_6$-polarized K3 surface with
\begin{equation*}
\operatorname{NS}(\mathcal X_6)\cong
U\oplus E_8(-1)\oplus A_2(-1),
\qquad
T(\mathcal X_6)\cong U^{\oplus2}\oplus E_6(-1).
\end{equation*}
Equation~\eqref{eq:E6-universal-model} has singular fibers $\mathrm{II}^*+\mathrm I_3+11\mathrm I_1$ and trivial Mordell--Weil group.  The nine quantities
\begin{equation*}
Q_4,Q_6,Q_7,Q_{10},Q_{12},Q_{15},Q_{16},Q_{18},Q_{24}
\end{equation*}
form a free generating system for
$M_*(\widetilde{\mathrm O}^+(T_6))$ after normalization.
\end{theorem}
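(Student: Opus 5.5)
The proof follows the pattern of Theorem~\ref{thm:E7-universal-model}, now applied to the sublocus $\mathcal B_{44}=0$. Everything is done on the discriminant-kernel cover, where $Q_7$ and $Q_{15}$ are defined via \eqref{eq:E6-parametrization-in-E7}.

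\textbf{Step 1: fiber configuration.} The fiber at $x=\infty$ is unchanged from \eqref{eq:E7-universal-model}, so it is still of type $\mathrm{II}^*$. At $x=0$, the $x^2$-coefficient of the discriminant \eqref{eq:E7-local-discriminant} vanishes identically once \eqref{eq:E6-parametrization-in-E7} is imposed. I will compute the $x^3$-coefficient and expect it to be $P_{14}^2\cdot(\text{nonzero weight-}46\text{ expression})$, for instance containing $Q_7^4(Q_{16}Q_7^2-\dots)$. It suffices to exhibit one explicit coefficient point where this coefficient is nonzero. Since $T^2(T+Q_7^2)$ has a node at $T=0$, the fiber is $\mathrm I_3$. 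The Euler number then leaves $24-10-3=11$, and these remaining fibers are nodal for very general coefficients.

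\textbf{Step 2: lattices.} The zero section, the fiber, and the non-identity components of $\mathrm{II}^*$ and $\mathrm I_3$ generate $U\oplus E_8(-1)\oplus A_2(-1)$, which has discriminant $3$. For the dimension count I repeat the normalization argument of Theorem~\ref{thm:E7-universal-model}. The only automorphisms preserving the normal form are $(x,T,\eta)\mapsto(\lambda^6x,\lambda^{14}T,\lambda^{21}\eta)$, together with the sign $(Q_7,Q_{15})\mapsto(-Q_7,-Q_{15})$, which leaves the equation invariant. So nine parameters modulo scaling give an $8$-dimensional family, and hence Picard rank $20-8=12$. Discriminant $3$ is squarefree, so the trivial lattice has no proper even overlattice. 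Therefore $\operatorname{NS}=S_6$, the Mordell--Weil group is trivial, and $T(\mathcal X_6)\cong S_6^\perp\cong U^{\oplus2}\oplus E_6(-1)$, since $A_{E_6}$ matches $A_{A_2}$ with the opposite form.

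\textbf{Step 3: modularity.} The even coefficients are restrictions of the $P_k\in M_k(\mathrm O^+(T_7))$ to the Heegner divisor $\mathcal B_{44}=0$, and this divisor is identified with $\mathcal D(T_6)$. Restriction of modular forms to a sub-domain gives modular forms for the stabilizer, which contains $\widetilde{\mathrm O}^+(T_6)$. For $Q_7$ and $Q_{15}$ I use the explicit formulas \eqref{eq:Q7-Q15-source-formulas}. These are polynomials in the $A_5\oplus A_1$ coefficient forms, which are holomorphic by \cite{ClingherMalmendierWilliams2026}. Hodge scaling gives weights $7$ and $15$. The Torelli argument of Theorem~\ref{thm:E7-universal-model}, run with the pointwise stabilizer of $S_6$ (which is $\widetilde{\mathrm O}^+(T_6)$), gives invariance under the discriminant kernel. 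The nontrivial element $-1$ of $\mathrm O(q_{T_6})$ acts by the sign change, which is consistent with $Q_7,Q_{15}$ having odd weight and only $\widetilde{\mathrm O}^+$ being used. Koecher's principle then gives holomorphy at the cusps.

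\textbf{Step 4: freeness.} This is where the weight comparison enters. The eight-dimensional moduli image together with the nontrivial weight scaling gives algebraic independence of the nine forms. Equivalently, I can check directly that the Jacobian of the map from the $A_5\oplus A_1$ source parameters of \eqref{eq:A5A1-to-A7-specialization} has rank nine at a random point. The weights $4,6,7,10,12,15,16,18,24$ coincide with those in \eqref{eq:E6-free-ring}. Hence the polynomial subalgebra has the Hilbert series of the full ring, and equality holds.

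\textbf{Main obstacle.} I expect the hardest part to be Step 3 for the odd forms. One has to show that $Q_7$ and $Q_{15}$, initially defined on the source family, are single-valued and holomorphic on all of $\mathcal D(T_6)$ with the correct action of the group. The plan is to use the explicit polynomial expressions and check that the choice of square root is exactly absorbed by the discriminant kernel.
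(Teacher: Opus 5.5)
Your Steps 1, 2 and 4 match the paper's argument. One aside on Step 1: the $x^3$-coefficient of the local discriminant is $4Q_7^3\mathcal B_{45}$, with $\mathcal B_{45}=Q_{15}^3+Q_7^2Q_{15}Q_{16}-Q_7^3Q_{24}$, of weight $66=84-18$. Your guess of $P_{14}^2=Q_7^4$ times a weight-$46$ form is inconsistent with homogeneity. Since you only need nonvanishing at one point, this does no harm.

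The genuine gap is in Step 3, exactly at the obstacle you flag, and your plan does not close it. You assert that $Q_7$ and $Q_{15}$ are polynomials in holomorphic ``$A_5\oplus A_1$ coefficient forms,'' but nothing supports this. The cited holomorphy covers only the ten $A_7$ coefficient forms on the $T_{A_7}$ cone. The quantities $a_2$, $h'_7$, $g'_k$ in \eqref{eq:Q7-Q15-source-formulas} are obtained differently. One restricts those $A_7$ forms to $T_{A_5+A_1}=T_{A_7}\cap T_6$ and then splits off the common factor $u-a_2$ of $h(u)=h_5u^2+h_7u+h_9$ and $g(u)=g_4u^4+\cdots+g_{12}$. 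A priori $a_2$ is just a common root, not known to be single-valued or holomorphic.

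The missing argument runs as follows. On a dense open subset, $g$ and $h$ have a unique common root. The linear remainder $R_1u+R_0$ of $g$ modulo $h$ then gives $a_2=-R_0/R_1$, which is rational in the restricted forms. Hence $Q_7$ and $Q_{15}$ are single-valued meromorphic functions. By \eqref{eq:source-proof-rank-one-relation}, $Q_7^2=P_{14}|_{T_6}$ and $Q_{15}^2=P_{30}|_{T_6}$ are holomorphic, so neither odd form can have a pole. Normality handles the remaining loci of codimension at least two.

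The descent also cannot be a verbatim rerun of the $E_7$ argument. Equation~\eqref{eq:E6-universal-model} involves the odd forms only through $Q_7^2$, $Q_7Q_{15}$ and $Q_{15}^2$. So normal-form uniqueness plus the scaling of $dx\wedge dT/\eta$ gives invariance of these products only, not of $Q_7$ and $Q_{15}$ themselves. Their common sign is not encoded in the equation.

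The sign is encoded by a choice of branch $\eta=\pm(Q_7T+Q_{15}x)$ of the node over $x=0$. Equivalently, it is an ordering of the two nonidentity components of the $\mathrm I_3$ fiber; the fiberwise inversion swaps both the branches and the components. The pointwise stabilizer of $S_6$ fixes these component classes and induces exactly $\widetilde{\mathrm O}^+(T_6)$. Swapping the components acts by $-1$ on $A_{T_6}\cong\mathbb Z/3\mathbb Z$. Choosing the ordering to agree with \eqref{eq:Q7-Q15-source-formulas} at one generic point then ties this intrinsic sign to your source-defined functions. Your sentence that the nontrivial element of $\mathrm O(q_{T_6})$ acts by the sign change states the outcome of this argument, not its mechanism.
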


\begin{proof}
The fiber at infinity is again of type $\mathrm{II}^*$.  At zero the
terms of lowest order in $(T,x)$ are
\begin{equation*}
Q_7^2T^2+2Q_7Q_{15}xT+Q_{15}^2x^2
=(Q_7T+Q_{15}x)^2.
\end{equation*}
More precisely, put
\begin{equation}
\label{eq:E6-Phi45-polynomial}
\mathcal B_{45}
=Q_{15}^3+Q_7^2Q_{15}Q_{16}-Q_7^3Q_{24}.
\end{equation}
Then
\begin{equation}
\label{eq:E6-local-discriminant}
\operatorname{disc}_T
\bigl(T^3+Q_7^2T^2+B_6(x)T+C_6(x)\bigr)
=4Q_7^3\mathcal B_{45}x^3+O(x^4).
\end{equation}
Hence the fiber is generically $\mathrm I_3$.  The same Euler-number,
dimension, and discriminant argument used in
Theorem~\ref{thm:E7-universal-model} gives the asserted fiber
configuration, N\'eron--Severi lattice, and trivial Mordell--Weil
group.  The old fiber again supplies a bisection; a section is excluded by the corresponding discriminants twelve and three. The unique even
index-two overlattice of $T_{A_5+A_1}$ is $T_6$, so the period map of
the relative-Jacobian construction is generically finite.

For interior holomorphy, use the embedding
$T_{A_5+A_1}=T_{A_7}\cap T_6$ and restrict the $A_7$ coefficient forms
to this period subdomain; its affine cone is also that of $T_6$.
The seven even coefficients are restrictions of the
holomorphic $P_k$.  Write
\begin{equation*}
h(u)=h_5u^2+h_7u+h_9,\qquad
g(u)=g_4u^4+g_6u^3+g_8u^2+g_{10}u+g_{12}.
\end{equation*}
On a dense open subset of this subdomain, $g$ and $h$ have the unique
common root $a_2$.  Division of $g$ by $h$ gives a linear remainder
$R_1u+R_0$, with rational expressions in the coefficient forms, and
$a_2=-R_0/R_1$.  Thus $a_2$, $L=h/(u-a_2)$, and $G=g/(u-a_2)$ have
meromorphic coefficients on the cone.  In particular,
\eqref{eq:Q7-Q15-source-formulas} defines single-valued meromorphic
functions $Q_7$ and $Q_{15}$ there.  The identities
\eqref{eq:source-proof-rank-one-relation} hold meromorphically, so
$Q_7^2=P_{14}|_{T_6}$ and $Q_{15}^2=P_{30}|_{T_6}$ are holomorphic;
neither $Q_7$ nor $Q_{15}$ can therefore have a pole.
Normality of the period cone extends them holomorphically across the
remaining subsets of codimension at least two.

For descent, the $S_6$-polarization fixes the two nonidentity
components of the $\mathrm I_3$ fiber.  Their ordering
distinguishes the two branches $\eta=\pm Q_7T$ of the nodal cubic at
$x=0$, and hence fixes the simultaneous sign of $Q_7,Q_{15}$ in
\eqref{eq:E6-parametrization-in-E7}.  Choose this ordering to agree
with \eqref{eq:Q7-Q15-source-formulas} at one generic point.  The
pointwise stabilizer of $S_6$ induces exactly
$\widetilde{\mathrm O}^+(T_6)$, whereas the interchange of these
components acts nontrivially on its discriminant group.  The
same normal-form and differential argument in the proof of Theorem~\ref{thm:E7-universal-model},
while also preserving this ordering, shows invariance under the discriminant
kernel and homogeneity of weight $k$ for each $Q_k$.
The identity theorem and Koecher's principle then give
$Q_k\in M_k(\widetilde{\mathrm O}^+(T_6))$ on the whole modular variety.

The coefficient space modulo scaling has dimension eight; hence the
nine forms are algebraically independent.  An explicit check is that
their Jacobian, after \eqref{eq:A5A1-to-A7-specialization} and
\eqref{eq:Q7-Q15-source-formulas}, has rank nine at $(1,2,\ldots,9)$
in the input order
$(f_4,f_6,g_4,g'_6,g'_8,g'_{10},h_5,h'_7,a_2)$.
Comparison with the Hilbert series in \eqref{eq:E6-free-ring}
proves that they freely generate the full ring.
\end{proof}
\par Equation~\eqref{eq:E6-universal-model} depends on $Q_7$ and $Q_{15}$
only through their quadratic monomials.  This is because of the difference between the full
stabilizer of the $E_6$ Heegner divisor and its discriminant-kernel
cover.  In particular, if we let $\epsilon$ denote the nontrivial element
of $\mathrm O(A_{T_6})\cong\mathbb Z/2\mathbb Z$, then it acts by
\begin{equation*}
\epsilon(Q_7,Q_{15})=(-Q_7,-Q_{15})
\end{equation*}
and fixes the seven even generators.  Therefore, we have
\begin{equation}
\label{eq:E7-divisor-as-E6-invariants}
\frac{M_*(\mathrm O^+(T_7))}{(\mathcal B_{44})}
\cong
M_*(\widetilde{\mathrm O}^+(T_6))^{\langle\epsilon\rangle},
\end{equation}
where the homomorphism is given by
\eqref{eq:E6-parametrization-in-E7}.  In particular, the odd generators
are not individually visible on the unmarked $E_6$ divisor; the relative-Jacobian construction naturally produces their simultaneous
sign cover on the graded coefficient cone. This does not give a twofold cover of the coarse projective moduli space: the sign change is the weighted scaling $\lambda=-1$, and $-\mathrm{id}\in\mathrm O^+(T_6)$ acts trivially on the projective period domain.

\subsection{Borcherds products and modular Jacobians}

The two local discriminant calculations identify distinguished
Heegner divisors.  A primitive vector $v\in T_7$ with
\begin{equation*}
v^2=-6,
\qquad
\operatorname{div}(v)=2
\end{equation*}
has $v^\perp\cong T_6$.  Similarly, a primitive vector $w\in T_6$ with
\begin{equation*}
w^2=-12,
\qquad
\operatorname{div}(w)=3
\end{equation*}
has $w^\perp\cong U^{\oplus2}\oplus D_5(-1)$.  These are the Heegner
orbits associated with the minuscule-weight terms in the weight-zero
$E_7$ and $E_6$ Weyl-invariant weak Jacobi forms.  Their Borcherds
lifts have weights $44$ and $45$, respectively
\cite{Wirthmuller,GritsenkoReflective,WangWilliamsJacobian}.

\begin{corollary}
\label{cor:exceptional-Borcherds-polynomials}
Up to nonzero constants, the exceptional Borcherds products are
\begin{equation}
\label{eq:exceptional-Borcherds-identification}
\Phi_{44,E_7}\doteq P_{22}^2-4P_{14}P_{30},
\qquad
\Phi_{45,E_6}\doteq
Q_{15}^3+Q_7^2Q_{15}Q_{16}-Q_7^3Q_{24}.
\end{equation}
The first cuts out the $E_6$ locus inside the $E_7$ modular variety.
On the open set $Q_7\neq0$, the second generically raises the fiber at $x=0$ from
$\mathrm I_3$ to $\mathrm I_4$; its general point therefore has
\begin{equation*}
\operatorname{NS}\cong U\oplus E_8(-1)\oplus A_3(-1),
\qquad
T\cong U^{\oplus2}\oplus D_5(-1).
\end{equation*}
\end{corollary}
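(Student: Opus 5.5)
The plan is to identify each Borcherds product with the displayed polynomial by a divisor comparison: both sides are modular forms of the same weight for the same group, and they have the same zero divisor. The argument splits into three steps.

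\textbf{Step 1: the polynomials are modular forms with the right divisor.} By Theorem~\ref{thm:E7-universal-model}, $\mathcal B_{44}=P_{22}^2-4P_{14}P_{30}$ lies in $M_{44}(\mathrm O^+(T_7))$. The local discriminant expansion \eqref{eq:E7-local-discriminant} shows that, on the open set $P_{14}\neq0$, the vanishing of $\mathcal B_{44}$ is exactly the enhancement of the fiber over $x=0$ from $\mathrm I_2$ to $\mathrm I_3$. The general surface on that locus is the $S_6$-polarized surface, with transcendental lattice $T_6=v^\perp$, where $v^2=-6$ and $\operatorname{div}(v)=2$. Hence, set-theoretically, $\{\mathcal B_{44}=0\}$ contains the Heegner divisor $H(v)$.

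\textbf{Step 2: there are no other zeros, and the vanishing is simple.} I would use the parametrization \eqref{eq:E6-parametrization-in-E7}. The map $(Q_k)\mapsto(P_k)$ is a degree-two weighted map onto the hypersurface $\mathcal B_{44}=0$ in the free coefficient cone. This hypersurface is irreducible, being the image of an irreducible affine space, so $\mathcal B_{44}$ is not a product of smaller generators. It is reduced because it is linear in $P_{30}$.

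It remains to rule out extra zeros along $P_{14}=0$. Here $\mathcal B_{44}=P_{22}^2$, and $P_{14}=0$ is not contained in $\mathcal B_{44}=0$, so the zero locus of $\mathcal B_{44}$ is a single irreducible reduced divisor, and that divisor must be $H(v)$.

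The Borcherds product $\Phi_{44,E_7}$ has divisor exactly $H(v)$ with multiplicity one, by \cite{WangWilliamsJacobian}. The quotient $\Phi_{44,E_7}/\mathcal B_{44}$ is therefore a holomorphic, nowhere-vanishing modular form of weight zero with at most a character. By Koecher's principle it is constant, because the character of $\Phi_{44}$ is trivial on $\mathrm O^+(T_7)$, or at least has trivial square. This gives the first identity in \eqref{eq:exceptional-Borcherds-identification}.

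\textbf{Step 3: the $E_6$ case.} The argument follows the same lines. From \eqref{eq:E6-local-discriminant}, $\mathcal B_{45}$ is a holomorphic form of weight $45$ for $\widetilde{\mathrm O}^+(T_6)$, by Theorem~\ref{thm:E6-universal-model}. On $Q_7\neq0$ its vanishing raises $\mathrm I_3$ to $\mathrm I_4$. The general point then has $\operatorname{NS}=U\oplus E_8(-1)\oplus A_3(-1)$, with complement $U^{\oplus2}\oplus D_5(-1)=w^\perp$.

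The main obstacle is again irreducibility and reducedness of $\mathcal B_{45}$, together with control on $Q_7=0$, where $\mathcal B_{45}=Q_{15}^3$. That locus does lie in the zero set of $\mathcal B_{45}$ if $Q_{15}=0$ there, but it has codimension two, so it contributes no divisor. For irreducibility, $\mathcal B_{45}$ is linear in $Q_{24}$ with coefficient $-Q_7^3$. So any factorization would have to involve a factor of $Q_7$, and $Q_7\nmid Q_{15}^3$ in the free ring, which rules this out. Reducedness follows from the same linearity.

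Finally, I would compare with the Borcherds product $\Phi_{45,E_6}$, whose divisor is $H(w)$ with multiplicity one. Weights agree, the ratio is a unit of weight zero, and Koecher's principle makes it constant, as in Step 2. An odd-weight form cannot be invariant under $-\mathrm{id}$ without a character, so I would check that both forms carry the same character; this follows from both being odd under $\epsilon$, as noted before \eqref{eq:E7-divisor-as-E6-invariants}.
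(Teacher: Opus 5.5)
Your outline is the same as the paper's: irreducibility in the free ring, local discriminants for the lattice enhancement, divisor comparison with the Borcherds product, then Koecher. However, the step you label ``the vanishing is simple'' is not actually proved.

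\textbf{The missing multiplicity check.} Linearity of $\mathcal B_{44}$ in $P_{30}$, and of $\mathcal B_{45}$ in $Q_{24}$, shows only that each polynomial is irreducible in the ring of invariants. That means its zero locus is a reduced divisor on the quotient $\Gamma\backslash\mathcal D$, i.e.\ on the coefficient cone.

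On the period domain, the order of vanishing along $v^\perp$ equals that multiplicity times the ramification index of $\mathcal D\to\Gamma\backslash\mathcal D$ along $v^\perp$. This index is $2$ whenever $\pm\sigma_v\in\Gamma$, and that case really occurs. For example, $\chi_{10}$ is a free generator of the even Siegel ring but vanishes to order two on the diagonal. In this paper, $\Psi_{504}$ is irreducible in $\mathbb C[C_4,\ldots,C_{42}]$ yet equals $\Phi_{252}^2$.

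So, as written, nothing excludes $\operatorname{div}\mathcal B_{44}=2H(v)$, in which case $\Phi_{44,E_7}/\mathcal B_{44}$ would have a pole. The paper supplies the missing input: $\sigma_v$ and $\sigma_w$ do not preserve the lattices, because $2\operatorname{div}(v)/|v^2|=2/3$ and $2\operatorname{div}(w)/|w^2|=1/2$ are not integers. Hence there is no divisorial ramification, and the pullbacks have multiplicity one.

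Alternatively, you can invert your quotient. Once $\mathcal B_{44}$ vanishes on all of $H(v)$, the ratio $\mathcal B_{44}/\Phi_{44,E_7}$ is holomorphic of weight zero with finite character. It is therefore constant by Koecher and compactness, and multiplicity one follows afterwards. The same works for $w$.

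\textbf{Two smaller points.} First, in Step 1 the containment is argued in the wrong direction. The local discriminant shows that points of $\{\mathcal B_{44}=0,\ P_{14}\neq0\}$ acquire an $\mathrm I_3$ fiber and hence lie on $H(v)$. It does not show that a general point of $H(v)$ has an $\mathrm I_3$ fiber at $x=0$. To conclude $\{\mathcal B_{44}=0\}=H(v)$ from irreducibility of your zero locus, you also need that the Heegner divisor is irreducible on the quotient for the indicated group, as the paper uses. The same applies to $w$.

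Second, the character discussion in Step 3 is unnecessary and slightly off: $-\mathrm{id}\notin\widetilde{\mathrm O}^+(T_6)$, since it acts by $-1$ on $A_{T_6}\cong\mathbb Z/3\mathbb Z$. As in the paper, you can pass to the kernel of the finite character and conclude constancy. Equality of the characters then comes for free.
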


\begin{proof}
The two theorems identify the coefficient rings with the full
modular-form rings.  In these polynomial rings both displayed
polynomials are irreducible, so each defines a reduced irreducible
divisor.  Equations~\eqref{eq:E7-local-discriminant} and
\eqref{eq:E6-local-discriminant} identify its general point with the
stated lattice enhancement.  The corresponding Heegner divisors are
irreducible for the indicated groups, so these are their entire
supports.  There is no generic divisorial ramification here: neither
the reflection in $v$ nor that in $w$ preserves the relevant lattice,
as $2\operatorname{div}(v)/|v^2|=2/3$ and
$2\operatorname{div}(w)/|w^2|=1/2$.  Thus the pullbacks to the period
domains have multiplicity one.

The Borcherds products have these same reduced divisors and weights.
The quotients are consequently nowhere-vanishing holomorphic
functions of weight zero, possibly with a finite character.  Passing
to a finite-index subgroup kills that character.  The Baily--Borel
boundary has codimension at least two in both cases, so normality
extends these functions across the boundary.  Compactness makes
them constant.  In particular the characters agree, and the
proportionalities hold for the original groups.
\end{proof}

Finally, let $J_7$ and $J_6$ denote the modular Jacobians of the ordered
generator systems in Equations~\eqref{eq:E7-free-ring} and
\eqref{eq:E6-free-ring}.  Their weights are
\begin{equation*}
\operatorname{wt}(J_7)
=(4+6+10+12+14+16+18+22+24+30)+9=165,
\end{equation*}
and
\begin{equation*}
\operatorname{wt}(J_6)
=(4+6+7+10+12+15+16+18+24)+8=120.
\end{equation*}
The modular-Jacobian criterion and the strongly reflective products of
Gritsenko--Nikulin therefore give
\begin{equation}
\label{eq:E7-E6-modular-Jacobians}
J_7\doteq\Phi_{165,E_7},
\qquad
J_6\doteq\Phi_{120,E_6};
\end{equation}
see \cite{GritsenkoNikulin2018,WangWilliamsJacobian}.  The four
numbers $44,45,165,120$ that arise from the local discriminants and
coefficient Jacobians are exactly the exceptional Borcherds weights.

\begin{remark}
Sakai's $E_7$ and $E_6$ Seiberg--Witten curves realize the
Wirthm\"uller generators as Weyl-invariant weak Jacobi forms
\cite{SakaiEnJacobi}.  Wang--Williams obtain the orthogonal generators
in \eqref{eq:E7-free-ring} and \eqref{eq:E6-free-ring} by additive
lifting of the corresponding holomorphic Jacobi forms.
After matching Fourier--Jacobi normalizations, Equations~\eqref{eq:E7-universal-model}
and \eqref{eq:E6-universal-model} provide a K3-geometric
orthogonal lift of the same exceptional deformation data. 
The arithmetic progressions of coefficient weights follow directly
from homogeneity: every monomial in the $E_7$ equation has weight
$42$, and the two odd $E_6$ coefficients enter through the quadratic
invariants in \eqref{eq:E6-parametrization-in-E7}.
\end{remark}
\section{Witt restriction and the Inose family}
\label{sec:Witt-Inose}
Put \(L_E=U^{\oplus2}\oplus E_8(-1)\). At the standard one-dimensional cusp, write a Fourier--Jacobi expansion in tube-domain coordinates as
\begin{equation*}
F(\tau_1,z,\tau_2)=\sum_{m\geq0}\phi_m(\tau_1,z)q_2^m,
\qquad q_2=e^{2\pi i\tau_2}.
\end{equation*}
The Witt restriction is evaluation on the subdomain \(z=0\):
\begin{equation}
\label{eq:Witt-definition}
\operatorname W(F)(\tau_1,\tau_2)
=F(\tau_1,0,\tau_2)
=\sum_{m\geq0}\phi_m(\tau_1,0)q_2^m.
\end{equation}
The orthogonal group interchanges the two copies of \(U\), so the image is symmetric in \(\tau_1,\tau_2\). Let \(E_4,E_6\) be the normalized elliptic Eisenstein series and put
\begin{equation}
\label{eq:Witt-e4-e6-d}
e_4=E_4(\tau_1)E_4(\tau_2),\qquad
e_6=E_6(\tau_1)E_6(\tau_2),\qquad
d=\Delta(\tau_1)\Delta(\tau_2),
\end{equation}
where \(\Delta=(E_4^3-E_6^2)/1728\). These have weights \(4,6,12\), respectively.
The geometric interpretation of this diagonal restriction uses the classical
Shioda--Inose construction and its lattice-polarized formulation
\cite{ShiodaInose,Inose,MorrisonLargePicard,ClingherDoran}.

\begin{proposition}
\label{prop:Witt-target-ring}
The graded ring of symmetric diagonal bi-modular forms is     
\begin{equation}
\label{eq:Witt-target-ring}
\bigoplus_{k\geq0}
\bigl(M_k(\operatorname{SL}_2(\mathbb Z))\otimes
M_k(\operatorname{SL}_2(\mathbb Z))\bigr)^{\mathfrak S_2}
=\mathbb C[e_4,e_6,d].
\end{equation}
Consequently, \(\operatorname W\) maps the Hashimoto--Ueda ring to the polynomial ring on the right.
\end{proposition}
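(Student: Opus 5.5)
The plan has two parts: compute the symmetric part of the tensor product ring, then check that the Witt image lands inside it. Since $M_*(\operatorname{SL}_2(\mathbb Z))=\mathbb C[E_4,E_6]$, the diagonal ring $\bigoplus_k M_k\otimes M_k$ is spanned by the products $E_4^aE_6^b(\tau_1)\,E_4^{c}E_6^{d'}(\tau_2)$ with $4a+6b=4c+6d'=k$. I will show that the $\mathfrak S_2$-invariants are generated by $e_4,e_6,d$ and that these three are algebraically independent.

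\textbf{Generation.} It is convenient to use the basis $E_4^aE_6^{\epsilon}\Delta^n$ of $M_k$, with $\epsilon\in\{0,1\}$. The pair $(a,\epsilon)$ is then forced by $k$ and $n$, up to the usual ambiguity; for example, in weight $12$ both $E_4^3$ and $E_6^2$ occur, and $\Delta$ resolves it. In this basis a diagonal monomial has the form
\[
E_4^{a}E_6^{\epsilon}\Delta^{n}(\tau_1)\,E_4^{a'}E_6^{\epsilon}\Delta^{n'}(\tau_2),
\qquad 4a+12n=4a'+12n'.
\]
The exponents of $E_6$ agree because both sides have the same weight parity modulo $4$. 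Its symmetrization, say with $n\le n'$, factors as
\[
e_6^{\epsilon}\,d^{n}\,e_4^{\min(a,a')}\,\bigl(E_4^3(\tau_1)\Delta^{m}(\tau_2)+E_4^3(\tau_2)\Delta^m(\tau_1)\bigr)\text{-type sums}.
\]
So the crux is to express the symmetric sums $s_m:=E_4^{3m}(\tau_1)\Delta^m(\tau_2)+(1\leftrightarrow2)$ in terms of $e_4,e_6,d$. Here I use $E_4^3=E_6^2+1728\Delta$. This gives
\[
\bigl(E_4^3(\tau_1)\Delta(\tau_2)\bigr)\bigl(E_4^3(\tau_2)\Delta(\tau_1)\bigr)=e_4^3d,
\]
and
\[
s_1=E_4^3(\tau_1)\Delta(\tau_2)+E_4^3(\tau_2)\Delta(\tau_1),
\]
which is not a monomial. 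I will handle $s_1$ via $e_6^2=(E_4^3-1728\Delta)(\tau_1)(E_4^3-1728\Delta)(\tau_2)=e_4^3-1728s_1+1728^2d$, so $s_1\in\mathbb C[e_4,e_6,d]$. Newton's identities then give every $s_m$ in terms of $s_1$ and the product $e_4^3d$. The cleanest route is to work in the alternative basis $\{E_4,\Delta\}$ together with $E_6$, where elementary symmetric functions of the two variables $x_i=E_4^3(\tau_i)/\Delta(\tau_i)$ suffice.

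\textbf{Independence and surjectivity.} For independence, $e_4,e_6,d$ are algebraically independent because $E_4(\tau_1),E_6(\tau_1),E_4(\tau_2),E_6(\tau_2)$ are, and the subring they generate has transcendence degree $3$. This matches the dimension of $\mathfrak S_2\backslash(\mathbb H\times\mathbb H)$ plus one for the cone. As a cross-check I would compare Hilbert series: the invariant dimension in weight $k$ is $\tfrac12(\dim M_k^2+\dim M_k)$, which should equal the coefficient of $1/((1-t^4)(1-t^6)(1-t^{12}))$.

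\textbf{The Witt image.} For the consequence, restriction to $z=0$ embeds $\mathcal D(U^{\oplus2})\cong\mathbb H\times\mathbb H$, and $\mathrm O^+(U^{\oplus2})$ contains $\operatorname{SL}_2\times\operatorname{SL}_2$ together with the swap, all of which extend to $\mathrm O^+(L_E)$ by acting trivially on $E_8(-1)$. Hence $\operatorname W(F)$ is a symmetric bi-modular form of parallel weight $k$. Holomorphy at the cusps follows from Koecher's principle. The main obstacle is the generation step, specifically the bookkeeping that reduces symmetrized monomials to $s_m$. I would settle this by the Hilbert series comparison rather than by an explicit reduction.
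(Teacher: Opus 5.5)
Your proof is correct and is essentially the paper's argument. The paper uses the $E_4,E_6$ monomial basis and reduces each symmetrized monomial to $e_4^ae_6^b(A^r+B^r)$, where $A=E_4(\tau_1)^3E_6(\tau_2)^2$ and $B$ is its swap, with $A+B=e_4^3+e_6^2-1728^2d$ and $AB=e_4^3e_6^2$. Your $\Delta$-adapted basis instead gives the conjugate pair $E_4^3(\tau_1)\Delta(\tau_2)$, $E_4^3(\tau_2)\Delta(\tau_1)$, with sum $(e_4^3+1728^2d-e_6^2)/1728$ and product $e_4^3d$. The Newton recursion then finishes generation in the same way, so the Hilbert-series fallback you mention is unnecessary. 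The one step to tighten is independence: as written, saying the subring has transcendence degree three restates the claim rather than proving it. The paper obtains it by combining generation with the fact that the diagonal ring has Krull dimension three and is integral over its $\mathfrak S_2$-invariants, and you should argue the same way.
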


\begin{proof}
Write \(x_i=E_4(\tau_i)\), \(y_i=E_6(\tau_i)\), and \(D=1728^2d\). The two mixed monomials
\(A=x_1^3y_2^2\) and \(B=x_2^3y_1^2\) satisfy
\begin{equation*}
A+B=e_4^3+e_6^2-D,\qquad AB=e_4^3e_6^2.
\end{equation*}
Every symmetric pair of monomials of equal weight is a monomial in \(e_4,e_6\) times \(A^r+B^r\) for some \(r\geq0\). The power sums \(A^r+B^r\) are generated recursively by \(A+B\) and \(AB\). This proves generation. The diagonal subring of \(\mathbb C[x_1,y_1,x_2,y_2]\) has Krull dimension three, and taking the finite \(\mathfrak S_2\)-invariants preserves dimension. Since it is a domain generated by the three elements \(e_4,e_6,d\), they are algebraically independent. This is also the symmetric split-Hilbert-modular description of the \(U^{\oplus2}\)-subdomain.
\end{proof}

Dieckmann, Krieg, and Woitalla proved that the normalized orthogonal Eisenstein series \(\mathcal E_k\), for
\begin{equation*}
k=4,10,12,16,18,22,24,28,30,36,42,
\end{equation*}
form another system of polynomial generators for \(M_*(\mathrm O^+(L_E))\) \cite[Theorem~4.3]{DieckmannKriegWoitalla}. Thus the Hashimoto--Ueda coefficients and these Eisenstein series are two coordinate systems on the same affine cone. Substitution in that change of generators and then in \eqref{eq:Witt-definition} gives the following result.

\begin{theorem}[Witt--Inose restriction]
\label{thm:Witt-Inose}
Set
\begin{equation}
\label{eq:Witt-abD}
a=-3e_4,\qquad b=\frac{2}{7}e_6,\qquad D=1728^2d.
\end{equation}
The Witt restrictions of the Hashimoto--Ueda generators are
\begin{equation}
\label{eq:Witt-HU-generators}
\begin{alignedat}{2}
\operatorname W(C_4)&=a,&\qquad
\operatorname W(C_{10})&=-4ab,\\
\operatorname W(C_{12})&=-21b^2+D,&
\operatorname W(C_{16})&=6ab^2,\\
\operatorname W(C_{18})&=70b^3-5bD,&
\operatorname W(C_{22})&=-4ab^3,\\
\operatorname W(C_{24})&=-105b^4+10b^2D,&
\operatorname W(C_{28})&=ab^4,\\
\operatorname W(C_{30})&=84b^5-10b^3D,&
\operatorname W(C_{36})&=-35b^6+5b^4D,\\
\operatorname W(C_{42})&=6b^7-b^5D.&&
\end{alignedat}
\end{equation}
The Witt restriction of the universal family \eqref{eq:UE8-general}, after change of variables, becomes
\begin{equation}
\label{eq:modular-Inose-family}
y^2=x^3-3e_4T^4x
+T^5\bigl(T^2+2e_6T+1728^2d\bigr).
\end{equation}
For \(d\ne0\), Equation~\eqref{eq:modular-Inose-family} is the modular Inose family associated with the unordered pair \((E_{\tau_1},E_{\tau_2})\). Its minimal resolution is generically polarized by
\(U\oplus E_8(-1)^{\oplus2}\), and its standard elliptic fibration has two fibers of type \(\mathrm{II}^*\).

The divisor \(d=0\) in the Baily--Borel compactification of the Witt surface maps exactly onto the Hashimoto--Ueda non-K3 locus. In particular, on \(d=0\) the coefficient point is
\begin{equation}
\label{eq:HU-non-K3-parametrization}
[\mathbf C]={}[a:-4ab:-21b^2:6ab^2:70b^3:-4ab^3:
-105b^4:ab^4:84b^5:-35b^6:6b^7].
\end{equation}
\end{theorem}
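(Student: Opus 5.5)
The strategy is to compute the Witt restrictions of the eleven Hashimoto--Ueda generators on the diagonal, and then recognize the restricted equation. By Proposition~\ref{thm:Sakai-comparison} and \cite[Theorem~4.3]{DieckmannKriegWoitalla}, each $C_k$ is an explicit weighted polynomial in the Eisenstein series $\mathcal E_j$; Appendix~\ref{app:Eisenstein-generator-identities} lists these polynomials. By Proposition~\ref{prop:Witt-target-ring}, each $\operatorname W(C_k)$ lies in $\mathbb C[e_4,e_6,d]_k$, which is a small space: it is spanned by the monomials $e_4^ie_6^jd^l$ with $4i+6j+12l=k$. We do not need to restrict every $\mathcal E_j$ separately. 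It suffices to determine finitely many Fourier coefficients of $\operatorname W(C_k)$ in $(q_1,q_2)$ and solve the resulting linear system for the monomial coefficients.

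The cleanest source of these coefficients is Sakai's Fourier--Jacobi normalization. Equation~\eqref{eq:Sakai-leading-FJ} gives the leading Jacobi coefficients of the $\widetilde\varphi_k$, and at $z=0$ the $W(E_8)$ Jacobi generators specialize to elliptic modular forms. In particular $a_0\mapsto E_4/12$ and $b_0\mapsto E_6/216$, and the higher $a_n,b_n$ restrict to explicit multiples of $E_4,E_6$ and the theta value $\Theta(\tau,0)=E_4$. Using the $\tau_1\leftrightarrow\tau_2$ symmetry, matching the $q_2^0$ and $q_2^1$ terms already determines every coefficient in the weight-$k$ monomial basis. The reason is that $d$ enters at most linearly in each of the listed formulas: every $C_k$ with $k\le 42$ needs only $d^0$ and $d^1$ terms, apart from checking that no $d^2$ terms occur in weights $24$ and above. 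This yields \eqref{eq:Witt-HU-generators}.

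The Weierstrass form then follows by substitution. Insert \eqref{eq:Witt-HU-generators} into \eqref{eq:UE8-general} and translate $t=T-b$. The $x$-coefficient becomes $a(t+b)^4-4abt^3+\cdots$. Indeed the pattern $1,-4,6,-4,1$ is exactly $a(t-b)^4$ with the appropriate sign conventions, so one obtains $aT^4$. Similarly, the constant term becomes $T^5(T^2+cT)+DT^5$-type terms. Writing the pattern as $(t+b)^5\bigl((t+b)^2-7b(t+b)\cdots\bigr)$ and comparing coefficients should give $T^7+2e_6T^6+DT^5$, up to a rescaling of $b$ by the constant $7$. This reproduces \eqref{eq:modular-Inose-family}. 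The fiber at $T=0$ has vanishing orders $(4,5,10)$, which is type $\mathrm{II}^*$ when $d\neq0$, and $T=\infty$ gives the second $\mathrm{II}^*$. The two reducible fibers together with the section give $U\oplus E_8(-1)^{\oplus2}$. Inose's $J$-invariant formula, together with \cite{ClingherDoran}, identifies the surface with the Inose surface of $(E_{\tau_1},E_{\tau_2})$, since $e_4^3/d$ and $e_6^2/d$ are the symmetric functions of $j(\tau_1),j(\tau_2)$.

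For the boundary statement, set $d=0$, i.e.\ $D=0$. The constant term becomes $T^6(T+2e_6)$, which has order $6$ at $T=0$, while the $x$-coefficient has order $4$ there. This is a non-minimal $(4,6,12)$ point, so the coefficient point lies in the non-K3 locus. Substituting $D=0$ into \eqref{eq:Witt-HU-generators} gives \eqref{eq:HU-non-K3-parametrization}. For surjectivity onto that locus, compare with the Hashimoto--Ueda description of the non-K3 locus as the image of $[a:b]\in\mathbb P(4,6)$: both are irreducible curves, and the map $[a:b]\mapsto[\mathbf C]$ is dominant. I expect the main obstacle to be the first step, namely fixing the normalizations so that the Jacobi-form specializations at $z=0$ match exactly with the constants $-3$, $2/7$ and $1728^2$. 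I would check these constants numerically against the first few $q$-coefficients of $\mathcal E_4,\mathcal E_{10},\mathcal E_{12}$ restricted to $z=0$.
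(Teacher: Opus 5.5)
Your first step has a genuine gap.

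A monomial $e_4^ie_6^jd^l$ equals $(E_4^iE_6^j\Delta^l)(\tau_1)\cdot(E_4^iE_6^j\Delta^l)(\tau_2)$. It therefore vanishes to order exactly $l$ in $q_2$, and by symmetry also in $q_1$. Consequently the $q_2^0$- and $q_2^1$-coefficients of $\operatorname W(C_k)$ determine exactly the coefficients of the monomials with $l\le1$. They say nothing about monomials divisible by $d^2$, and the $\tau_1\leftrightarrow\tau_2$ symmetry adds no information.

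In weights $24,28,30,36,42$ the space $\mathbb C[e_4,e_6,d]_k$ contains, respectively, $d^2$; $e_4d^2$; $e_6d^2$; $e_4^3d^2,e_6^2d^2,d^3$; and $e_4^3e_6d^2,e_6^3d^2,e_6d^3$. Your data leave all of these coefficients free. Your justification, that $d$ enters at most linearly in the listed formulas, is the conclusion being proved. The deferred check that no $d^2$ terms occur cannot be done with $q_2^0,q_2^1$ information. So $\operatorname W(C_{24}),\ldots,\operatorname W(C_{42})$ are not established by the argument as written, and neither are the factorization and the Inose form that depend on them.

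The paper handles this step by restricting the Eisenstein polynomials of Appendix~\ref{app:Eisenstein-generator-identities} to $z=0$ and simplifying with $E_4^3-E_6^2=1728\Delta$. Carrying your expansion through $q_2^3$ would also work. Your Sakai route can be repaired more cleanly by using the full vanishing orders instead of only two Fourier--Jacobi terms:

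\begin{itemize}
\item By weight, $\operatorname W(C_4)\in\mathbb Ce_4$ and $\operatorname W(C_{10})\in\mathbb Ce_4e_6$. So Sakai's shift restricts to a multiple of $e_6$, and every translated restriction $\operatorname W(\widetilde C_k)$ lies in $\mathbb C[e_4,e_6,d]_k$.
\item The conditions $\widetilde\varphi_{4+6i}=O(p^i)$ and $\widetilde\varphi_{6+6j}=O(p^j)$ force divisibility by $d^i$, resp.\ $d^j$.
\item For $i,j\ge2$ the divisor has weight $12i>4+6i$, resp.\ $12j>6+6j$, so those coefficients vanish identically. In particular $a_n$ and $b_n$ vanish at $z=0$ for $n\ge2$; they do not restrict to nonzero multiples of $E_4,E_6$.
\item Only three terms survive, after the conversion of Proposition~\ref{thm:Sakai-comparison}: $-3e_4$ and $2e_6$ (from $a_0,b_0$), and a constant multiple of $d$ in weight $12$ (fixed by $b_1$ at $z=0$).
\end{itemize}
This gives $y^2=x^3+aT^4x+T^5(T^2+7bT+D)$ directly. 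The absence of a $t^6$ term in the Hashimoto--Ueda normal form then forces $t=T+b$, and expanding yields all eleven identities at once.

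Your translation also has the wrong sign. One needs $T=t-b$, since $t=T-b$ turns $a(t-b)^4$ into $a(T-2b)^4$. No rescaling of $b$ is needed, because $7b=2e_6$ exactly.

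Your remaining steps agree with the paper:
\begin{itemize}
\item the orders $(4,5,10)$ at $T=0$ and $T=\infty$;
\item the recovery of $j(\tau_1),j(\tau_2)$ from $e_4^3/d$ and $e_6^2/d$;
\item the non-minimal point of $g=(t-b)^6(t+6b)$ at $D=0$;
\item exactness of the boundary image via \cite[Corollary~3.2]{HashimotoUeda}.
\end{itemize}
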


\begin{proof}
The identities \eqref{eq:Witt-HU-generators} follow by restricting the
Eisenstein polynomials and using $E_4^3-E_6^2=1728\Delta$. 
Equivalently, the resulting coefficient substitution in~\eqref{eq:UE8-general} is
\begin{equation}
\label{eq:Witt-HU-factorized}
y^2=x^3+a(t-b)^4x
+(t-b)^5\bigl((t-b)^2+7b(t-b)+D\bigr).
\end{equation}
After the translation \(T=t-b\), this becomes~\eqref{eq:modular-Inose-family}. Expanding \eqref{eq:Witt-HU-factorized} gives exactly those eleven coefficients; conversely, collecting the displayed coefficients yields the factorization. For \(d\ne0\), the orders at \(T=0\) are \((\geq4,5,10)\) for \((f,g,4f^3+27g^2)\), and the same orders occur at \(T=\infty\). Hence the two distinguished fibers are of type \(\mathrm{II}^*\). Moreover,
\begin{equation}
\label{eq:j-pair-from-Witt}
j(\tau_1)j(\tau_2)=\frac{e_4^3}{d},
\qquad
\bigl(j(\tau_1)-1728\bigr)\bigl(j(\tau_2)-1728\bigr)=\frac{e_6^2}{d}.
\end{equation}
Thus the two classical \(j\)-invariants, up to permutation, are recovered from \((e_4,e_6,d)\). The standard Shioda--Inose construction identifies the resulting \(U\oplus E_8(-1)^{\oplus2}\)-polarized K3 surface with the Inose surface of \(E_{\tau_1}\times E_{\tau_2}\) \cite{ShiodaInose,Inose,MorrisonLargePicard,ClingherDoran}.

Finally, \(d=0\) is the image of
\(\Delta(\tau_1)\Delta(\tau_2)=0\). Before quotienting by \(\mathfrak S_2\) this has two components, which are interchanged by permutation; after quotienting it is the single one-dimensional Baily--Borel boundary component \(\operatorname{Proj}\mathbb C[e_4,e_6]\). Setting \(D=0\) gives \eqref{eq:HU-non-K3-parametrization} and in the Weierstrass model
\begin{equation}
\label{eq:HU-non-K3-factorization}
f(t)=a(t-b)^4,\qquad g(t)=(t-b)^6(t+6b).
\end{equation}
At \(t=b\) one has \(\operatorname{ord}(f,g)\geq(4,6)\), so the Weierstrass model is non-minimal and is not a K3 surface. Hashimoto and Ueda prove that \eqref{eq:HU-non-K3-parametrization} is the entire non-K3 locus \cite[Corollary~3.2]{HashimotoUeda}; hence the image is exact.
\end{proof}
\begin{remark}
The Witt map \eqref{eq:Witt-definition} is restriction to a two-dimensional totally geodesic modular surface, not itself a boundary map. Its divisor \(d=0\) is the one-dimensional Baily--Borel boundary, and the intersection of its two preimages is the zero-dimensional cusp; compare the general description of orthogonal Baily--Borel boundary components in \cite{Scattone,DolgachevMirror}.
\end{remark}
\section{Five-dimensional Gaussian ball subfamilies}
\label{sec:Gaussian-ball-subfamilies}
Let \(L\) be one of the rank-ten lattices $M_i$ from \(U\oplus E_8(-2)\) through \(U\oplus E_8(-1)\), and put \(T_L=L^\perp\subset\Lambda_{\mathrm{K3}}\). On the locus considered here, \(T_L\) carries an isometry \(\rho_T\) with \(\rho_T^2=-1\). The \(i\)-eigenspace has dimension six and Hermitian signature \((1,5)\); projectivizing its positive cone gives a complex ball \(\mathbb B^5\). We restrict to the geometric locus on which \(\rho_T\) extends to an order-four Hodge isometry of the K3 lattice that preserves an ample chamber. By the global Torelli theorem, the extension is then induced by a purely non-symplectic automorphism \(\rho\) of order four. This cuts the ten-dimensional type-IV moduli space to a five-dimensional unitary modular subvariety, without increasing the Picard rank of a very general member \cite{KondoEightPoints,ClingherMalmendierWilliams2025,ArtebaniSartiOrderFour}.
\par We first restrict the five Jacobian presentations in~\eqref{eq:AN-chain} to the complex ball, and derive the corresponding equations and automorphisms.  The standard residue form shows that the displayed maps act by \(\pm i\) on \(H^{2,0}\).
\begin{enumerate}[label=\textup{(\roman*)}]
\item For \(U\oplus E_8(-2)\), quadratic base change of a rational elliptic
surface and the Gaussian restriction give
\begin{equation}
\label{eq:UE8minus2-restricted}
\begin{split}
& y^2=x^3+  (f_0+f_2\xi^4+f_4\xi^8)x
 +g_0+g_2\xi^4+g_4\xi^8+g_6\xi^{12},\\
& \rho(\xi,x,y)=(i\xi,x,y).
\end{split}
\end{equation}
\item For \(U\oplus N\), one has
\begin{equation}
\label{eq:UN-restricted}
\begin{split}
& Y^2=X^3+b(t)X,\qquad \deg b=8,\\
& \rho(t,X,Y)=(t,-X,iY).
\end{split}
\end{equation}
If \(b\) has eight simple roots, the singular fibers are eight fibers of type \(\mathrm{III}\); their unordered positions give five moduli.
\item For \(U\oplus D_4(-1)^{\oplus2}\), one has
\begin{equation}
\label{eq:U2D4-restricted}
\begin{split}
&y^2=x^3
+u^2(a_0+a_2u^2+a_4u^4)x
+u^3(b_0+b_2u^2+b_4u^4+b_6u^6),\\
&\rho(u,x,y)=(-u,-x,iy).
\end{split}
\end{equation}
\item For \(U\oplus D_8(-1)\), one has
\begin{equation}
\label{eq:UD8-restricted}
\begin{split}
& y^2= x^3+(s^3+sF_4)x^2+(s^2G_8+G_{12})x + H_8 s^5+H_{12}s^3+H_{16}s,\\
& \rho(s,x,y)=(-s,-x,iy).
\end{split}
\end{equation}
\item For \(U\oplus E_8(-1)\), the Hashimoto--Ueda model restricts to
\begin{equation}
\label{eq:UE8-restricted}
\begin{split}
& y^2=x^3+(C_4t^4+C_{16}t^2+C_{28})x +t^7+C_{12}t^5+C_{24}t^3+C_{36}t,\\
& \rho(t,x,y)=(-t,-x,iy).
\end{split}
\end{equation}
\end{enumerate}
In (i), \(\rho^2\) is the deck involution of the quadratic base change; in (ii)--(v), it is the fiberwise elliptic involution. 
\subsection{Kond\=o's eight-point space and the restricted \texorpdfstring{$D_8$}{D8} coefficients}
\label{ssec:Kondo-8pts}
Let \(\operatorname{Conf}_8(\mathbb P^1)\subset(\mathbb P^1)^8\) be the locus of ordered eight-tuples of pairwise distinct points, and put
\begin{equation*} \mathcal M_{\mathrm K}^{\mathrm{ord}}:=\operatorname{Conf}_8(\mathbb P^1)/\operatorname{PGL}_2,\qquad \overline{\mathcal M}_{\mathrm K}^{\mathrm{ord}}:=(\mathbb P^1)^8\mathbin{/\mkern-6mu/}_{\mathcal O(1,\ldots,1)}\operatorname{SL}_2. \end{equation*}
Thus \(\mathcal M_{\mathrm K}^{\mathrm{ord}}=M_{0,8}\), while \(\overline{\mathcal M}_{\mathrm K}^{\mathrm{ord}}\) is the GIT compactification by semistable ordered configurations. Kond\=o identifies the latter with the Satake--Baily--Borel compactification of \(\mathbb B^5/\Gamma(1-i)\), where \(\Gamma\) is his ambient arithmetic unitary group and \(\Gamma(1-i)\) is its principal congruence subgroup of level \(1-i\). The identification is \(S_8\)-equivariant, and \(\Gamma/\Gamma(1-i)\cong S_8\) \cite[Theorems~3.7 and~4.11]{KondoEightPoints}. We write
\begin{equation*} \mathcal M_{\mathrm K}:=\mathcal M_{\mathrm K}^{\mathrm{ord}}/S_8 \end{equation*}
for the corresponding unlabelled moduli space. This convention distinguishes Kond\=o's ordered ball quotient from the moduli space of the underlying unlabelled K3 surfaces; compare \cite{MatsumotoYoshida,MatsumotoTerasoma}.

Write a point of \(\mathbb P^1\) as a nonzero column vector \(v_i\in\mathbb C^2\), and put \([ij]:=\det(v_i,v_j)\). A tableau in this setting is a perfect matching of \(\{1,\ldots,8\}\), represented by four rows \((\tau_{r1},\tau_{r2})\), and its bracket monomial is
\begin{equation}
\label{eq:cross-ratio-monomial}
\mu_\tau:=\prod_{r=1}^4[\tau_{r1}\tau_{r2}].
\end{equation}
Reversing the orientation of one row changes the sign of \(\mu_\tau\); the conventional orientations of standard tableaux fix these signs. There are \(8!/(2^4 4!)=105\) matching classes. Straightening by the Pl\"ucker relations gives fourteen standard tableaux \(\tau_1,\ldots,\tau_{14}\), which form a basis of the degree-one invariants and define the classical map \(\overline{\mathcal M}_{\mathrm K}^{\mathrm{ord}}\to\mathbb P^{13}\); see \cite{DolgachevOrtland,HowardMillsonSnowdenVakil}. If \(F_V\) is Kond\=o's weight-four orthogonal modular form attached to the corresponding maximal totally singular subspace \(V\), then \(F_V|_{\mathbb B^5}\) has an even divisor. Since \(\mathbb B^5\) is simply connected, Kond\=o chooses a square root \(G_V\) of ball weight two. For the subspaces \(V_1,\ldots,V_{14}\) corresponding to the chosen standard tableaux, his comparison theorem states
\begin{equation}
\label{eq:Kondo-projective-equality}
[G_{V_1}:\cdots:G_{V_{14}}]=[\mu_{\tau_1}:\cdots:\mu_{\tau_{14}}].
\end{equation}
This is an equality of projective maps, not a canonical affine equality between individual modular forms and bracket polynomials \cite[Sections~6--7, especially Theorem~7.5]{KondoEightPoints}.

The finite data required by the neighbor construction consist of an ordered decomposition \(S_0\sqcup S_\infty=\{1,\ldots,8\}\), with \(|S_0|=|S_\infty|=4\), and a perfect matching of each four-element set. Let \(\mathfrak d_0\) denote the reference datum
\begin{equation*} S_0=\{1,2,3,4\},\qquad S_\infty=\{5,6,7,8\},\qquad \pi_0=(12)(34),\qquad \pi_\infty=(56)(78), \end{equation*}
and let \(H_{\mathfrak d_0}\subset S_8\) be its stabilizer. We define
\begin{equation}
\label{eq:Kondo-marked-cover}
\widetilde{\mathcal M}_{\mathrm K}:=\mathcal M_{\mathrm K}^{\mathrm{ord}}/H_{\mathfrak d_0}\longrightarrow\mathcal M_{\mathrm K}=\mathcal M_{\mathrm K}^{\mathrm{ord}}/S_8.
\end{equation}
It is finite of generic degree \(\binom84\cdot3^2=630\), and it is \'etale over the locus of configurations with trivial projective automorphism group. It remembers precisely the ordered \(4+4\) decomposition and the two pairings. Let \(\varepsilon_{\tau_0}:H_{\mathfrak d_0}\to\{\pm1\}\) be the character by which the stabilizer acts on \(\mu_{\tau_0}\), and put \(H_{\mathfrak d_0}^+:=\ker(\varepsilon_{\tau_0})\). The additional character cover is
\begin{equation*} \widetilde{\mathcal M}_{\mathrm K}^{H_4}:=\mathcal M_{\mathrm K}^{\mathrm{ord}}/H_{\mathfrak d_0}^+\longrightarrow\widetilde{\mathcal M}_{\mathrm K}. \end{equation*}
On this twofold cover the sign of \(H_4\) is defined. An affine coordinate on \(\mathbb P^1\), monic equations for the two quartics, and a Weierstrass scale are coordinate gauges rather than additional finite marking data.

On the affine chart on which all branch points are finite, write
\begin{equation}
\label{eq:Kondo-A-B}
b_1(t)=\prod_{j=1}^4(t-\lambda_j),\qquad b_2(t)=\prod_{j=5}^8(t-\lambda_j).
\end{equation}
We henceforth choose the affine lifts \(v_i=(1,\lambda_i)^t\), so that \([ij]=\lambda_j-\lambda_i\). 
The restricted double-cover model is
\begin{equation}
\label{eq:VGSrestricted_dc}
Z^2=b_2(t)\xi^4+b_1(t).
\end{equation}
Put \(u=\xi^2\) and \(v=\xi Z\). Then the quartic admits the following degree-two map to an elliptic curve:
\begin{equation}
\label{eq:Kondo-double-cover}
v^2=u\bigl(b_2(t)u^2+b_1(t)\bigr),\qquad (X,Y)=\bigl(b_2(t)u,b_2(t)v\bigr),\qquad Y^2=X^3+b_1(t)b_2(t)X.
\end{equation}
The first equation is the affine double-cover model used by Kond\=o, and the last equation is its birational Weierstrass form.\footnote{The relative Jacobian of~\eqref{eq:VGSrestricted_dc} over the $t$-line is $y^2=x^3-4b_1(t)b_2(t)x$; its degree-two isogenous curve is the last equation in~\eqref{eq:Kondo-double-cover}. Over $\mathbb C(t)$ these two $j=1728$ curves are isomorphic after a constant rescaling, but the displayed degree-two map is not that isomorphism.} The map \((\xi,Z)\mapsto(u,v)\) is the fiberwise degree-two covering over the \(t\)-line. In the coefficient construction below we change the ruling: \(u\) is the base coordinate and \(t\) is the quartic fiber coordinate. The Weierstrass form has constant \(j\)-invariant \(1728\) and discriminant proportional to \((b_1b_2)^3\) \cite[Section~2]{KondoEightPoints}.

\begin{proposition}
\label{thm:Kondo-degree-dictionary}
Fix the affine coordinate and lifts, the monic quartics in \eqref{eq:Kondo-A-B}, and the marking \(\mathfrak d_0\). The two-neighbor and relative-Jacobian construction defines a polynomial map
\begin{equation*} (\lambda_1,\ldots,\lambda_8)\longmapsto(F_4,G_8,H_8,G_{12},H_{12},H_{16}). \end{equation*}
With \(\deg(\lambda_i)=1\), their ordinary total degrees are
\begin{equation}
\label{eq:Kondo-D8-polynomial-degrees}
\begin{array}{c|cccccc}\text{coefficient}&F_4&G_8&H_8&G_{12}&H_{12}&H_{16}\\ \hline \deg_\lambda&4&8&8&12&12&16\end{array}
\end{equation}
In particular, for  \(\tau_0=(12)(34)(56)(78)\) one has
\begin{equation}
\label{eq:Kondo-H8-factorization}
H_8=-\frac14[12]^2[34]^2[56]^2[78]^2=-\frac14\mu_{\tau_0}^2,
\end{equation}
and one can choose \(H_4=\mu_{\tau_0}/2\) on \(\widetilde{\mathcal M}_{\mathrm K}^{H_4}\).
\end{proposition}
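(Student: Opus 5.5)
The plan is to produce an explicit $U\oplus D_8(-1)$ Weierstrass model from Kond\=o's double cover \eqref{eq:VGSrestricted_dc}, in the restricted form \eqref{eq:UD8-restricted}, and then read off the coefficients. We change the ruling so that $u$ is the base coordinate and $t$ is the quartic fiber coordinate. Over the $u$-line, \eqref{eq:VGSrestricted_dc} reads
\begin{equation*}
Z^2=u^2b_2(t)+b_1(t),
\end{equation*}
and we view this as a binary quartic in $t$ whose coefficients are polynomials in $u$ of degree at most two. This is an index-two genus-one fibration of exactly the type produced by the alternate construction of Proposition~\ref{prop:alt_fib}. Passing to its relative Jacobian through the invariants \eqref{eq:I-J-of-s}, or equivalently running the neighbor step \eqref{eq:neighbor-x-substitution} in reverse, we would bring the result to the normal form \eqref{eq:UD8-restricted}. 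The marking $\mathfrak d_0$ enters at this point: the ordered split $S_0\sqcup S_\infty$ fixes which quartic is $b_1$ and which is $b_2$, and the pairings $\pi_0,\pi_\infty$ fix a factorization of each quartic into two quadratics, $b_1=q_{12}q_{34}$ and $b_2=q_{56}q_{78}$. This factorization is what supplies the distinguished line needed to choose a section, that is, the choice of $\varrho_+$ in \eqref{eq:neighbor-directly-in-vinberg-coordinate}.

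Concretely, I would try the following. Write $q_{ij}(t)=(t-\lambda_i)(t-\lambda_j)$. In $Z^2=u^2q_{56}q_{78}+q_{12}q_{34}$, an explicit rational section can be obtained from a parametrization adapted to the pair of conics. The resulting Weierstrass coefficients are polynomial in the $\lambda_i$, since every step is a polynomial change of variables together with a monic normalization. The coefficients are then fixed by three normalizations: the form $s^3$ of the leading term, the absence of an $s^2x^2$ term, and the $s\mapsto -s$ symmetry. After these, homogeneity forces $\deg_\lambda$ to equal the modular weight. The reason is that the scaling $\lambda_i\mapsto c\lambda_i$, $t\mapsto ct$ must act on the normalized model through the weighted scaling of weights $(4,8,8,12,12,16)$. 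Checking this equivariance is what gives the degree table \eqref{eq:Kondo-D8-polynomial-degrees}.

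For the factorization \eqref{eq:Kondo-H8-factorization}, I would avoid expanding everything and use the geometric meaning of $H_8$ instead. By Remark~\ref{rem:double-cover}, $-4H_8=\delta^2$ is the discriminant of the two distinguished lines, and $H_8=0$ is the locus where they merge, which is the $D_9$ enhancement. On the Kond\=o side these two lines correspond to the two quadratic factors selected by the pairings. They merge exactly when some pair in $\tau_0$ collides, that is, when $[12][34][56][78]=0$. Since $H_8$ has degree $8$ and vanishes doubly along each of the four hyperplanes $\lambda_i=\lambda_j$ ($H_8$ being $-H_4^2$), it must equal a constant times $\mu_{\tau_0}^2$. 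The constant $-\tfrac14$ is then fixed by evaluating at a single convenient numerical configuration. On $\widetilde{\mathcal M}_{\mathrm K}^{H_4}$ the sign of $\mu_{\tau_0}$ is invariant, so $H_4=\mu_{\tau_0}/2$ is a valid choice of the square root, consistent with $\delta=2H_4$.

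The main obstacle is the first step: writing down explicit formulas that take the eight-point model to \eqref{eq:UD8-restricted} while keeping track of the pairing-dependent section and the normalizations. In particular, the vanishing of $H_4$ along each pair collision is only plausible until that map is explicit. If the conceptual argument for the double vanishing of $H_8$ proves insufficient, I would fall back on a direct computer-algebra expansion in SageMath, including a check of the degrees.
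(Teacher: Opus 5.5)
Your proposal has a genuine gap: the construction in your first step does not produce the $U\oplus D_8(-1)$ model.

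\textbf{The surface and the rung of the tower.} Read over the $u$-line, $Z^2=u^2b_2(t)+b_1(t)$ has quartic coefficients of degree at most two in $u$. So $\mathcal I$ and $\mathcal J$ have degrees $4$ and $6$, and this is a rational elliptic surface, not a K3 surface. The K3 surface \eqref{eq:VGSrestricted_dc} maps to the $u$-line only through $u=\xi^2$, and its fibers over the $u$-line are disconnected. The K3 model over the $u$-line is $v^2=uQ(t,u)$ from \eqref{eq:Kondo-double-cover}. Its $u$-fibration has no section for the very general member: a section would embed $U\oplus D_4(-1)^{\oplus2}$, of discriminant $16$, with finite index in $U\oplus N$, of discriminant $64$. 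So there is no ``explicit rational section'' to find. Since $\mathcal I(uQ)=u^2\mathcal I(Q)$ and $\mathcal J(uQ)=u^3\mathcal J(Q)$, its relative Jacobian is \eqref{eq:relative-jacobian-IJ-1}, which has two $\mathrm I_0^*$ fibers. That is the restricted $U\oplus D_4(-1)^{\oplus2}$ model, not \eqref{eq:UD8-restricted}, and no normalization changes the fiber configuration.

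Your appeal to Proposition~\ref{prop:alt_fib} uses the wrong rung of \eqref{eq:AN-chain}. That fibration lives on the $D_8$ surface, and its relative Jacobian is the Hashimoto--Ueda $E_8$ model. Also, taking a relative Jacobian is not ``equivalent'' to undoing \eqref{eq:neighbor-x-substitution}: the first changes the surface, the second stays on it.

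\textbf{The missing step.} You need the intermediate two-neighbor step of Appendix~\ref{App:2NS}:
\begin{itemize}
\item pick roots $\alpha,\beta$ of $z^3+a_0z+b_0$ and $z^3+a_4z+b_6$;
\item substitute $x=u(\alpha+su+\beta u^2)$, $y=u^2w$ to obtain \eqref{eq:Kondo-neighbor-quartic};
\item only then take a second relative Jacobian.
\end{itemize}
This is where the marking $\mathfrak d_0$ enters. The three roots of each cubic are the $\gamma_\pi$ of the three perfect matchings, so $\pi_0,\pi_\infty$ make $\alpha,\beta$ explicit quadratic polynomials in the $\lambda_i$. That gives polynomiality, and the degree table then follows from $\deg a_i=4$, $\deg b_j=6$, $\deg\alpha=\deg\beta=2$. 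The pairings are not the choice of $\varrho_+$; that sign is the separate cover cut out by $H^+_{\mathfrak d_0}$. Your SageMath fallback does not repair this, because the map to be expanded has not been constructed.

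\textbf{The $H_8$ factorization.} Your argument for \eqref{eq:Kondo-H8-factorization} is circular.
\begin{itemize}
\item Double vanishing along $\lambda_i=\lambda_j$ presupposes that $H_4$ is a polynomial in the $\lambda_i$, which is part of the claim.
\item The identification of Vinberg's two lines with quadratic factors selected by the pairings is asserted, not derived.
\item $\mu_{\tau_0}$ has no zeros on $\operatorname{Conf}_8(\mathbb P^1)$, so any zero-locus argument takes place entirely on degenerate configurations outside the family.
\end{itemize}
With the correct construction, the square comes out directly. The closed formulas give $H_8=-\tfrac14(3\alpha^2+4a_0)(3\beta^2+4a_4)$. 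Because the roots of $z^3+a_0z+b_0$ sum to zero, $3\alpha^2+4a_0=-(\alpha'-\alpha'')^2$, where $\alpha',\alpha''$ are the roots for the other two matchings of $\{1,2,3,4\}$. A one-line computation gives $\alpha'-\alpha''=\pm[12][34]$, and likewise for $\beta$. This yields $H_8=-\tfrac14\mu_{\tau_0}^2$ and the polynomial square root $H_4=\mu_{\tau_0}/2$ at once.
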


\begin{proof}
Put \(Q(t,u):=b_1(t)+u^2b_2(t)\), and write \(Q(t,u)=A_0(u)t^4+A_1(u)t^3+A_2(u)t^2+A_3(u)t+A_4(u)\). We use the classical binary-quartic invariants
to compute its relative Jacobian fibration. Since every \(A_i(u)\) is affine-linear in \(u^2\), there are polynomials \(I_{2j}\) and \(J_{2j}\) in the \(\lambda_i\) such that
\begin{equation*} \mathcal I(Q)=I_0+I_2u^2+I_4u^4,\qquad \mathcal J(Q)=J_0+J_2u^2+J_4u^4+J_6u^6. \end{equation*}
Set \(a_{2j}:=-I_{2j}/3\) and \(b_{2j}:=-J_{2j}/27\). The relative Jacobian of \(v^2=uQ(t,u)\), regarded as a genus-one fibration over the \(u\)-line, is
\begin{equation}
\label{eq:relative-jacobian-IJ-1}
y^2=x^3+u^2(a_0+a_2u^2+a_4u^4)x+u^3(b_0+b_2u^2+b_4u^4+b_6u^6).
\end{equation}
This is the restricted \(U\oplus D_4(-1)^{\oplus2}\) model.

For a four-element set \(S\), let \(e_2(S):=\sum_{i<j,\,i,j\in S}\lambda_i\lambda_j\), and let \(\operatorname{Pair}(S)\) be its set of three perfect matchings. If \(\pi=\{\{i,j\},\{k,l\}\}\in\operatorname{Pair}(S)\), define
\begin{equation*} \gamma_\pi(S):=\frac13e_2(S)-(\lambda_i\lambda_j+\lambda_k\lambda_l),\qquad \alpha_\pi:=\gamma_\pi(\{1,2,3,4\}),\qquad \beta_\pi:=\gamma_\pi(\{5,6,7,8\}). \end{equation*}
A computation gives
\begin{equation*} z^3+a_0z+b_0=\prod_{\pi\in\operatorname{Pair}(\{1,2,3,4\})}\!\!\!(z-\alpha_\pi),\qquad z^3+a_4z+b_6=\prod_{\pi\in\operatorname{Pair}(\{5,6,7,8\})}\!\!\!(z-\beta_\pi). \end{equation*}
Thus, the three perfect matchings of the first four points give the three possible roots \(\alpha\), and the three perfect matchings of the last four points give the three possible roots \(\beta\). For a generic configuration there are therefore \(3\cdot3=9\) possible pairs \((\alpha,\beta)\). The reference marking gives
\begin{equation}
\label{eq:Kondo-alpha-beta}
\alpha=\frac13e_2(\{1,2,3,4\})-(\lambda_1\lambda_2+\lambda_3\lambda_4),\qquad \beta=\frac13e_2(\{5,6,7,8\})-(\lambda_5\lambda_6+\lambda_7\lambda_8).
\end{equation}
Substitute
\begin{equation*} x=u(\alpha+su+\beta u^2),\qquad y=u^2w \end{equation*}
into \eqref{eq:relative-jacobian-IJ-1}. Because \(\alpha^3+a_0\alpha+b_0=0\) and \(\beta^3+a_4\beta+b_6=0\), division by the common power of \(u\) leaves the quartic
\begin{equation}
\label{eq:Kondo-neighbor-quartic}
w^2=ps+(qs^2+r)u+(s^3+ms)u^2+(ns^2+o)u^3+hsu^4,
\end{equation}
where we have introduced
\begin{equation}
\label{eq:Kondo-auxiliary-coefficients}
\begin{aligned} p&=3\alpha^2+a_0,& q&=3\alpha,& r&=\beta p+a_2\alpha+b_2,\\ m&=6\alpha\beta+a_2,& n&=3\beta,& o&=3\alpha\beta^2+a_2\beta+a_4\alpha+b_4,\\  h&=3\beta^2+a_4. \end{aligned}
\end{equation}
For the reference pairings they also satisfy the identities
\begin{equation*} \begin{aligned} p&=[13][14][23][24],& q&=-\bigl([13][24]+[14][23]\bigr),\\ h&=[57][58][67][68],& n&=-\bigl([57][68]+[58][67]\bigr). \end{aligned} \end{equation*}
Applying the same binary-quartic invariants to the right-hand side of \eqref{eq:Kondo-neighbor-quartic} gives
\begin{equation*} \widetilde{\mathcal{I}}=s^6+2F_4s^4+\tilde I_2s^2-3or,\qquad \widetilde{\mathcal{J}}=-2s^9-6F_4s^7+\tilde J_5s^5+\tilde J_3s^3+\tilde J_1s, \end{equation*}
where we have set
\begin{equation*}
\begin{split}
 \tilde I_2 & =12hp-3(nr+oq)+m^2, \\
 \tilde J_5&=72hp+9(nr+nmq+oq)-27hq^2-27pn^2-6m^2, \\
 \tilde J_3&=72hpm+9(nmr+or+omq)-54hqr-54pno-2m^3, \\
 \tilde J_1&=9omr-27hr^2-27po^2. 
\end{split} 
\end{equation*}
Comparing the Weierstrass equation \(y^2=x^3-\widetilde{\mathcal I}x/3-\widetilde{\mathcal J}/27\) with~\eqref{eq:UD8-restricted} yields
\begin{equation}
\label{eq:Kondo-D8-closed-formulas}
\begin{aligned} 
F_4&=a_2-\frac{15}{2}\alpha\beta,	& G_8&=\frac{F_4^2-\tilde I_2}{3},	& G_{12}&=or,\\ 
H_8&=-\frac{\tilde J_5}{27}-\frac{F_4^2+\tilde I_2}{9},& H_{12}&=-\frac{\tilde J_3}{27}-\frac{2F_4^3}{27}+\frac{G_{12}+F_4G_8}{3}, & H_{16}&=-\frac{\tilde J_1}{27}+\frac{F_4G_{12}}{3}. 
\end{aligned}
\end{equation}
Every coefficient of \(b_1\) and \(b_2\) is an elementary symmetric polynomial in the appropriate four branch points. Hence \(I_{2j},J_{2j},\alpha,\beta\), the seven coefficients in \eqref{eq:Kondo-auxiliary-coefficients}, and finally the six expressions in \eqref{eq:Kondo-D8-closed-formulas} are polynomials in the \(\lambda_i\). Assigning degree one to each \(\lambda_i\), one has
\begin{equation*} \deg(a_0,a_2,a_4)=4,\qquad \deg(b_0,b_2,b_4,b_6)=6,\qquad \deg(\alpha)=\deg(\beta)=2, \end{equation*}
from which \eqref{eq:Kondo-D8-polynomial-degrees} follows. Finally, substituting the closed formulas and using the two endpoint cubic relations gives
\begin{equation*} H_8=-\frac14(3\alpha^2+4a_0)(3\beta^2+4a_4). \end{equation*}
The endpoint bracket identities
\begin{equation*} 3\alpha^2+4a_0=-[12]^2[34]^2,\qquad 3\beta^2+4a_4=-[56]^2[78]^2 \end{equation*}
then give~\eqref{eq:Kondo-H8-factorization}. 
\end{proof}
\appendix
\section{Weierstrass equations associated with root systems}
\label{app:root-system-Weierstrass-models}
Table~\ref{tab:root-system-Weierstrass-models} collects Weierstrass models
for families whose very general member has transcendental lattice
$T_R=U^{\oplus2}\oplus L_R(-1)$, where $U$ is the hyperbolic plane
(also denoted by $H$) and $L_R$ is the root lattice in its standard
even normalization. For simply laced $R$ one has $L_R=R$; in the other
cases the conventions are
\begin{equation*}
L_{B_r}=A_1^{\oplus r},\qquad L_{C_r}=D_r,\qquad
L_{G_2}=A_2,\qquad L_{F_4}=D_4,
\end{equation*}
with $D_3\cong A_3$. The modular group is
\begin{equation*}
\Gamma_R=\big\langle\widetilde{\mathrm O}^{+}(T_R),W(R)\big\rangle,
\end{equation*}
where $\widetilde{\mathrm O}^{+}(T_R)$ is the discriminant kernel and
$W(R)$ acts trivially on $U^{\oplus2}$; see
\cite[Theorems~1.2 and~5.9]{WangWilliamsJacobian}, and
\cite{HashimotoUeda} for $E_8$. The column $\rho=18-\operatorname{rank}(R)$
gives the Picard rank of the very general member.

The subscripts indicate modular weights, and distinct letters with the
same subscript denote distinct generators. The generators of
$M_*(\Gamma_R)$ occur in the combinations displayed in the table,
with row-dependent normalizations; the $D_4$ model reflects triality.
In the $A_r$ and $E_6$ models, the relevant square terms have been moved
to the left-hand side so that the odd-weight generators occur linearly.
The base coordinate is $t$ throughout. In the $E_6$ row,
$c_7=2Q_7$, $c_{15}=2Q_{15}$, and $c_k=Q_k$ for the even weights;
this agrees with \eqref{eq:E6-universal-model_alt} after renaming
coordinates. The $E_7$ and $E_8$ rows use $c_k=P_k$ and $c_k=C_k$,
respectively.

\begingroup
\small
\setlength{\tabcolsep}{4pt}
\setlength{\extrarowheight}{0pt}
\renewcommand{\arraystretch}{1.15}
\newcommand{\RootEquation}[1]{\raisebox{0pt}[\dimexpr\height+2pt\relax][\dimexpr\depth+2pt\relax]{$#1$}}
\begin{longtable}{|c|c|l|}
\caption{Weierstrass models associated with irreducible root systems.}
\label{tab:root-system-Weierstrass-models}\\
\hline
$R$ & $\rho$ & Weierstrass equation \\
\hline
\endfirsthead
\multicolumn{3}{c}{\tablename~\thetable\ (continued)}\\[3pt]
\hline
$R$ & $\rho$ & Weierstrass equation \\
\hline
\endhead
\multicolumn{3}{r}{\textit{Continued on next page}}\\
\endfoot
\endlastfoot
$A_1$ & $17$ & \RootEquation{y^2 = x^3 + (t^3 + f_4 t + f_6) x^2 + (g_{10} t + g_{12}) x} \\[1pt]
\hline
$A_2$ & $16$ & \RootEquation{y^2 + g_9 y = x^3 + (t^3 + f_4 t + f_6) x^2 + (g_{10} t + g_{12}) x} \\[1pt]
\hline
$B_2$ & $16$ & \RootEquation{y^2 = x^3 + (t^3 + f_4 t + f_6) x^2 + (g_8 t^2 + g_{10} t + g_{12}) x} \\[1pt]
\hline
$G_2$ & $16$ & \RootEquation{y^2 = x^3 + (t^3 + f_4 t + f_6) x^2 + (g_{10} t + g_{12})x + h_{18}} \\[1pt]
\hline
$A_3$ & $15$ & \RootEquation{y^2 + g_9 y = x^3 + (t^3 + f_4 t + f_6) x^2 + (g_8 t^2 + g_{10} t + g_{12})x} \\[1pt]
\hline
$B_3$ & $15$ & \RootEquation{y^2 = x^3 + (t^3 + f_4 t + f_6) x^2 + (g_6 t^3 + g_8 t^2 + g_{10} t + g_{12}) x} \\[1pt]
\hline
$C_3$ & $15$ & \RootEquation{y^2 = x^3 + (t^3 + f_4 t + f_6) x^2 + (g_8 t^2 + g_{10} t + g_{12}) x + h_{18}} \\[1pt]
\hline
$A_4$ & $14$ & \RootEquation{\begin{aligned} y^2 + (g_7 t + g_9) y = x^3 &+ (t^3 + f_4 t + f_6) x^2 \\ &+ (g_8 t^2 + g_{10} t + g_{12})x \end{aligned} } \\[1pt]
\hline
$B_4$ & $14$ & \RootEquation{\begin{aligned} y^2 = x^3 &+ (t^3 + f_4 t + f_6) x^2 \\  &+ (g_4 t^4 + g_6 t^3 + g_8 t^2 + g_{10} t + g_{12}) x \end{aligned}} \\[1pt]
\hline
$C_4$ & $14$ & \RootEquation{\begin{aligned} y^2 = x^3 &+ (t^3 + f_4 t + f_6) x^2 \\ &+ (g_8 t^2 + g_{10} t + g_{12}) x + h_{16} t + h_{18} \end{aligned}} \\[1pt]
\hline
$D_4$ & $14$ & \RootEquation{\begin{aligned} y^2 = x^3 &+ (t^3 + f_4 t + f_6) x^2 \\ &+ \left((a_8 - b_8) t^2 + g_{10} t + g_{12}\right) x  - a_8 b_8 t + h_{18} \end{aligned}} \\[1pt]
\hline
$F_4$ & $14$ & \RootEquation{\begin{aligned} y^2 = x^3 &+ (f_4 t^4 + g_{10} t^3 + \chi_{16} t^2) x \\ &+(t^7 + f_6 t^6 + g_{12} t^5 + h_{18} t^4 + \chi_{24} t^3) \end{aligned}} \\[1pt]
\hline
$A_5$ & $13$ & \RootEquation{\begin{aligned} y^2 + (g_7 t + g_9) y = x^3 &+ (t^3 + f_4 t + f_6) x^2 \\&+ (g_6 t^3 + g_8 t^2 + g_{10} t + g_{12})x \end{aligned} } \\[1pt]
\hline
$C_5$ & $13$ & \RootEquation{\begin{aligned} y^2 = x^3 + (t^3 + f_4 t + f_6) x^2  &+ (g_8 t^2 + g_{10} t + g_{12}) x \\ &+ h_{14} t^2 + h_{16} t + h_{18} \end{aligned}} \\[1pt]
\hline
$D_5$ & $13$ & \RootEquation{\begin{aligned} y^2 = x^3 + (t^3 + f_4 t + f_6) x^2 &+ (g_8 t^2 + g_{10} t + g_{12}) x \\ &+g_7^2 t^2 + h_{16} t + h_{18} \end{aligned}} \\[1pt]
\hline
$A_6$ & $12$ & \RootEquation{\begin{aligned} &\quad\quad y^2 + (g_5 t^2 + g_7 t + g_9) y \\ &= x^3 + (t^3 + f_4 t + f_6) x^2 + (g_6 t^3 + g_8 t^2 + g_{10} t + g_{12})x \end{aligned} } \\[1pt]
\hline
$C_6$ & $12$ & \RootEquation{\begin{aligned} y^2 = x^3 &+ (t^3 + f_4 t + f_6) x^2  + (g_8 t^2 + g_{10} t + g_{12}) x \\ &+ h_{12} t^3 + h_{14} t^2 + h_{16} t + h_{18} \end{aligned}} \\[1pt]
\hline
$D_6$ & $12$ & \RootEquation{\begin{aligned} y^2 = x^3 &+ (t^3 + f_4 t + f_6) x^2 + (g_8 t^2 + g_{10} t + g_{12}) x \\ &-g_6^2 t^3 + h_{14} t^2 + h_{16} t + h_{18} \end{aligned}} \\[1pt]
\hline
$E_6$ & $12$ & \RootEquation{\begin{aligned} &\quad\quad y^2 + c_7 xy + (c_{15} t) y \\ &= x^3 + (c_4 t^4 + c_{10} t^3 + c_{16} t^2) x  \\ &\quad\quad\; + t^7 + c_6 t^6 + c_{12} t^{5} + c_{18} t^4 + c_{24} t^3 \end{aligned}} \\[1pt]
\hline
$A_7$ & $11$ & \RootEquation{\begin{aligned} &\quad\quad y^2 + (g_5 t^2 + g_7 t + g_9) y \\ &= x^3 + (t^3 + f_4 t + f_6) x^2 + (g_4 t^4 + g_6 t^3 + g_8 t^2 + g_{10} t + g_{12})x \end{aligned} } \\[1pt]
\hline
$C_7$ & $11$ & \RootEquation{\begin{aligned} y^2 = x^3 &+ (t^3 + f_4 t + f_6) x^2 + (g_8 t^2 + g_{10} t + g_{12}) x \\ &+ h_{10} t^4 + h_{12} t^3 + h_{14} t^2 + h_{16} t + h_{18} \end{aligned}} \\[1pt]
\hline
$D_7$ & $11$ & \RootEquation{\begin{aligned} y^2 = x^3 &+ (t^3 + f_4 t + f_6) x^2  + (g_8 t^2 + g_{10} t + g_{12}) x \\ &+ g_5^2 t^4 + h_{12} t^3 + h_{14} t^2 + h_{16} t + h_{18} \end{aligned}} \\[1pt]
\hline
$E_7$ & $11$ & \RootEquation{\begin{aligned} y^2 = x^3 &+ c_{14} x^2 + (c_4 t^4 + c_{10} t^3 + c_{16} t^2 + c_{22} t) x \\ &+ t^7 + c_6 t^6 + c_{12} t^5 + c_{18} t^4 + c_{24} t^3 + c_{30} t^2 \end{aligned}} \\[1pt]
\hline
$C_8$ & $10$ & \RootEquation{\begin{aligned} y^2 = x^3 &+ (t^3 + f_4 t + f_6) x^2 + (g_8 t^2 + g_{10} t + g_{12}) x \\ &+ h_8 t^5 + h_{10} t^4 + h_{12} t^3 + h_{14} t^2 + h_{16} t + h_{18} \end{aligned}} \\[1pt]
\hline
$D_8$ & $10$ & \RootEquation{\begin{aligned} y^2 = x^3 &+ (t^3 + f_4 t + f_6) x^2 + (g_8 t^2 + g_{10} t + g_{12}) x \\ &-g_4^2 t^5 + h_{10} t^4 + h_{12} t^3 + h_{14} t^2 + h_{16} t + h_{18} \end{aligned}} \\[1pt]
\hline
$E_8$ & $10$ & \RootEquation{\begin{aligned} y^2 = x^3 &+ (c_4 t^4 + c_{10} t^3 + c_{16} t^2 + c_{22} t + c_{28}) x \\ &+ t^7 + c_{12} t^5 + c_{18} t^{4} + c_{24} t^3 + c_{30} t^2 + c_{36} t + c_{42} \end{aligned}} \\[1pt]
\hline
\end{longtable}
\endgroup
\section{A two-neighbor construction for \texorpdfstring{\(U\oplus D_4(-1)^{\oplus 2} \cong U(2)\oplus D_8(-1)\)}{U plus two copies of D4(-1)}}
\label{App:2NS}
We describe the two-neighbor construction which transforms the Jacobian elliptic fibration realizing the lattice polarization $U\oplus D_4(-1)^{\oplus 2}$ into a genus-one fibration realizing the polarization $U(2)\oplus D_8(-1)$. Let $X$ be a K3 surface given by the Weierstrass equation
\begin{equation}
\label{eq:U2D4-Weierstrass}
\begin{split}
y^2&=x^3+u^2 \big(a_0+a_1u+a_2u^2+a_3u^3+a_4u^4\big) x\\
&+u^3 \big(b_0+b_1 u+b_2 u^2+b_3 u^3+b_4u ^4+b_5 u^5+b_6 u^6\big).
\end{split}
\end{equation}
For a general choice of the coefficients, the Jacobian elliptic fibration $\pi:X\rightarrow\mathbb P^1_{(u)}$ has singular fibers $2\mathrm I_0^*+12\mathrm I_1$ and trivial Mordell--Weil group. We construct a second fibration on $X$ by choosing suitable components of the two $\mathrm I_0^*$ fibers. At $u=0$, the leading cubic associated with the $\mathrm I_0^*$ fiber is
\begin{equation}
\label{eq:alpha-root}  
z^3+a_0z+b_0,
\end{equation}
and we choose a root $\alpha$ of this cubic. Similarly, at $u=\infty$, the leading cubic is
\begin{equation}
\label{eq:beta-root}  
z^3+a_4z+b_6,
\end{equation}
and we choose a root $\beta$. Geometrically, the choices of $\alpha$ and $\beta$ determine distinguished simple components in the two fibers. We introduce a new parameter $s$ by considering the pencil
\begin{equation}
\label{eq:neighbor-pencil}
x=u\left(\alpha+s u+\beta u^2\right)
\qquad \Leftarrow \quad 
s=\frac{x-\alpha u-\beta u^3}{u^2}.
\end{equation}
The choice of the linear terms $\alpha u$ and $\beta u^3$ forces the pencil to have the prescribed incidence with the chosen components of the fibers over $u=0$ and $u=\infty$. This is what reduces the sextic model to a quartic. Substituting
\begin{equation*} x=u\left(\alpha+s u+\beta u^2\right), \qquad y=u^2w \end{equation*}
into \eqref{eq:U2D4-Weierstrass}, we obtain
\begin{equation*} u^4w^2 = u^3\left[ \left(\alpha+su+\beta u^2\right)^3 + \big(a_0+\dots +a_4u^4\big) \left(\alpha+s u+\beta u^2\right) + \big(b_0+\dots +b_6 u^6\big) \right], \end{equation*}
whence
\begin{equation}
\label{eq:quartic-form-pre}
w^2
=
\frac{
\left(\alpha+su+\beta u^2\right)^3
+
\big(a_0+\dots +a_4u^4\big) 
\left(\alpha+s u+\beta u^2\right)
+
\big(b_0+\dots +b_6 u^6\big)}
{u}.
\end{equation}
Because $\alpha$ and $\beta$ are roots of Equations~\eqref{eq:alpha-root} and \eqref{eq:beta-root}, respectively, the numerator in \eqref{eq:quartic-form-pre} has vanishing constant term and vanishing coefficient of $u^6$. Therefore, the right-hand side is a polynomial of degree four in $u$. Equation~\eqref{eq:quartic-form-pre} defines a genus-one fibration $\varphi\colon X\rightarrow \mathbb P^1_{(s)}$ whose generic fiber is a double cover of the $u$-line branched over a quartic. The new fibration is given by
\begin{equation}
\label{eq:quartic-model}
w^2
=
A_4(s)+A_3(s)u+A_2(s)u^2+A_1(s)u^3+A_0(s)u^4,
\end{equation}
with
\begin{equation}
\begin{split}
A_4(s)
&=
(3\alpha^2+a_0)s+a_1\alpha+b_1,
\\
A_3(s)
&=
3\alpha s^2+a_1 s
+3\alpha^2\beta+a_0\beta+a_2\alpha+b_2,
\\
A_2(s)
&=
s^3+(6\alpha\beta+a_2)s+a_1\beta+a_3\alpha+b_3,
\\
A_1(s)
&=
3\beta s^2+a_3s
+3\alpha\beta^2+a_2\beta+a_4\alpha+b_4,
\\
A_0(s)
&=
(3\beta^2+a_4)s+a_3\beta+b_5.
\end{split}
\label{eq:A4}
\end{equation}
The old elliptic fiber class intersects the fibers of $\varphi$ in degree two and therefore supplies a bisection. To verify the remaining assertions, apply the binary-quartic invariants \(\mathcal I(s)\) and \(\mathcal J(s)\) to \eqref{eq:quartic-model}. 
The relative-Jacobian invariants show that the fibration has a fiber of type \(\mathrm I_4^*\) at infinity, while the fourteen simple finite roots of the discriminant give fourteen fibers of type \(\mathrm I_1\) \cite{MirandaEllipticSurfaces}.
\par For the very general member, the original Jacobian fibration has two fibers of type \(\mathrm I_0^*\), twelve fibers of type \(\mathrm I_1\), and trivial Mordell--Weil group. The Shioda--Tate formula then gives \cite{ShiodaMordellWeil}
\begin{equation*} \operatorname{NS}(X)\cong U\oplus D_4(-1)^{\oplus2}\cong U(2)\oplus D_8(-1). \end{equation*}
The bisection shows that the divisibility of the new primitive fiber class is either one or two. If it were one, then \(\varphi\) would have a section, and its fiber, zero section, and \(\mathrm I_4^*\)-components would generate the full-rank trivial lattice \(U\oplus D_8(-1)\), of discriminant \(4\), inside \(\operatorname{NS}(X)\), of discriminant \(16\). This is impossible because a finite-index inclusion of lattices satisfies
\begin{equation*} \lvert\operatorname{disc}(U\oplus D_8(-1))\rvert=[\operatorname{NS}(X):U\oplus D_8(-1)]^2\lvert\operatorname{disc}\operatorname{NS}(X)\rvert. \end{equation*}
Thus the new fiber class has divisibility two. Hence \(\varphi\) has no section for the very general member, and the old fiber class is its natural bisection.
\section{The Hashimoto--Ueda family}
\label{sec:Hashimoto-Ueda}
\subsection{Orthogonal modular forms and the modular Jacobian}
Let $L$ be an even lattice of signature $(2,n)$, let \(\Gamma\subset\mathrm O^+(L)\) be an arithmetic subgroup, and let $\mathcal{D}_L$ be a connected component of
\begin{equation*} \bigl\{[Z]\in\mathbb P(L\otimes\mathbb C):(Z,Z)=0, \ (Z,\overline Z)>0\bigr\}, \end{equation*}
and let
\begin{equation*} \mathcal A_L=\{Z\in L\otimes\mathbb C:[Z]\in\mathcal{D}_L\} \end{equation*}
be its affine cone. Fix a primitive isotropic vector $c\in L$ and the corresponding affine Siegel-domain slice
\begin{equation*} \mathcal S_c=\{Z\in\mathcal A_L:(Z,c)=1\}, \end{equation*}
and fix tube-domain coordinates \(z_1,\ldots,z_n\) arising from a chosen linear basis of \(c^\perp/c\) at a cusp on $\mathcal S_c$. Suppose that $f_0,\ldots,f_n$ are modular forms of weights $k_0,\ldots,k_n$ and characters $\chi_0,\ldots,\chi_n$. Their modular Jacobian is defined to be
\begin{equation}
\label{eq:general-modular-jacobian}
J(f_0,\ldots,f_n)
=
\det
\begin{pmatrix}
 k_0f_0&k_1f_1&\cdots&k_nf_n\\
 \partial_{z_1}f_0&\partial_{z_1}f_1&\cdots&\partial_{z_1}f_n\\
 \vdots&\vdots&&\vdots\\
 \partial_{z_n}f_0&\partial_{z_n}f_1&\cdots&\partial_{z_n}f_n
\end{pmatrix}.
\end{equation}
With the convention $f(\lambda Z)=\lambda^{-k}f(Z)$, the first row is the negative of the radial Euler derivative on $\mathcal A_L$. The sign is chosen to match the standard modular-Jacobian convention. Replacing the chosen linear tube-domain basis changes the determinant only by a nonzero constant.

\par One has the following:
\begin{proposition}
\label{prop:Jacobian-general}
The determinant \eqref{eq:general-modular-jacobian} is a modular form of weight
\begin{equation*} k_J=k_0+\cdots+k_n+n \end{equation*}
and character
\begin{equation*} \chi_J=\chi_0\cdots\chi_n\det. \end{equation*}
If all $\chi_j$ are trivial and a reflection $\sigma_r\in\Gamma$ fixes the hyperplane $r^\perp$, then $J(f_0,\ldots,f_n)$ vanishes on $r^\perp$. If, moreover, the scalar-valued ring is freely generated by the $f_j$ and the reflection criterion applies, then the modular Jacobian has a simple zero along each reflective hyperplane and generates the module of forms with determinant character.
\end{proposition}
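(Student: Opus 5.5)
The plan is to lift everything to the affine cone $\mathcal A_L$, where the weight enters only through homogeneity, and to read the modular Jacobian as a full $(n+1)\times(n+1)$ Jacobian determinant there. Concretely, I will extend each $f_j$ to a homogeneous function $F_j$ of degree $-k_j$ on $\mathcal A_L$. On the slice $\mathcal S_c$ I will use coordinates $(\lambda,z_1,\ldots,z_n)$, where $Z=\lambda Z(z)$ and $Z(z)$ is the standard tube-domain lift. Since $F_j(\lambda Z(z))=\lambda^{-k_j}f_j(z)$, the matrix $\bigl(\lambda\partial_\lambda F_j,\partial_{z_\nu}F_j\bigr)$ at $\lambda=1$ is exactly the matrix in \eqref{eq:general-modular-jacobian}, up to the sign of the first row. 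The determinant
\[
\mathcal J=\det\Bigl(\frac{\partial(F_0,\ldots,F_n)}{\partial(\lambda,z_1,\ldots,z_n)}\Bigr)
\]
is then a coordinate expression for the holomorphic $(n+1)$-form $dF_0\wedge\cdots\wedge dF_n$, divided by the volume form $d\lambda\wedge dz_1\wedge\cdots\wedge dz_n$. The weight and character will both be read off from this form.

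The first step is the weight. The form $dF_0\wedge\cdots\wedge dF_n$ is homogeneous of degree $-\sum k_j$ under $Z\mapsto tZ$. The chart volume form $\lambda^{n}\,d\lambda\wedge dz$ is the restriction of a $\mathrm O(L\otimes\mathbb C)$-equivariant holomorphic volume form on the cone, namely the residue of $dZ_0\wedge\cdots\wedge dZ_{n+1}/(Z,Z)$. This form transforms by $\det g$ under $g$ and is homogeneous of degree $n$. Dividing the two gives a function on $\mathcal A_L$ that is homogeneous of degree $-(\sum k_j+n)$ and transforms under $\gamma\in\Gamma$ by $\chi_0\cdots\chi_n\det(\gamma)$. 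This proves the first assertion. The main technical care is in checking that the residue form really has degree $n$ and character $\det$, and in tracking the extra factor $\lambda^n$ from the slice; I will do this explicitly in the standard tube coordinates $Z(z)=(-(z,z)/2,\,z,\,1)$. Holomorphy at the cusps follows from Koecher's principle for $n\ge3$, or directly from the Fourier expansions.

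The second step is the vanishing along mirrors. Suppose all $\chi_j$ are trivial and $\sigma_r\in\Gamma$. Each $F_j$ is then $\sigma_r$-invariant, so $dF_j$ at a point of $r^\perp$ annihilates the $-1$-eigendirection $r$. Hence $dF_0\wedge\cdots\wedge dF_n$ vanishes there. Equivalently, $\mathcal J$ is anti-invariant under $\sigma_r$ because $\det\sigma_r=-1$, so it vanishes on the fixed hyperplane.

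The third step treats the free case. If the $f_j$ freely generate the ring, they are algebraically independent, so $\mathcal J\not\equiv0$. The plan is to follow the standard argument of Wang--Williams. Any form $g$ with character $\det$ is anti-invariant under every reflection in $\Gamma$, so it vanishes on all mirrors. If the reflection criterion applies, meaning $\Gamma$ is generated by reflections up to $\pm1$ and the quotient map is branched exactly along the mirrors, then $\mathcal J$ has only simple zeros there. These simple zeros come from the Jacobian criterion for the finite map $\mathcal A_L\to\mathbb C^{n+1}$ given by $(F_j)$, whose ramification locus is precisely the union of mirrors, each with index two. It follows that $g/\mathcal J$ is holomorphic and $\Gamma$-invariant, hence a polynomial in the $f_j$.

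I expect this last step to be the main obstacle. The statement leaves the precise hypotheses of the reflection criterion somewhat implicit, so I would invoke \cite{WangWilliamsJacobian} for the exact ramification statement rather than reprove it from scratch.
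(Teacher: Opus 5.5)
Your proof is correct, but for the weight and character it takes a different route from the paper.

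\textbf{Weight and character.} The paper stays in tube-domain coordinates. It differentiates $f_j(\gamma z)=\chi_j(\gamma)j(\gamma,z)^{k_j}f_j(z)$ and removes the terms involving derivatives of $j(\gamma,z)$ by row operations against the first row $(k_jf_j)$. It then quotes the tangent-Jacobian identity for the type-IV tube domain to produce the remaining factor $j(\gamma,z)^{n}$ and the character $\det$.

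You instead pass to the affine cone and divide $dF_0\wedge\cdots\wedge dF_n$ by the residue of $dZ_0\wedge\cdots\wedge dZ_{n+1}/(Z,Z)$. Its homogeneity degree $n$ and character $\det$ are exactly the content of the identity the paper takes as known. So your version is more self-contained and explains where the shift by $n$ and the determinant come from. The paper's version is shorter, given that identity.

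\textbf{Mirrors and the free case.} Your anti-invariance argument is the paper's argument (oddness in a normal coordinate). Your cotangent argument is an equivalent reformulation: $r\in T_Z\mathcal A_L$ because $(Z,r)=0$, so every $dF_j(Z)$ lies in the $n$-dimensional annihilator of $r$. For the free-algebra assertions, both you and the paper defer to Wang--Williams.

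\textbf{A slip in the explicit factor.} Fix this when you do the coordinate check you promised. In the chart $Z=\lambda Z(z)$ with $Z(z)=(-(z,z)/2,z,1)$, one has $\partial_{Z_0}(Z,Z)=2\lambda$, and for $w=\lambda z$,
$dw_1\wedge\cdots\wedge dw_n\wedge d\lambda=\lambda^{n}\,dz_1\wedge\cdots\wedge dz_n\wedge d\lambda$.
Hence the residue form is $\pm\tfrac12\lambda^{n-1}\,d\lambda\wedge dz_1\wedge\cdots\wedge dz_n$, not $\lambda^{n}\,d\lambda\wedge dz$. The form you wrote is homogeneous of degree $n+1$, which contradicts your own (correct) claim of degree $n$.

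Since $\mathcal J(\lambda Z(z))=-\lambda^{-1-\sum k_j}J(z)$, the homogeneous function of degree $-(k_0+\cdots+k_n+n)$ is $\lambda^{1-n}\mathcal J$. Dividing by $\lambda^{n}$ instead would give a weight one too large. Your abstract degree count is unaffected; only the stated coordinate expression needs correcting.
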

\begin{proof}
Let \(j(\gamma,z)\) be the orthogonal automorphy factor. The transformation law follows by differentiating
\begin{equation*} f_j(\gamma z)=\chi_j(\gamma)j(\gamma,z)^{k_j}f_j(z). \end{equation*}
In each derivative row, the terms involving derivatives of \(j(\gamma,z)\) are multiples of the first row and are removed by elementary row operations. Taking determinants, and using the standard tangent-Jacobian identity for a type-IV tube domain, gives the factor \(j(\gamma,z)^{k_0+\cdots+k_n+n}\), the product \(\chi_0\cdots\chi_n\), and the determinant character. This proves the stated weight and character. A reflection reverses the normal direction to its fixed hyperplane, and therefore the Jacobian is odd in a local normal coordinate. The final assertions are the standard modular-Jacobian reflection criterion; see \cite{WangWilliamsJacobian,WangClassificationFree}.
\end{proof}
\subsection{The Weierstrass equation}
Consider the family of \(U\oplus E_8(-1)\)-polarized K3 surfaces. The very general member admits, up to automorphism, a unique Jacobian elliptic fibration with singular fibers \(\mathrm{II}^*+14\mathrm I_1\) \cite{HashimotoUeda,ClingherMalmendierWilliams2026}. A Weierstrass model over \(\mathbb{P}^1_{(t)}\) can be normalized as
\begin{equation}
\label{eq:Hashimoto-Ueda-from-Vinberg}
y^2=x^3+f(t)x+g(t)
\end{equation}
with
\begin{equation}
\label{eq:HU-polynomial-shapes}
\begin{aligned}
f(t)
 &=C_4t^4+C_{10}t^3+C_{16}t^2+C_{22}t+C_{28},
\\
g(t)
 &=t^7+C_{12}t^5+C_{18}t^4+C_{24}t^3
   +C_{30}t^2+C_{36}t+C_{42}.
\end{aligned}
\end{equation}
This family was investigated by Hashimoto and Ueda in \cite{HashimotoUeda}. The coefficients \(C_n\) are assigned weight \(n\), and \(t\) is assigned weight \(6\). With these conventions every monomial in \(f\) has weight \(28\), while every monomial in \(g\) has weight \(42\). Consequently, the discriminant polynomial
\begin{equation*} \Delta(t):=4f(t)^3+27g(t)^2 \end{equation*}
is weighted homogeneous of total weight \(84\). Since \(\Delta(t)\) has degree \(14\) in \(t\), we may write
\begin{equation*} \Delta(t)=27\prod_{i=1}^{14}(t-r_i). \end{equation*}
Under the scaling $C_n\longmapsto \lambda^n C_n$, one has $t\longmapsto \lambda^6 t$, $r_i\longmapsto \lambda^6 r_i$. Therefore,
\begin{equation*} \operatorname{Disc}_t(\Delta) = 27^{26}\prod_{i<j}(r_i-r_j)^2, \end{equation*}
has weight \(1092\). In turn, the resultant \(\operatorname{Res}_t(f,g)\) has weight \(196\), as seen from the Sylvester determinant. Geometrically, the divisor defined by \(\operatorname{Disc}_t(\Delta)\) has two irreducible components: one corresponds to the collision of two nodal fibers, producing an additional \(A_1\)-singularity, and the other to the simultaneous vanishing of \(f\) and \(g\), producing a \(\mathrm{II}\)-fiber. Thus one writes
\begin{equation*} \operatorname{Disc}_t(\Delta)=\operatorname{Res}_t(f,g)^3\, \Psi_{504}, \end{equation*}
where \(\Psi_{504}\) is the equation of the divisor parameterizing surfaces acquiring an additional \(A_1\)-singularity; its weight is \(1092-3\cdot196=504\) \cite[Section~3]{HashimotoUeda}.
\par Hashimoto and Ueda proved in \cite{HashimotoUeda} that the scalar-valued ring of orthogonal modular forms for $\Gamma:=\mathrm O^+(U^{\oplus2}\oplus E_8(-1))$ is
\begin{equation}
\label{eq:HU-scalar-ring}
M_*(\Gamma)
=
\mathbb C[C_4,C_{10},C_{12},C_{16},C_{18},C_{22},C_{24},C_{28},
C_{30},C_{36},C_{42}],
\end{equation}
and that the full ring with characters is obtained by adjoining a determinant-character generator $\Phi_{252}$ satisfying one relation of weight $504$; namely $\Phi_{252}^2=\Psi_{504}$; see also \cite{FreitagSalvatiManni,HashimotoUeda,YoshikawaII}. Applying Proposition~\ref{prop:Jacobian-general} to the eleven scalar generators, we obtain the following.
\begin{theorem}
\label{thm:HU-Jacobian}
The modular Jacobian $J=J(C_4,\ldots,C_{42})$ of the Hashimoto--Ueda generators has weight $252$ and determinant character. There is a constant $\kappa\in\mathbb C^\times$ such that $J=\kappa\Phi_{252}$. The scalar equation of the reflective branch divisor is $\Psi_{504}=\Phi_{252}^2=0$.
\end{theorem}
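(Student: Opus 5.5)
The plan is to apply Proposition~\ref{prop:Jacobian-general} to the eleven free generators $C_4,\ldots,C_{42}$ and then compare divisors with $\Phi_{252}$. Here $n=10$, and the weights sum to $242$. Since all characters are trivial for $\Gamma=\mathrm O^+(U^{\oplus2}\oplus E_8(-1))$, the proposition immediately gives that $J$ has weight $242+10=252$ and determinant character. Next I must rule out $J\equiv0$. The generators are algebraically independent by \eqref{eq:HU-scalar-ring}, so they define a dominant map from the ten-dimensional affine cone $\mathcal A_L$ to the eleven-dimensional weighted coefficient space. The Euler identity shows that the first row of \eqref{eq:general-modular-jacobian} is the radial derivative. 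The determinant is therefore, up to sign, the full $11\times11$ Jacobian of the map $Z\mapsto(C_k(Z))$ on the cone. This Jacobian is generically nonzero because the map is generically finite, a consequence of algebraic independence and the cone having dimension eleven.

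To identify $J$ with $\Phi_{252}$, I use the structure of the full ring. It is $M_*(\Gamma)[\Phi_{252}]/(\Phi_{252}^2-\Psi_{504})$, so every form with determinant character lies in $\Phi_{252}\cdot M_*(\Gamma)$. Hence $J=\Phi_{252}\,P$ for some scalar form $P$ of weight $0$, which is a constant $\kappa$. Nonvanishing of $J$ then gives $\kappa\neq0$. This avoids any divisor computation. For robustness, I would also check the consistency with the reflection criterion of Proposition~\ref{prop:Jacobian-general}. $J$ must vanish on every mirror $r^\perp$ of a $(-2)$-reflection in $\Gamma$, and the union of these mirrors maps to the locus $\Psi_{504}=0$ of an extra $A_1$. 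That locus is the branch divisor of the quotient map, since $-2$ reflections fix their mirrors pointwise.

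The last claim follows from $\Phi_{252}^2=\Psi_{504}$, so $\Psi_{504}=\kappa^{-2}J^2$ vanishes exactly on the reflective ramification divisor. The step I expect to need most care is the structural input that the determinant-character module is free of rank one over $M_*(\Gamma)$, generated by $\Phi_{252}$. I would take this from Hashimoto--Ueda as quoted. If it were unavailable, the fallback is the Wang--Williams criterion: show that $J$ vanishes to order exactly one on each mirror, using that $\operatorname{wt}(J)=252$ equals half the weight of the reflective Borcherds product. Then $J/\Phi_{252}$ is a holomorphic, nowhere vanishing weight-zero form, and Koecher's principle makes it constant.
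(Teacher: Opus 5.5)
Your proposal is correct and follows the paper's own proof. You get the weight and character from Proposition~\ref{prop:Jacobian-general}, nonvanishing from algebraic independence, and $J\in\Phi_{252}M_0(\Gamma)=\mathbb C\,\Phi_{252}$ from the Hashimoto--Ueda structure of the full ring, then square to obtain the branch equation. (Two minor slips: the affine cone $\mathcal A_L$ is eleven-dimensional, not ten; and in your fallback, $252$ is the weight of the reflective product $\Phi_{252}$ itself, not half of it.)
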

\begin{proof}
The weight and character follow from Proposition~\ref{prop:Jacobian-general}. The forms in \eqref{eq:HU-scalar-ring} are algebraically independent, so their modular Jacobian is nonzero. Hashimoto--Ueda's description of the full graded ring shows that the determinant-character module in weight $252$ is one-dimensional and generated by $\Phi_{252}$. Hence $J=\kappa\Phi_{252}$ for some $\kappa\neq0$, and squaring gives the stated branch equation.
\end{proof}
\section{Relation with coefficients in Vinberg's scroll model}
\label{App:VinbergCoefficients}
Expanding \eqref{eq:cubic-translation-identity} gives the following explicit polynomial change of parameters from Vinberg's coefficients to those of \eqref{eq:H+D8-weierstrass}:
\begin{equation}
\label{eq:vinberg-to-FGH}
\begin{aligned}
F_4&=-\frac32a_2,
&
F_6&=-\frac32f_2,
\\[1mm]
G_8&=f_1+\frac34a_2^2,
&
G_{10}&=a_1+\frac32a_2f_2,
&
G_{12}&=a+\frac34f_2^2,
\\[1mm]
H_8&=b_2-\frac14a_2^2,
&
H_{10}&=g_2-\frac12a_2f_2,
\\[1mm]
H_{12}&=h-\frac12a_2f_1-\frac14f_2^2-\frac18a_2^3,
\\
H_{14}&=g_1-\frac12(a_2a_1+f_2f_1)-\frac38a_2^2f_2,
\\
H_{16}&=b_1-\frac12(a_2a+f_2a_1)-\frac38a_2f_2^2,
\\
H_{18}&=b-\frac12f_2a-\frac18f_2^3.
\end{aligned}
\end{equation}
The change of parameters is triangular.  Its inverse is
\begin{equation}
\label{eq:FGH-to-vinberg}
\begin{aligned}
a_2&=-\frac23F_4,
&
f_2&=-\frac23F_6,
\\[1mm]
f_1&=G_8-\frac13F_4^2,
&
a_1&=G_{10}-\frac23F_4F_6,
&
a&=G_{12}-\frac13F_6^2,
\\[1mm]
b_2&=H_8+\frac19F_4^2,
&
g_2&=H_{10}+\frac29F_4F_6,
\\[1mm]
h&=H_{12}-\frac13F_4G_8+\frac19F_6^2
       +\frac{2}{27}F_4^3,
\\
g_1&=H_{14}-\frac13(F_6G_8+F_4G_{10})
       +\frac29F_4^2F_6,
\\
b_1&=H_{16}-\frac13(F_6G_{10}+F_4G_{12})
       +\frac29F_4F_6^2,
\\
b&=H_{18}-\frac13F_6G_{12}+\frac{2}{27}F_6^3.
\end{aligned}
\end{equation}
\section{Coefficient dictionary for the exceptional
\texorpdfstring{$E_7$}{E7} model}
\label{app:E7-coefficient-dictionary}
We expand Equations~\eqref{eq:E7-BC-from-invariants}
and \eqref{eq:E7-polynomial-shapes} in the coefficients of the
$A_7$ model.  We obtain:
\begin{align}
P_4={}&f_4-4g_4,
&
P_6={}&f_6+2g_6,
\label{eq:E7-dictionary-low}\\
P_{10}={}&f_4g_6-4f_6g_4-4h_5^2+g_{10},
\nonumber\\
P_{12}={}&f_4^2g_4-\frac13f_4g_8+2f_6g_6
 -\frac83g_4g_8+g_6^2+4h_5h_7+g_{12},
\nonumber\\
P_{14}={}&h_7^2-4h_5h_9,
\nonumber\\
P_{16}={}&2f_4h_5h_7-4f_6h_5^2-4g_4g_{12}
 +g_6g_{10}-\frac13g_8^2+2h_7h_9,
\nonumber\\
P_{18}={}&f_4^2h_5^2+2f_4g_4g_{10}
 -\frac13f_4g_6g_8-2f_4h_5h_9
 -\frac83f_6g_4g_8+f_6g_6^2
\nonumber\\
&+4f_6h_5h_7-4g_4h_7^2+4g_6h_5h_7
 -\frac83g_8h_5^2+2g_6g_{12}
 -\frac13g_8g_{10}+h_9^2,
\nonumber\\
P_{22}={}&-4g_4h_9^2+2g_6h_7h_9
 -\frac43g_8h_5h_9-\frac23g_8h_7^2
 +2g_{10}h_5h_7-4g_{12}h_5^2.
\label{eq:E7-dictionary-middle}
\end{align}
The two highest-weight coefficients are
\begin{align}
P_{24}={}&4f_4g_4h_7h_9-2f_4g_6h_5h_9
 -\frac23f_4g_8h_5h_7+2f_4g_{10}h_5^2
 -4f_6g_4h_7^2
\nonumber\\
&+4f_6g_6h_5h_7-\frac83f_6g_8h_5^2
 -\frac83g_4g_8g_{12}+g_4g_{10}^2+g_6^2g_{12}
\nonumber\\
&-\frac13g_6g_8g_{10}+2g_6h_9^2
 +\frac2{27}g_8^3-\frac23g_8h_7h_9
 -2g_{10}h_5h_9+4g_{12}h_5h_7,
\label{eq:E7-dictionary-P24}\\
P_{30}={}&-\frac83g_4g_8h_9^2+4g_4g_{10}h_7h_9
 -4g_4g_{12}h_7^2+g_6^2h_9^2
 -\frac23g_6g_8h_7h_9
\nonumber\\
&-2g_6g_{10}h_5h_9+4g_6g_{12}h_5h_7
 +\frac89g_8^2h_5h_9+\frac19g_8^2h_7^2
\nonumber\\
&-\frac23g_8g_{10}h_5h_7-\frac83g_8g_{12}h_5^2
 +g_{10}^2h_5^2.
\label{eq:E7-dictionary-P30}
\end{align}
These identities provide a direct coefficient-level check of the
weight decomposition in Equation~\eqref{eq:E7-universal-model}.  Under
the specialization \eqref{eq:A5A1-to-A7-specialization}, coefficients factor as in
Equation~\eqref{eq:source-proof-rank-one-relation}.
\section{Selected formulas for the coefficient map 
\texorpdfstring{$\Theta$}{Theta }}
\label{app:C-formulas}
We give selected exact relations for the geometric coefficient map
$\Theta$ in Proposition~\ref{prop:polynomial_map}. Computing the coefficients
$C_k$ from the binary-quartic invariants yields
\begin{align}
C_{16}
={}&
\frac{6}{49}F_{6}^{2}F_{4}
+\frac{96}{49}F_{6}^{2}H_{4}
-\frac{3}{7}F_{6}G_{10}
-\frac{16}{7}F_{6}H_{10}
-7G_{12}H_{4}+H_{16}
\nonumber\\
&
-\frac{1}{3}F_{4}^{2}H_{4}^{2}
-\frac{1}{3}F_{4}H_{4}G_{8}
+H_{4}^{2}G_{8}
+F_{4}H_{12}
+3H_{4}H_{12}
-\frac{1}{3}G_{8}^{2},
\label{eq:appendix-C16}
\end{align}
\begin{align}
C_{22}
={}&
-\frac{4}{3}F_{4}H_{4}H_{14}
+\frac{8}{7}F_{6}H_{4}H_{12}
-\frac{1}{3}F_{4}G_{10}H_{4}^{2}
\nonumber\\
&
+\frac{1}{3}G_{8}G_{10}H_{4}
+\frac{2}{3}F_{4}F_{6}H_{4}^{3}
+\frac{22}{21}F_{6}G_{8}H_{4}^{2}
\nonumber\\
&
-G_{10}H_{4}^{3}
+G_{10}H_{12}
-2H_{4}^{2}H_{14}
-\frac{2}{3}G_{8}H_{14}
\nonumber\\
&
+\frac{3}{49}F_{6}^{2}G_{10}
-\frac{4}{343}F_{6}^{3}F_{4}
+2F_{6}G_{12}H_{4}
\nonumber\\
&
+\frac{2}{21}F_{6}F_{4}^{2}H_{4}^{2}
-\frac{2}{7}F_{6}F_{4}H_{12}
-\frac{106}{343}F_{6}^{3}H_{4}
\nonumber\\
&
+\frac{44}{49}F_{6}^{2}H_{10}
-\frac{2}{7}F_{6}H_{16}
+\frac{2}{21}F_{6}G_{8}^{2}
\nonumber\\
&
-4G_{12}H_{10}
-8H_{18}H_{4}
+\frac{2}{21}F_{6}F_{4}H_{4}G_{8}.
\label{eq:appendix-C22}
\end{align}
\begin{align}
C_{28}
={}&
-\frac{1}{7}F_{6}G_{10}H_{12}
+\frac{2}{21}F_{6}G_{8}H_{14}
+F_{4}G_{12}H_{4}^{3}
\nonumber\\
&
+F_{4}H_{4}^{2}H_{16}
+G_{8}G_{12}H_{4}^{2}
+G_{8}H_{4}H_{16}
\nonumber\\
&
+G_{12}H_{4}H_{12}
+G_{12}H_{4}^{4}
+H_{4}^{3}H_{16}
+H_{12}H_{16}
\nonumber\\
&
-\frac{1}{3}H_{14}^{2}
+\frac{1}{49}F_{6}^{2}F_{4}H_{12}
-\frac{11}{49}F_{6}^{2}H_{4}H_{12}
\nonumber\\
&
+\frac{4}{7}F_{6}G_{12}H_{10}
-\frac{11}{21}F_{6}G_{10}H_{4}^{3}
-\frac{8}{21}F_{6}H_{4}^{2}H_{14}
\nonumber\\
&
-\frac{2}{3}G_{10}H_{4}H_{14}
-\frac{1}{7}F_{6}^{2}G_{12}H_{4}
-\frac{1}{147}F_{6}^{2}F_{4}^{2}H_{4}^{2}
\nonumber\\
&
-\frac{25}{147}F_{6}^{2}H_{4}^{2}G_{8}
-\frac{1}{3}G_{10}^{2}H_{4}^{2}
-\frac{1}{147}F_{6}^{2}F_{4}H_{4}G_{8}
\nonumber\\
&
+\frac{1}{21}F_{6}F_{4}G_{10}H_{4}^{2}
-\frac{1}{21}F_{6}G_{8}G_{10}H_{4}
+\frac{4}{21}F_{6}F_{4}H_{4}H_{14}
\nonumber\\
&
+\frac{8}{7}F_{6}H_{18}H_{4}
-\frac{2}{21}F_{4}F_{6}^{2}H_{4}^{3}
-4H_{18}H_{10}
\nonumber\\
&
-\frac{1}{3}F_{6}^{2}H_{4}^{4}
+\frac{1}{2401}F_{6}^{4}F_{4}
-\frac{1}{343}F_{6}^{3}G_{10}
\nonumber\\
&
+\frac{37}{2401}F_{6}^{4}H_{4}
+\frac{1}{49}F_{6}^{2}H_{16}
-\frac{24}{343}F_{6}^{3}H_{10}
\nonumber\\
&
-\frac{1}{147}F_{6}^{2}G_{8}^{2}.
\label{eq:appendix-C28}
\end{align}
\section{Eisenstein expressions for the Hashimoto--Ueda generators}
\label{app:Eisenstein-generator-identities}
We give the exact relations for the generators used in
Proposition~\ref{thm:Sakai-comparison}.  Throughout this appendix,
$\mathcal E_k$ denotes the cusp-normalized orthogonal Eisenstein series
of \eqref{eq:Sakai-versus-normalized-Eisenstein}.  These are not
Sakai's series $\mathcal E_k^{\mathrm S}$; the two normalizations differ
by the Bernoulli factor in that equation.  Direct computation gives
\begin{align}
C_4={}&-3\mathcal E_4,\nonumber\\
C_{10}={}&\frac{24}{7}\mathcal E_{10},
\nonumber\\
C_{12}={}&-\frac{4146}{875}\mathcal E_{12}
+\frac{378}{125}\mathcal E_4^3,
\label{eq:Eisenstein-C4-C22}\\
C_{16}={}&-\frac{22494123}{1990625}\mathcal E_{16}
+\frac{3756276}{284375}\mathcal E_4\mathcal E_{12}
-\frac{10557}{3125}\mathcal E_4^4,
\nonumber\\
C_{18}={}&\frac{175468}{13475}\mathcal E_{18}
-\frac{3132}{275}\mathcal E_4^2\mathcal E_{10},
\nonumber\\
C_{22}={}&\frac{245355851592}{6407025625}\mathcal E_{22}
-\frac{528963264}{48173125}\mathcal E_{10}\mathcal E_{12}
-\frac{469552368}{10058125}\mathcal E_4\mathcal E_{18}
+\frac{10401912}{529375}\mathcal E_4^3\mathcal E_{10}.
\nonumber
\end{align}
In weights $24$, $28$, and $30$, one has
\begin{align}
C_{24}={}&
-\frac{247630621761606}{5887382125625}\mathcal E_{24}
+\frac{88284326976}{10221859375}\mathcal E_{12}^2
+\frac{6061824}{468391}\mathcal E_4\mathcal E_{10}^2
\nonumber\\
&+\frac{119646240237}{2673409375}\mathcal E_4^2\mathcal E_{16}
-\frac{3193630632}{112328125}\mathcal E_4^3\mathcal E_{12}
+\frac{4270941}{1234375}\mathcal E_4^6,
\label{eq:Eisenstein-C24}\\[3pt]
C_{28}={}&
-\frac{28869530234706049572}{212555734025009375}\mathcal E_{28}
+\frac{47840342617727064}{1650995262109375}\mathcal E_{12}\mathcal E_{16}
+\frac{3191204229888}{84553024325}\mathcal E_{10}\mathcal E_{18}
\nonumber\\
&+\frac{4554670026061219158}{25250981936805625}\mathcal E_4\mathcal E_{24}
-\frac{21493441243002816}{1096038871484375}\mathcal E_4\mathcal E_{12}^2
-\frac{258000273792}{7174746425}\mathcal E_4^2\mathcal E_{10}^2
\nonumber\\
&-\frac{134448206960657241}{1433281601171875}\mathcal E_4^3\mathcal E_{16}
+\frac{70065352514484}{1720626171875}\mathcal E_4^4\mathcal E_{12}
-\frac{2446356636099}{926491015625}\mathcal E_4^7,
\label{eq:Eisenstein-C28}\\[3pt]
C_{30}={}&
\frac{27209419519503579144}{186144949274659375}\mathcal E_{30}
-\frac{25369042718016}{562070884375}\mathcal E_{12}\mathcal E_{18}
-\frac{392398848}{88110869}\mathcal E_{10}^3
\nonumber\\
&-\frac{1899223793136}{20116221125}\mathcal E_4\mathcal E_{10}\mathcal E_{16}
-\frac{396004344469488}{2187752826875}\mathcal E_4^2\mathcal E_{22}
+\frac{4032113824512}{46487065625}\mathcal E_4^2\mathcal E_{10}\mathcal E_{12}
\nonumber\\
&+\frac{8765776266336}{67942634375}\mathcal E_4^3\mathcal E_{18}
-\frac{18776847816}{510846875}\mathcal E_4^5\mathcal E_{10}.
\label{eq:Eisenstein-C30}
\end{align}
The two longest identities are
\begin{align}
C_{36}={}&
-\frac{10966101442664558639832373507186}
 {20664063689028667667525390625}\mathcal E_{36}
+\frac{711906653618754304}{11152800682340625}\mathcal E_{18}^2
\nonumber\\
&+\frac{9952374401195974480016}{74473435858305859375}
 \mathcal E_{12}\mathcal E_{24}
-\frac{16025286253876891776}{2198153360634765625}\mathcal E_{12}^3
\nonumber\\
&+\frac{173837469469737984}{3793119345640625}
 \mathcal E_{10}^2\mathcal E_{16}
+\frac{17805779546323923071901}{108248886284423828125}
 \mathcal E_4\mathcal E_{16}^2
\nonumber\\
&+\frac{386071447810308586944}{1072560276146734375}
 \mathcal E_4\mathcal E_{10}\mathcal E_{22}
-\frac{442884732896131584}{5539866741784375}
 \mathcal E_4\mathcal E_{10}^2\mathcal E_{12}
\nonumber\\
&+\frac{316115129874677291208708972}{416187922859075945703125}
 \mathcal E_4^2\mathcal E_{28}
-\frac{90605903434364369268733632}{308219146123005126953125}
 \mathcal E_4^2\mathcal E_{12}\mathcal E_{16}
\nonumber\\
&-\frac{830577945310736931456}{2197903021688171875}
 \mathcal E_4^2\mathcal E_{10}\mathcal E_{18}
-\frac{36320037840362329936000044}{59671226689413792578125}
 \mathcal E_4^3\mathcal E_{24}
\nonumber\\
&+\frac{271723776565975828624176}{2590076858176513671875}
 \mathcal E_4^3\mathcal E_{12}^2
+\frac{169076009521112340096}{1305521343709515625}
 \mathcal E_4^4\mathcal E_{10}^2
\nonumber\\
&+\frac{751395218162920164090549}{3387023583769287109375}
 \mathcal E_4^5\mathcal E_{16}
-\frac{135095208176303563806}{1498020160888671875}
 \mathcal E_4^6\mathcal E_{12}
\nonumber\\
&+\frac{1582982591047959024}{312773440185546875}\mathcal E_4^9,
\label{eq:Eisenstein-C36}
\end{align}
and
\begin{align}
C_{42}={}&
\frac{937658283184177005411726687689439004}
 {472563301052781894820381313671875}\mathcal E_{42}
\nonumber\\
&
-\frac{174377132156027221342821058336}
 {433495455925792705523203125}\mathcal E_{18}\mathcal E_{24}
\nonumber\\
&-\frac{524243665969361758958944518286208}
 {1258499497774101443340558203125}\mathcal E_{12}\mathcal E_{30}
\nonumber\\
&
+\frac{509119589590898191681843968}
 {9684077944495913161328125}\mathcal E_{12}^2\mathcal E_{18}
\nonumber\\
&-\frac{255976422600718744575572808}
 {1690659567345019755859375}\mathcal E_{10}\mathcal E_{16}^2
\nonumber\\
&
-\frac{253979387411191905980928}
 {1522866217812128564375}\mathcal E_{10}^2\mathcal E_{22}
\nonumber\\
&+\frac{2150945991433083703296}{96081458005145738125}
 \mathcal E_{10}^3\mathcal E_{12}
\nonumber\\
&
-\frac{579685483277498303643935181348}
 {478058855307500586259765625}\mathcal E_4\mathcal E_{16}\mathcal E_{22}
\nonumber\\
&-\frac{9342416050138099539846158310096}
 {6500132405485958107978484375}\mathcal E_4\mathcal E_{10}\mathcal E_{28}
\nonumber\\
&
+\frac{9295217211099331001609272699968}
 {18480648441128142380439453125}
 \mathcal E_4\mathcal E_{10}\mathcal E_{12}\mathcal E_{16}
\nonumber\\
&+\frac{84336513837366609025425408}
 {246533341350932314188125}\mathcal E_4\mathcal E_{10}^2\mathcal E_{18}
\nonumber\\
&
+\frac{10766383697512235671478541672}
 {18044266397177518544921875}\mathcal E_4^2\mathcal E_{16}\mathcal E_{18}
\nonumber\\
&+\frac{411133600292986721437818995664}
 {775339151885274060068359375}\mathcal E_4^2\mathcal E_{12}\mathcal E_{22}
\nonumber\\
&
+\frac{725349075991254266821519329528}
 {931960907501616498240109375}\mathcal E_4^2\mathcal E_{10}\mathcal E_{24}
\nonumber\\
&-\frac{633175334098004178493948224768}
 {3435226416115584113069921875}\mathcal E_4^2\mathcal E_{10}\mathcal E_{12}^2
\nonumber\\
&
-\frac{814796153444743328014789939632}
 {446118219700142305331640625}\mathcal E_4^3\mathcal E_{30}
\nonumber\\
&-\frac{1545660921032085339748204896}
 {29265070711388748564453125}\mathcal E_4^3\mathcal E_{12}\mathcal E_{18}
\nonumber\\
&
-\frac{123243972544099392334848}
 {2956623058517985250625}\mathcal E_4^3\mathcal E_{10}^3
\nonumber\\
&+\frac{73492865380951694535471199362576}
 {112305478988394096004208984375}\mathcal E_4^4\mathcal E_{10}\mathcal E_{16}
\nonumber\\
&
+\frac{1999656919995936407243390016}
 {1217172922896819560546875}\mathcal E_4^5\mathcal E_{22}
\nonumber\\
&-\frac{5620222544686734151056072672}
 {12256409362478892939453125}\mathcal E_4^5\mathcal E_{10}\mathcal E_{12}
\nonumber\\
&
-\frac{303046910652213425086168068}
 {321594183641634599609375}\mathcal E_4^6\mathcal E_{18}
\nonumber\\
&+\frac{275930723848426493011104564}
 {1481543988871074970703125}\mathcal E_4^8\mathcal E_{10}.
\label{eq:Eisenstein-C42}
\end{align}
Each monomial in these equations has the indicated total weight.  After the normalization change \eqref{eq:Sakai-versus-normalized-Eisenstein}, we find that the identities agree term by term with Sakai's Appendix~A.
\bibliographystyle{amsplain}
\bibliography{vinberg_to_hashimoto_ueda}
\end{document}